\def\dom{{\rm dom \,}}
\def\beq{\begin{equation}}
\def\eeq{\end{equation}}
\def\E{\mathbb{E}}
\def\R{\mathbb{R}}
\newtheorem{assumption}{Assumption}
\newtheorem{example}{Example}
\def\ba{\begin{array}}
\def\ea{\end{array}}
\def\beann{\begin{eqnarray*}}
\def\eeann{\end{eqnarray*}}
\def\bea{\begin{eqnarray}}
\def\eea{\end{eqnarray}}
\def\BT{\begin{theorem}}
\def\ET{\end{theorem}}
\def\BL{\begin{lemma}}
\def\EL{\end{lemma}}
\def\BC{\begin{corollary}}
\def\EC{\end{corollary}}
\def\BE{\begin{example}}
\def\EE{\end{example}}
\def\BD{\begin{definition}}
\def\ED{\end{definition}}
\def\BR{\begin{remark}}
\def\ER{\end{remark}}
\def\BAS{\begin{assumption}}
\def\EAS{\end{assumption}}
\def\BI{\begin{itemize}}
\def\EI{\end{itemize}}
\def\BMP{\begin{minipage}{9.5cm}}
\def\EMP{\end{minipage}}
\def\MPT{\begin{minipage}{11.5cm}}
\def\EPT{\end{minipage}}
\def\la{\langle}
\def\ra{\rangle}
\theoremstyle{plain}
\newtheorem{theorem}{Theorem}[section]
\newtheorem{corollary}[theorem]{Corollary}
\newtheorem{lemma}[theorem]{Lemma}
\theoremstyle{definition}
\newtheorem{definition}{Definition}
\theoremstyle{remark}
\newtheorem{remark}{Remark}
\begin{document}

\jvol{36} \jnum{1}\jyear{2021}

%\articletype{GUIDE}

\title{On Inexact Solution of Auxiliary Problems in Tensor Methods for Convex Optimization}

\author{
\name{G.N. Grapiglia\textsuperscript{a}$^{\ast}$\thanks{$^\ast$Corresponding author. Email: grapiglia@ufpr.br}, Yu. Nesterov\textsuperscript{b}}
\affil{\textsuperscript{a}Departamento de Matem\'atica, Universidade Federal do Paran\'a, Centro Polit\'ecnico, Cx. postal 19.081, 81531-980, Curitiba, Paran\'a, Brazil;\\
\textsuperscript{b}Center for Operations Research and Econometrics (CORE), Catholic University of Louvain
(UCL), 34 voie du Roman Pays, 1348 Louvain-la-Neuve, Belgium}
\received{November 22, 2019}
}

\maketitle

\begin{abstract}
In this paper we study the auxiliary problems that appear in $p$-order tensor methods for unconstrained minimization of convex functions with $\nu$-H\"{o}lder continuous $p$th derivatives. This type of auxiliary problems corresponds to the minimization of a $(p+\nu)$-order regularization of the $p$th order Taylor approximation of the objective. For the case $p=3$, we consider the use of Gradient Methods with Bregman distance. When the regularization parameter is sufficiently large, we prove that the referred methods take at most $\mathcal{O}(\log(\epsilon^{-1}))$ iterations to find either a suitable approximate stationary point of the tensor model or an $\epsilon$-approximate stationary point of the original objective function.
\end{abstract}

\begin{keywords}
unconstrained minimization; high-order methods; tensor methods; H\"older condition; worst-case global complexity
bounds
\end{keywords}

\begin{classcode}49M15; 49M37; 58C15; 90C25; 90C30
\end{classcode}

\section{Introduction}
\setcounter{equation}{0}

\subsection{Motivation}

In \cite{NP}, a cubic regularization of Newton's method (CNM) was proposed for convex and nonconvex minimization of functions with Lipschitz continuous Hessian. At each iteration of CNM a trial point is computed by minimizing a third-order regularization of the second-order Taylor approximation of the objective function around the current iterate. When the objective $f$ is convex, it was shown that CNM takes at most $\mathcal{O}(\epsilon^{-1/2})$ iterations to generate $\bar{x}$ such that $f(\bar{x})-f_{*}\leq\epsilon$, where $f_{*}$ is the optimal value of $f$. An accelerated version of CNM was proposed in \cite{NES3} with an improved complexity bound of $\mathcal{O}(\epsilon^{-1/3})$. In the sequel, accelerated $p$-order tensor methods with complexity of $\mathcal{O}(\epsilon^{-1/(p+1)})$ were proposed by Baes \cite{BAES}, generalizing the accelerated CNM. However, each iteration of these tensor methods require the exact minimization of a potentially nonconvex model, namely, a $(p+1)$-order regularization of the $p$th order Taylor approximation of the objective. Since the global minimization of general nonconvex multivariate polynomials is computationally out of reach, the contribution in \cite{BAES} remained restricted to the theoretical field.

Recently, two important works have pointed new ways towards practical tensor methods. In the context of nonconvex optimization, Birgin et al. \cite{Birgin} presented a $p$-order tensor method that can find $\bar{x}$ with $\|\nabla f(\bar{x})\|_{*}\leq\epsilon$  in at most $\mathcal{O}(\epsilon^{-\frac{p+1}{p}})$ iterations, generalizing the bound of $\mathcal{O}(\epsilon^{-\frac{3}{2}})$ proved in \cite{NP} for the CNM (case $p=2$). The method is based on the same regularized models used in \cite{BAES}, but allows the trial points to be approximate stationary points of the tensor models. On the other hand, in the context of convex optimization, Nesterov \cite{NES6} proved that regularized tensor models are convex if the corresponding regularization parameter is sufficiently large. This makes possible the iterative solution of tensor auxiliary problems by efficient methods from Convex Optimization. 

The tensor methods in \cite{NES6} make explicit use of the Lipschitz constant of the higher-order derivative of the objective and also require the exact solution of the convex auxiliary problems. In \cite{GN3,GN4}, we proposed adaptive tensor methods for unconstrained minimization of convex functions with $\nu$-H\"{o}lder continuous $p$th derivatives. These methods generalize the regularized Newton methods presented in \cite{GN,GN2} for $p=2$, and allow inexact solution of the auxiliary problems as in \cite{Birgin}. 

In this paper, we investigate the use of Gradient Methods with Bregman distance to approximately solve the auxiliary problems in third-order tensor methods. When the regularization parameter is sufficiently large, we prove that these schemes applied to the corresponding tensor model take at most $\mathcal{O}(\log(\epsilon^{-1}))$ iterations to find either an approximate stationary point of the model  (in the sense of \cite{Birgin}) or an $\epsilon$-approximate stationary point of the original objective function.

\subsection{Contents}
The paper is organized as follows. In section 2, we state the general problem. In section 3, we establish convexity and smoothness properties of regularized third-order tensor models. In Section 4, we consider a Bregman Gradient Method for the approximate solution of smooth third-order tensor auxiliary problems. In section 4, we consider possibly nonsmooth auxiliary problem that arise in composite convex optimization. General complexity results for Bregman Gradient Methods are provided in the Appendix.

\subsection{Notations and Generalities}

In what follows, we denote by $\E$ a finite-dimensional
real vector space, and by $\E^*$ its {\em dual} space,
composed by linear functionals on $\E$. The value of
function $s \in \E^*$ at point $x \in \E$ is denoted by
$\la s, x \ra$. Given a self-adjoint positive definite 
operator $B:\E \to \E^*$ (notation $B \succ 0$), we can
endow these spaces with conjugate Euclidean norms:
$$
\ba{rcl}
\| x \| & = & \la B x, x \ra^{1/2},\; x\in \E, \quad \| s
\|_* \; = \; \la s, B^{-1} s \ra^{1/2}, \; s \in \E^*.
\ea
$$
For a smooth function $f:\text{dom}\,f\to \R$ with convex and
open domain $\text{dom}\,f\subset\E$, denote by $\nabla f(x)$
its gradient, and by $\nabla^{2}f(x)$ its Hessian evaluated at 
point $x\in\text{dom}\,f$. Note that $\nabla f(x)\in\E^{*}$ and 
$\nabla^{2}f(x)h\in\E^{*}$ for $x\in\text{dom}\,f$ and $h\in\E$.

For any integer $p\geq 1$, denote by
\begin{equation*}
D^{p}f(x)[h_{1},\ldots,h_{p}]
\end{equation*} 
the directional derivative of function $f$ at $x$ along directions
$h_{i}\in\E$, $i=1,\ldots,p$. In particular, for any $x\in\text{dom}\,f$ and $h_{1},h_{2}\in\E$ we have
\begin{equation*}
Df(x)[h_{1}]=\langle\nabla f(x),h_{1}\rangle\quad\text{and}\quad D^{2}f(x)[h_{1},h_{2}]=\langle\nabla^{2}f(x)h_{1},h_{2}\rangle.
\end{equation*}
If $h_{1}=\ldots=h_{p}=h\in\E$, we denote $D^{p}f(x)[h_{1},\ldots,h_{p}]$ as $D^{p}f(x)[h]^{p}$. With this notation, the $p$th order Taylor approximation of function $f$ at $x\in\text{dom}\,f$ can be written as follows:
\begin{equation}
f(x+h)=\Phi_{x,p}(x+h)+o(\|h\|^{p}),\,\,x+h\in\text{dom}\,f,
\label{eq:1.1}
\end{equation}
where
\begin{equation}
\Phi_{x,p}(y)\equiv f(x)+\sum_{i=1}^{p}\dfrac{1}{i!}D^{i}f(x)[y-x]^{i},\,\,y\in\E.
\label{eq:1.2}
\end{equation}
Since $D^{p}f(x)[\,.\,]$ is a 
symmetric $p$-linear form, its norm is defined as:
\begin{equation*}
\|D^{p}f(x)\|=\max_{h_{1},\ldots,h_{p}}\left\{\left|D^{p}f(x)[h_{1},\ldots,h_{p}]\right|\,:\,\|h_{i}\|\leq 1,\,i=1,\ldots,p\right\}.
\end{equation*}
It can be shown that (see, e.g., Appendix 1 in \cite{NES7})
\begin{equation*}
\|D^{p}f(x)\|=\max_{h}\left\{\left|D^{p}f(x)[h]^{p}\right|\,:\,\|h\|\leq 1\right\}.
\end{equation*}
Similarly, since $D^{p}f(x)[.\,,\ldots,\,.]-D^{p}f(y)[.,\ldots,.]$ is also a symmetric $p$-linear form for fixed $x,y\in\text{dom}\,f$, it follows that
 \begin{equation*}
\|D^{p}f(x)-D^{p}f(y)\|=\max_{h}\left\{\left|D^{p}f(x)[h]^{p}-D^{p}f(y)[h]^{p}\right|\,:\,\|h\|\leq 1\right\}.
\end{equation*}

\section{Problem Statement}

Let $f:\E\to\mathbb{R}$ be a $p$-times differentiable convex function with $\nu$-H\"{o}lder continuous $p$th derivatives, that is,
\begin{equation}
\|D^{p}f(x)-D^{p}f(y)\|\leq H_{f,p}(\nu)\|x-y\|^{\nu},\quad\forall x,y\in\E,
\label{eq:2.1}
\end{equation}
for some $\nu\in [0,1]$. Given $x\in\E$, let us consider the following minimization problem:
\begin{equation}
\min_{y\in\E}\Omega_{x,p,H}^{(\nu)}(y)\equiv \Phi_{x,p}(y)+\dfrac{H}{p!}\|y-x\|^{p+\nu},
\label{eq:2.2}
\end{equation}
where $\Phi_{x,p}(\,.\,)$ is defined in (\ref{eq:1.2}) and $H>0$.
%\begin{equation}
%\Phi_{x,p}(y)\equiv f(x)+\sum_{i=1}^{p}\dfrac{1}{i!}D^{i}f(x)[y-x]^{i},\quad y\in\E,
%\label{eq:2.3}
%\end{equation}
%is the $p$th order Taylor approximation of $f$ around $x$. 
Problems of the form (\ref{eq:2.2}) appear as auxiliary problems in $p$-order tensor methods for convex and nonconvex unconstrained optimization (see, e.g., \cite{Birgin,Mart,CGT2,GN3,GN4}). In these methods, only approximate stationary points of $\Omega_{x,p,H}^{(\nu)}(\,.\,)$ are required \cite{Birgin}. Specifically, it is enough to find $x^{+}$ such that
\begin{equation}
\Omega_{x,p,H}^{(\nu)}(x^{+})\leq f(x),
\label{eq:2.4}
\end{equation}
and
\begin{equation}
\|\nabla\Omega_{x,p,H}^{(\nu)}(x^{+})\|_{*}\leq\theta\|x^{+}-x\|^{p+\nu-1},
\label{eq:2.5}
\end{equation}
where $\theta>0$. The next lemma gives a sufficient condition for (\ref{eq:2.5}) to be satisfied.

\begin{lemma}
\label{lem:2.1}
Let $x\in\E$, $H,\theta>0$ and $\delta\in (0,1)$. If 
\begin{equation}
\|\nabla f(x^{+})\|_{*}\geq\delta\quad\text{and}\quad \|\nabla\Omega_{x,p,H}^{(\nu)}(x^{+})\|_{*}\leq\min\left\{\frac{1}{2},\frac{\theta (p-1)!}{2[H_{f,p}(\nu)+H(p+\nu)]}\right\}\delta,
\label{eq:2.6}
\end{equation}
then $x^{+}$ satisfies (\ref{eq:2.5}).
\end{lemma}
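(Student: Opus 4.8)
The plan is to estimate $\|\nabla\Omega_{x,p,H}^{(\nu)}(x^{+})\|_{*}$ from below by triangle inequality in terms of $\|\nabla f(x^{+})\|_{*}$ and the error term $\|\nabla\Omega_{x,p,H}^{(\nu)}(x^{+})-\nabla f(x^{+})\|_{*}$, and conversely to control the latter error by the hypothesis on $\|\nabla\Omega_{x,p,H}^{(\nu)}(x^{+})\|_{*}$. The key identity is that $\nabla\Omega_{x,p,H}^{(\nu)}(y)=\nabla\Phi_{x,p}(y)+\frac{H(p+\nu)}{p!}\|y-x\|^{p+\nu-2}B(y-x)$, so that
\[
\nabla\Omega_{x,p,H}^{(\nu)}(x^{+})-\nabla f(x^{+})=\big(\nabla\Phi_{x,p}(x^{+})-\nabla f(x^{+})\big)+\frac{H(p+\nu)}{p!}\|x^{+}-x\|^{p+\nu-1}\,u,
\]
where $u$ is a unit vector in $\E^{*}$. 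The first bracket is the gradient of the Taylor remainder; by the standard consequence of the H\"older condition (\ref{eq:2.1}) — which is exactly the lemma used throughout this line of work, e.g.\ in \cite{Birgin,GN3,GN4} — one has $\|\nabla\Phi_{x,p}(x^{+})-\nabla f(x^{+})\|_{*}\leq\frac{H_{f,p}(\nu)}{(p-1)!}\|x^{+}-x\|^{p+\nu-1}$. Hence
\[
\big|\,\|\nabla\Omega_{x,p,H}^{(\nu)}(x^{+})\|_{*}-\|\nabla f(x^{+})\|_{*}\,\big|\leq\frac{H_{f,p}(\nu)+H(p+\nu)}{(p-1)!}\,\|x^{+}-x\|^{p+\nu-1}.
\]

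First I would use the left inequality in (\ref{eq:2.6}), namely $\|\nabla f(x^{+})\|_{*}\geq\delta$, together with the right inequality $\|\nabla\Omega_{x,p,H}^{(\nu)}(x^{+})\|_{*}\leq\frac12\delta$, in the displayed bound to get
\[
\frac{H_{f,p}(\nu)+H(p+\nu)}{(p-1)!}\,\|x^{+}-x\|^{p+\nu-1}\;\geq\;\|\nabla f(x^{+})\|_{*}-\|\nabla\Omega_{x,p,H}^{(\nu)}(x^{+})\|_{*}\;\geq\;\delta-\tfrac12\delta=\tfrac12\delta,
\]
which rearranges to a lower bound on $\|x^{+}-x\|^{p+\nu-1}$:
\[
\|x^{+}-x\|^{p+\nu-1}\;\geq\;\frac{(p-1)!\,\delta}{2\,[H_{f,p}(\nu)+H(p+\nu)]}.
\]
Next I would combine this with the second part of the right-hand hypothesis in (\ref{eq:2.6}), i.e.\ $\|\nabla\Omega_{x,p,H}^{(\nu)}(x^{+})\|_{*}\leq\frac{\theta(p-1)!}{2[H_{f,p}(\nu)+H(p+\nu)]}\,\delta$. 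Substituting the lower bound on $\|x^{+}-x\|^{p+\nu-1}$ into the right-hand side of that inequality gives exactly
\[
\|\nabla\Omega_{x,p,H}^{(\nu)}(x^{+})\|_{*}\;\leq\;\theta\,\|x^{+}-x\|^{p+\nu-1},
\]
which is (\ref{eq:2.5}), completing the proof.

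The argument is almost entirely bookkeeping; the only non-elementary ingredient is the bound $\|\nabla\Phi_{x,p}(y)-\nabla f(y)\|_{*}\leq\frac{H_{f,p}(\nu)}{(p-1)!}\|y-x\|^{p+\nu-1}$, and I expect this to be either quoted directly or proved in a short preceding lemma via integrating the H\"older estimate (\ref{eq:2.1}) along the segment $[x,y]$; the factor $(p-1)!$ and exponent $p+\nu-1$ come out of the $p$-fold Taylor expansion of the gradient. The $\min\{\tfrac12,\cdot\}$ in (\ref{eq:2.6}) plays a double role: the $\tfrac12$ part is what makes the gap $\delta-\|\nabla\Omega\|_{*}\geq\tfrac12\delta$ usable, and the second argument is the one that feeds directly into (\ref{eq:2.5}); so the main (minor) subtlety is just to track which half of the $\min$ is invoked at which step. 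There is no real obstacle here — the lemma is a clean technical bridge ensuring the practically checkable condition "$\|\nabla f(x^{+})\|_{*}$ not yet small, and $\|\nabla\Omega\|_{*}$ driven below an absolute threshold" implies the scale-invariant stopping criterion (\ref{eq:2.5}) used in the tensor-method analysis.
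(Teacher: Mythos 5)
Your proposal is correct and follows essentially the same route as the paper's proof: both rest on the remainder bound $\|\nabla f(y)-\nabla\Phi_{x,p}(y)\|_{*}\leq\frac{H_{f,p}(\nu)}{(p-1)!}\|y-x\|^{p+\nu-1}$ together with the bound $\frac{H(p+\nu)}{p!}\|x^{+}-x\|^{p+\nu-1}$ on the gradient of the regularizer, use the $\tfrac12$ branch of the $\min$ to extract the lower bound $\|x^{+}-x\|^{p+\nu-1}\geq\frac{(p-1)!\,\delta}{2[H_{f,p}(\nu)+H(p+\nu)]}$, and then feed this into the second branch to obtain (\ref{eq:2.5}). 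The only cosmetic difference is that you phrase the key estimate as a two-sided bound on $\bigl|\,\|\nabla\Omega_{x,p,H}^{(\nu)}(x^{+})\|_{*}-\|\nabla f(x^{+})\|_{*}\bigr|$, whereas the paper chains the same triangle inequality starting from $\delta\leq\|\nabla f(x^{+})\|_{*}$.
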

\begin{proof}
From (\ref{eq:2.1}), it follows that
\begin{equation}
\|\nabla f(y)-\nabla\Phi_{x,p}(y)\|_{*}\leq\dfrac{H_{f,p}(\nu)}{(p-1)!}\|y-x\|^{p+\nu-1},\quad\forall y\in\E.
\label{eq:2.7}
\end{equation}
Combining (\ref{eq:2.6}) and (\ref{eq:2.7}) we obtain
\begin{eqnarray*}
\delta\leq\|\nabla f(x^{+})\|_{*}&\leq &\|\nabla f(x^{+})-\nabla\Phi_{x,p}(x^{+})\|_{*}+\|\nabla\Phi_{x,p}(x^{+})-\nabla\Omega_{x,p,H}^{(\nu)}(x^{+})\|_{*}\\
& & +\|\nabla\Omega_{x,p,H}^{(\nu)}(x^{+})\|_{*}\\
&\leq & \dfrac{H_{f,p}(\nu)}{(p-1)!}\|x^{+}-x\|^{p+\nu-1}+\dfrac{H(p+\nu)}{p!}\|x^{+}-x\|^{p+\nu-1}+\dfrac{\delta}{2}\\
&\leq & \dfrac{H_{f,p}(\nu)+H(p+\nu)}{(p-1)!}\|x^{+}-x\|^{p+\nu-1}+\dfrac{\delta}{2}.
\end{eqnarray*}
Thus,
\begin{equation*}
\dfrac{\delta}{2}\leq\left(\dfrac{H_{f,p}+H(p+\nu)}{(p-1)!}\right)\|x^{+}-x\|^{p+\nu-1},
\end{equation*}
which gives
\begin{equation}
\left[\dfrac{\theta (p-1)!}{2[H_{f,p}(\nu)+H(p+\nu)]}\right]\delta\leq\theta\|x^{+}-x\|^{p+\nu-1}.
\label{eq:2.8}
\end{equation}
Finally, (\ref{eq:2.5}) follows directly from the second inequality in (\ref{eq:2.6}) and (\ref{eq:2.8}).
\end{proof}

In view of Lemma \ref{lem:2.1}, $x^{+}$ satisfying (\ref{eq:2.4})-(\ref{eq:2.5}) can be computed by any monotone optimization scheme that drives the gradient of the objective to zero. It is worth mentioning that the lemma above does not require the convexity of $f$. Therefore, a slight modification of it also applies to the tensor models in \cite{Birgin,Mart,CGT2}. Our goal in the next sections is to describe iterative schemes to solve (\ref{eq:2.2}) with $p=3$, and also provide iteration complexity bounds for reducing the norm of the gradient below the threshold specified in the second inequality in (\ref{eq:2.6}).

\section{Gradient Method for Smooth Third-Order Tensor Models}

\subsection{Convexity and Relative Smoothness Properties}

The next lemma gives a sufficient condition for function $\Omega_{x,p,H}^{(\nu)}(\,.\,)$ to be convex.
\begin{lemma}
\label{lem:3.1}
Let $p\geq 2$. Then, for any $x,y\in\E$ we have
\begin{equation}
\nabla^{2}f(y)\preceq \nabla^{2}\Phi_{x,p}(y)+\dfrac{H_{f,p}(\nu)}{(p-2)!}\|y-x\|^{p+\nu-2}B. 
\label{eq:3.3}
\end{equation}
Moreover, if $H\geq (p-1)H_{f,p}(\nu)$, then function $\Omega_{x,p,H}^{(\nu)}(\,.\,)$ is convex for any $x\in\E$.
\end{lemma}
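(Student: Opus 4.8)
The plan is to derive the matrix inequality \eqref{eq:3.3} from the Hölder continuity \eqref{eq:2.1} by a Taylor-expansion argument for the Hessian, and then obtain convexity of $\Omega_{x,p,H}^{(\nu)}$ by adding the Hessian of the regularizing term and controlling it from below. First I would fix $x,y\in\E$ and $h\in\E$ with $\|h\|=1$, and consider the scalar function $\phi(t)=D^{2}f(x+t(y-x))[h]^{2}$. Integrating its derivative, or more directly applying the integral form of Taylor's theorem to the $(p-2)$th order expansion of $D^2 f(\cdot)[h]^2$ around $x$, I would write $D^{2}f(y)[h]^{2}$ as $D^{2}\Phi_{x,p}(y)[h]^{2}$ plus a remainder; the key point is that $\nabla^{2}\Phi_{x,p}(y)$ is precisely the $(p-2)$th order Taylor polynomial (in the variable $y-x$) of $y\mapsto\nabla^{2}f(y)$, since differentiating \eqref{eq:1.2} twice kills the first two terms and lowers each remaining degree by two. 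The remainder is then bounded using \eqref{eq:2.1}: the difference $\|D^{p}f(\cdot)-D^{p}f(x)\|\le H_{f,p}(\nu)\|\cdot-x\|^{\nu}$ feeds into the Lagrange/integral remainder of the degree-$(p-2)$ expansion, producing the factor $\frac{1}{(p-2)!}\|y-x\|^{p+\nu-2}$ after computing $\int_{0}^{1}(1-\tau)^{p-2}\tau^{\nu}\,d\tau\le\int_{0}^{1}(1-\tau)^{p-2}\,d\tau=\frac{1}{p-1}$, or with an even simpler crude bound. Since this holds for every unit $h$, it gives \eqref{eq:3.3} in the Loewner order.

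For the second claim, I would compute $\nabla^{2}\Omega_{x,p,H}^{(\nu)}(y)=\nabla^{2}\Phi_{x,p}(y)+\frac{H}{p!}\nabla^{2}\!\left(\|y-x\|^{p+\nu}\right)$. The Hessian of $r(y)=\|y-x\|^{p+\nu}$ is, writing $t=\|y-x\|$, equal to $(p+\nu)t^{p+\nu-2}B+(p+\nu)(p+\nu-2)t^{p+\nu-4}Bzz^{\top}B$ with $z=y-x$; the second term is positive semidefinite when $p+\nu\ge 2$ (which holds since $p\ge 2$), so $\nabla^{2}r(y)\succeq (p+\nu)\|y-x\|^{p+\nu-2}B$. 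Combining this with \eqref{eq:3.3}, and using convexity of $f$ (so $\nabla^{2}f(y)\succeq 0$), I get
\[
\nabla^{2}\Omega_{x,p,H}^{(\nu)}(y)\;\succeq\;\nabla^{2}f(y)-\dfrac{H_{f,p}(\nu)}{(p-2)!}\|y-x\|^{p+\nu-2}B+\dfrac{H(p+\nu)}{p!}\|y-x\|^{p+\nu-2}B,
\]
and the bracketed coefficient of $B$ is nonnegative as soon as $\frac{H(p+\nu)}{p!}\ge\frac{H_{f,p}(\nu)}{(p-2)!}$, i.e. $H(p+\nu)\ge (p-1)p\,H_{f,p}(\nu)$; since $p+\nu\ge p$, it suffices that $H\ge (p-1)H_{f,p}(\nu)$. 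Hence $\nabla^{2}\Omega_{x,p,H}^{(\nu)}(y)\succeq 0$ for all $y$, which yields convexity.

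The main obstacle I expect is the bookkeeping in the first step: verifying carefully that $\nabla^{2}\Phi_{x,p}(y)$ really is the degree-$(p-2)$ Taylor expansion of $\nabla^{2}f$ at $x$ evaluated at $y-x$ (matching indices and factorials correctly), and then writing the integral remainder in a form to which the Hölder bound \eqref{eq:2.1} applies directly — one must be slightly careful that \eqref{eq:2.1} controls $\|D^{p}f(w)-D^{p}f(x)\|$ with exponent $\nu$, not the increments of $D^{p-1}f$, so the expansion should be carried to order $p-2$ in $\nabla^2 f$ (equivalently order $p$ in $f$) with the last retained derivative being $D^{p}f(x)$ and the remainder measuring $D^{p}f(w)-D^{p}f(x)$. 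Once that is set up cleanly, the scalar integral estimate and the Loewner-order manipulations in the second step are routine.
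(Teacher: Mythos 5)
Your plan is correct and follows the standard route: the paper itself gives no proof here (it only cites Lemma 5.1 of \cite{GN4}), and that reference establishes \eqref{eq:3.3} by exactly your argument, namely that $\nabla^{2}\Phi_{x,p}(y)$ is the degree-$(p-2)$ Taylor polynomial of $\nabla^{2}f$ at $x$ and the integral remainder is controlled by \eqref{eq:2.1}; the second claim then follows from $\nabla^{2}\bigl(\|y-x\|^{p+\nu}\bigr)\succeq (p+\nu)\|y-x\|^{p+\nu-2}B$ and $\nabla^{2}f(y)\succeq 0$ as you describe. One bookkeeping correction in the step you flagged: the remainder of the order-$(p-2)$ expansion of $\nabla^{2}f$ carries the kernel $\frac{1}{(p-3)!}(1-\tau)^{p-3}$ (not $(1-\tau)^{p-2}$), so the constant comes out as $\frac{1}{(p-3)!}\int_{0}^{1}(1-\tau)^{p-3}\tau^{\nu}\,d\tau\le\frac{1}{(p-3)!}\cdot\frac{1}{p-2}=\frac{1}{(p-2)!}$, with the case $p=2$ handled directly from \eqref{eq:2.1}.
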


\begin{proof}
See Lemma 5.1 in \cite{GN4}.
\end{proof}

In order to exploit additional properties of $\Omega_{x,p,H}^{(\nu)}(\,.\,)$, let us focus on the case $p=3$. Note that
\begin{equation}
\Phi_{x,3}(y)=f(x)+\langle\nabla f(x),y-x\rangle +\dfrac{1}{2}\langle\nabla^{2}f(x)(y-x),(y-x)\rangle +\dfrac{1}{6}D^{3}f(x)[y-x]^{3},
\label{eq:3.1}
\end{equation}
and
\begin{equation}
\Omega_{x,3,H}^{(\nu)}(y)=\Phi_{x,3}(y)+\dfrac{H}{6}\|y-x\|^{3+\nu}.
\label{eq:3.2}
\end{equation}

The next auxiliary result gives bounds on the third-order derivatives of $f$. Its proof is an adaptation of the proof of Lemma 3 in \cite{NES6}.
\begin{lemma}
\label{lem:3.2}
For any $x,y\in\E$ and $\tau>0$ we have
\begin{equation}
-\dfrac{1}{\tau}\nabla^{2}f(x)-\tau^{\nu}H_{f,3}(\nu)\|y-x\|^{1+\nu}B\preceq D^{3}f(x)[y-x]\preceq \dfrac{1}{\tau}\nabla^{2}f(x)+\tau^{\nu}H_{f,3}(\nu)\|y-x\|^{1+\nu}B.
\label{eq:3.4}
\end{equation}
\end{lemma}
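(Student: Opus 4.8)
The plan is to reduce the two-sided operator inequality (\ref{eq:3.4}) to a scalar statement about the bilinear form $h\mapsto D^{3}f(x)[y-x][h]^{2}$, and then to bound that form using the H\"older continuity (\ref{eq:2.1}) together with an elementary inequality of the form $ab\leq \frac{1}{2}(\tau^{-1}a^{2}+\tau b^{2})$ (or its $(1+\nu)$-weighted Young's inequality analogue). Concretely, it suffices to show that for every $h\in\E$,
\begin{equation*}
\left|D^{3}f(x)[y-x][h]^{2}\right|\leq \dfrac{1}{\tau}\langle\nabla^{2}f(x)h,h\rangle+\tau^{\nu}H_{f,3}(\nu)\|y-x\|^{1+\nu}\|h\|^{2},
\end{equation*}
since plugging $h$ with $\|h\|\leq 1$ and taking the appropriate max recovers (\ref{eq:3.4}) in the Loewner order. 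So the first step is to fix $x,y,h$ and reduce everything to scalars by introducing $\phi(t)=D^{2}f(x+t(y-x))[h]^{2}$ for $t\in[0,1]$, whose derivative at $t=0$ is exactly $D^{3}f(x)[y-x][h]^{2}$.

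Second, I would use convexity of $f$ to get $\phi(t)=\langle\nabla^{2}f(x+t(y-x))h,h\rangle\geq 0$ for all $t$, and then estimate $\phi'(0)$. The trick (following Lemma 3 of \cite{NES6}) is that for a nonnegative function one controls the derivative at a point by the value at the point plus a modulus-of-continuity term: writing $\phi(\tau^{-1}\cdot\text{something})$—more precisely, for $s>0$,
\begin{equation*}
\phi(0)+s\,\phi'(0)\leq \phi(s)+\left|\phi(s)-\phi(0)-s\phi'(0)\right|,
\end{equation*}
and the left side, being a value $\phi$ would take if it were affine, can be made nonnegative-style bounded; combined with $\phi(s)\geq 0$ one gets $-s\,\phi'(0)\leq\phi(0)+|\phi(s)-\phi(0)-s\phi'(0)|$, i.e. a lower bound on $\phi'(0)$, and symmetrically (using $\phi(-s)\geq 0$ if it is in the domain, or rescaling) an upper bound. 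The remainder $|\phi(s)-\phi(0)-s\phi'(0)|$ is controlled by (\ref{eq:2.1}): since $\phi'(t)-\phi'(0)=\big(D^{3}f(x+t(y-x))-D^{3}f(x)\big)[y-x][h]^{2}$, H\"older continuity gives $|\phi'(t)-\phi'(0)|\leq H_{f,3}(\nu)\,t^{\nu}\|y-x\|^{1+\nu}\|h\|^{2}$, and integrating from $0$ to $s$ yields $|\phi(s)-\phi(0)-s\phi'(0)|\leq \frac{H_{f,3}(\nu)}{1+\nu}s^{1+\nu}\|y-x\|^{1+\nu}\|h\|^{2}$.

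Third, I would assemble the pieces: one obtains $-\phi'(0)\leq \frac{1}{s}\phi(0)+\frac{H_{f,3}(\nu)}{1+\nu}s^{\nu}\|y-x\|^{1+\nu}\|h\|^{2}$ and the matching upper bound, then set $s=\tau$ (and absorb the harmless factor $\frac{1}{1+\nu}\leq 1$ into the stated constant, or carry it and note it only strengthens the bound) to arrive at the displayed scalar inequality. Rewriting $\phi(0)=\langle\nabla^{2}f(x)h,h\rangle$ and $\|h\|^{2}=\langle Bh,h\rangle$ turns this into the Loewner inequality (\ref{eq:3.4}). The main obstacle is the second step: getting the sign and direction of the derivative bound right from nonnegativity of $\phi$, and in particular handling the fact that $x+s(y-x)$ and $x-s(y-x)$ must lie in (or be legitimately extended past) the domain — here the hypothesis that $f$ is defined and convex on all of $\E$ makes this clean, so the argument of \cite{NES6} goes through essentially verbatim; the rest is bookkeeping with Young's inequality and the definitions of the norms.
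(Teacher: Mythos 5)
Your proposal is correct and follows essentially the same route as the paper: both exploit convexity (so that $\langle\nabla^{2}f(x+s(y-x))h,h\rangle\geq 0$) together with the H\"older bound on the second-order Taylor remainder of $\nabla^{2}f$ along the segment, then divide by the step length $s=\tau$; this is exactly the adaptation of Lemma 3 in \cite{NES6} that the paper carries out. The only cosmetic difference is that the paper obtains the remainder bound by citing Lemma \ref{lem:3.1} at the operator level, whereas you re-derive it by integrating (\ref{eq:2.1}) along the segment, which yields the slightly sharper constant $H_{f,3}(\nu)/(1+\nu)$ that you then correctly relax.
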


\begin{proof}
Given $u,y\in\E$, by Lemma \ref{lem:3.1} (for $p=3$) and the convexity of $f$, we have:
\begin{eqnarray*}
0&\leq &\la\nabla^{2}f(y)u,u\ra\leq\la\nabla^{2}\Phi_{x,3}(y)u,u\ra + H_{f,3}(\nu)\|y-x\|^{1+\nu}\|u\|^{2}\\
 & =   &\la\left(\nabla^{2}f(x)+D^{3}f(x)[y-x]\right)u,u\ra +H_{f,3}(\nu)\|y-x\|^{1+\nu}\|u\|^{2}.
\end{eqnarray*}
Thus, replacing $y$ by $\bar{y}=x+\tau (y-x)$, we obtain
\begin{eqnarray*}
0&\leq &\la\nabla^{2}f(\bar{y})u,u\ra\leq\la\nabla^{2}f(x)u,u\ra+\tau\la D^{3}f(x)[y-x]u,u\ra+\tau^{1+\nu}H_{f,3}(\nu)\|y-x\|^{1+\nu}\|u\|^{2}
\end{eqnarray*}
\begin{equation*}
\Longrightarrow -\tau\la D^{3}f(x)[y-x]u,u\ra\leq \la\nabla^{2}f(x)u,u\ra +\tau^{1+\nu}H_{f,3}(\nu)\|y-x\|^{1+\nu}\|u\|^{2}.
\end{equation*}
Then, dividing this inequality by $-\tau$, it follows that
\begin{equation}
\la D^{3}f(x)[y-x]u,u\ra\geq -\dfrac{1}{\tau}\la\nabla^{2}f(x)u,u\ra--\tau^{\nu}H_{f,3}(\nu)\|y-x\|^{1+\nu}\|u\|^{2}.
\label{eq:3.5}
\end{equation}
Since $u$ is arbitrary, this gives the first inequality in (\ref{eq:3.4}). The second inequality in (\ref{eq:3.4}) can be obtained by replacing $y-x$ by $-(y-x)$ in (\ref{eq:3.5}).
\end{proof}

Now, using Lemma \ref{lem:3.2}, we can prove relative smoothness properties\footnote{See also \cite{BBT} for a version of relative smoothness without strong convexity.} \cite{LU} of $\Omega_{x,3,H}^{(\nu)}(\,.\,)$.
\begin{theorem}
Let $\tau_{H}=\left[\dfrac{(3+\nu)H}{6H_{f,3}(\nu)}\right]^{\frac{1}{1+\nu}}$ and
\begin{equation}
\rho_{x}(y)\equiv \dfrac{1}{2}\langle\nabla^{2}f(x)(y-x),y-x\rangle + \dfrac{1}{3+\nu}\|y-x\|^{3+\nu}.
\label{eq:3.6}
\end{equation}
Then, the following assertions hold:
\begin{itemize}
\item[(a)] Function $\Omega_{x,3,H}^{(\nu)}(\,.\,)$ is $L_{H}$-smooth relative to $\rho_{x}(\,.\,)$ for 
\begin{equation}
L_{H}=\max\left\{\dfrac{\tau_{H}+1}{\tau_{H}},\tau_{H}^{\nu}(\tau_{H}+1)H_{f,3}(\nu)\right\}.
\label{eq:3.7}
\end{equation}
\item[(b)] If $\tau_{H}\geq 1$, then function $\Omega_{x,3,H}^{(\nu)}(\,.\,)$ is $\mu_{H}$-strongly convex relative to $\rho_{x}(\,.\,)$ for 
\begin{equation}
\mu_{H}=\min\left\{\dfrac{\tau_{H}-1}{\tau_{H}},\tau_{H}^{\nu}(\tau_{H}-1)H_{f,3}(\nu)\right\}.
\label{eq:3.8}
\end{equation}
\end{itemize}
\label{thm:3.3}
\end{theorem}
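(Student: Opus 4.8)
The statement asserts relative smoothness and relative strong convexity of $\Omega_{x,3,H}^{(\nu)}$ with respect to the reference function $\rho_x$. Recall that $\Omega$ is $L$-smooth relative to $\rho_x$ iff $\nabla^2\Omega_{x,3,H}^{(\nu)}(y)\preceq L\,\nabla^2\rho_x(y)$ for all $y$, and $\mu$-strongly convex relative to $\rho_x$ iff $\nabla^2\Omega_{x,3,H}^{(\nu)}(y)\succeq \mu\,\nabla^2\rho_x(y)$. So the whole proof reduces to computing these two Hessians and sandwiching one between scalar multiples of the other. The plan is: (i) differentiate \eqref{eq:3.2} twice to get $\nabla^2\Omega_{x,3,H}^{(\nu)}(y)=\nabla^2 f(x)+D^3f(x)[y-x]+\tfrac{H}{6}\nabla^2\big(\|y-x\|^{3+\nu}\big)$; (ii) differentiate \eqref{eq:3.6} twice to get $\nabla^2\rho_x(y)=\nabla^2 f(x)+\nabla^2\big(\tfrac{1}{3+\nu}\|y-x\|^{3+\nu}\big)$; (iii) apply Lemma \ref{lem:3.2} with the specific choice $\tau=\tau_H$ to bound the middle term $D^3f(x)[y-x]$ above and below by $\pm\big(\tfrac{1}{\tau_H}\nabla^2 f(x)+\tau_H^\nu H_{f,3}(\nu)\|y-x\|^{1+\nu}B\big)$; (iv) collect terms and recognize that the coefficients that appear are exactly $L_H$ and $\mu_H$.

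The key computation is the Hessian of the regularizer. Writing $r=\|y-x\|$, one has $\nabla\big(\tfrac{1}{q}r^{q}\big)=r^{q-2}B(y-x)$ and $\nabla^2\big(\tfrac{1}{q}r^{q}\big)=r^{q-2}B+(q-2)r^{q-4}B(y-x)(y-x)^\top B$, which is positive semidefinite and satisfies $r^{q-2}B\preceq\nabla^2\big(\tfrac{1}{q}r^{q}\big)\preceq(q-1)r^{q-2}B$ for $q\ge 2$. Applying this with $q=3+\nu$: for $\rho_x$ the regularizer contributes a term $R(y)$ with $r^{1+\nu}B\preceq R(y)\preceq(2+\nu)r^{1+\nu}B$, while for $\Omega$ the contribution is $\tfrac{H(3+\nu)}{6}$ times the analogous object, giving $\tfrac{H(3+\nu)}{6}r^{1+\nu}B$ as a lower bound and $\tfrac{H(2+\nu)(3+\nu)}{6}r^{1+\nu}B$ as an upper bound on that piece. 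Actually, a cleaner route that avoids splitting off the rank-one part: the regularizer appears in \emph{both} $\Omega$ and $\rho_x$, only with different scalar prefactors ($\tfrac{H}{6}$ versus $\tfrac{1}{3+\nu}$), so it suffices to match $\tfrac{H(3+\nu)}{6}\cdot\nabla^2\big(\tfrac{1}{3+\nu}r^{3+\nu}\big)$ against the remaining terms, and I expect the definition $\tau_H=\big[\tfrac{(3+\nu)H}{6H_{f,3}(\nu)}\big]^{1/(1+\nu)}$ is precisely chosen so that $\tau_H^\nu H_{f,3}(\nu)=\tfrac{H(3+\nu)}{6}\tau_H^{-1}$, i.e., the "curvature-of-$f$" coefficient and the "regularizer" coefficient become comparable through the single ratio $\tau_H$.

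With these ingredients the two inequalities fall out by direct substitution. For part (a): $\nabla^2\Omega_{x,3,H}^{(\nu)}(y)\preceq \nabla^2 f(x)+\tfrac{1}{\tau_H}\nabla^2 f(x)+\tau_H^\nu H_{f,3}(\nu)r^{1+\nu}B+\tfrac{H}{6}\nabla^2(r^{3+\nu})$; using $\tau_H^\nu H_{f,3}(\nu)=\tfrac{H(3+\nu)}{6\tau_H}$ one writes the middle $B$-term as $\tfrac{H(3+\nu)}{6}\cdot\tfrac{1}{\tau_H}r^{1+\nu}B\preceq\tfrac{1}{\tau_H}\cdot\tfrac{H}{6}\nabla^2(r^{3+\nu})$ (since $r^{1+\nu}B=\nabla^2(\tfrac{1}{3+\nu}r^{3+\nu})$ up to the positive rank-one term, which helps in the right direction), so the whole bound is $\preceq\tfrac{\tau_H+1}{\tau_H}\nabla^2 f(x)+\big(\text{something}\le\tau_H^\nu(\tau_H+1)H_{f,3}(\nu)\big)\nabla^2(\tfrac{1}{3+\nu}r^{3+\nu})$... and one reads off $L_H$ as the max of the two coefficients $\tfrac{\tau_H+1}{\tau_H}$ and $\tau_H^\nu(\tau_H+1)H_{f,3}(\nu)$. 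Part (b) is the mirror image, using the \emph{first} (lower) inequality in \eqref{eq:3.4} and the hypothesis $\tau_H\ge 1$ to guarantee $\tau_H-1\ge 0$ so that the coefficients $\tfrac{\tau_H-1}{\tau_H}$ and $\tau_H^\nu(\tau_H-1)H_{f,3}(\nu)$ are nonnegative, whence $\nabla^2\Omega_{x,3,H}^{(\nu)}(y)\succeq\mu_H\,\nabla^2\rho_x(y)$.

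\textbf{Main obstacle.} The one genuinely fiddly point is handling the rank-one correction in $\nabla^2(r^{3+\nu})$: it appears with coefficient $H/6$ in $\Omega$ and with a \emph{different} effective coefficient coming from $\rho_x$, and one must check that the rank-one terms do not spoil either inequality. For the smoothness bound the rank-one term sits on the side where it can only help (it is added to the reference $\nabla^2\rho_x$ scaled up), but for the strong-convexity bound one has to be slightly more careful — the rank-one term must be comparable on both sides, which works because it carries the \emph{same} matrix $B(y-x)(y-x)^\top B$ with prefactors $\tfrac{H}{6}(1+\nu)r^{\nu-1}$ (in $\Omega$) and $\tfrac{1}{3+\nu}(1+\nu)r^{\nu-1}$ (in $\rho_x$), so matching them amounts again to the scalar identity encoded in $\tau_H$. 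I expect the author instead argues at the level of the $B$-coefficients only, treating $\nabla^2\rho_x(y)$ and $\nabla^2(\tfrac{1}{3+\nu}r^{3+\nu})$ as the "same" object for the purpose of the inequality since they have identical eigenstructure; in that reading the rank-one subtlety disappears entirely and the proof is a two-line substitution of Lemma \ref{lem:3.2} with $\tau=\tau_H$ followed by reading off the constants.
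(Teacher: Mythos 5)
Your proposal follows essentially the same route as the paper's proof: compare $\nabla^{2}\Omega_{x,3,H}^{(\nu)}(y)$ and $\nabla^{2}\rho_{x}(y)$ directly, apply Lemma \ref{lem:3.2} with $\tau=\tau_{H}$, use the identity $\tau_{H}^{1+\nu}H_{f,3}(\nu)=\tfrac{(3+\nu)H}{6}$ to merge the H\"older remainder with the regularizer, and read off $L_{H}$ and $\mu_{H}$ as the max/min of the two resulting coefficients, invoking Proposition 1.1 of \cite{LU}. Your ``main obstacle'' is in fact handled identically in both parts by the single one-sided bound $\|y-x\|^{1+\nu}B\preceq\nabla^{2}\bigl(\tfrac{1}{3+\nu}\|y-x\|^{3+\nu}\bigr)$ (the rank-one correction is positive semidefinite, and in part (b) it is subtracted, so enlarging it only lowers the expression, which is the correct direction), exactly as the paper does.
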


\begin{proof}
In view of (\ref{eq:3.2}) and (\ref{eq:3.4}), we have
\begin{eqnarray*}
\nabla^{2}\Omega_{x,3,H}^{(\nu)}(y)&=&\nabla^{2}f(x)+D^{3}f(x)[y-x]+\nabla^{2}\left(\dfrac{H}{6}\|y-x\|^{3+\nu}\right)\nonumber\\
                                   &\preceq & \left(\dfrac{\tau_{H}+1}{\tau_{H}}\right)\nabla^{2}f(x)+\tau_{H}^{\nu}H_{f,3}(\nu)\|y-x\|^{1+\nu}B+\nabla^{2}\left(\dfrac{H}{6}\|y-x\|^{3+\nu}\right)\nonumber\\
                                   &\preceq & \left(\dfrac{\tau_{H}+1}{\tau_{H}}\right)\nabla^{2}f(x)+\tau_{H}^{\nu}H_{f,3}(\nu)\nabla^{2}\left(\dfrac{1}{3+\nu}\|y-x\|^{3+\nu}\right)\\
                                   & & +\dfrac{(3+\nu)H}{6}\nabla^{2}\left(\dfrac{1}{3+\nu}\|y-x\|^{3+\nu}\right)\nonumber\\
                                   & = & \left(\dfrac{\tau_{H}+1}{\tau_{H}}\right)\nabla^{2}f(x)+\tau_{H}^{\nu}(\tau_{H}+1)H_{f,3}(\nu)\nabla^{2}\left(\dfrac{1}{3+\nu}\|y-x\|^{3+\nu}\right)\nonumber\\
                                   &\preceq & \max\left\{\dfrac{\tau_{H}+1}{\tau_{H}},\tau_{H}^{\nu}(\tau_{H}+1)H_{f,3}(\nu)\right\}\left[\nabla^{2}f(x)+\nabla^{2}\left(\dfrac{1}{3+\nu}\|y-x\|^{3+\nu}\right)\right]\nonumber\\
                                   &=& L_{H}\nabla^{2}\rho_{x}(y).
\end{eqnarray*}

Since $\rho_{x}(\,.\,)$ is convex, by Proposition 1.1 in \cite{LU} we conclude that $\Omega_{x,3,H}^{(\nu)}(\,.\,)$ is $L_{H}$-smooth relative to $\rho_{x}(\,.\,)$. This proves (a).

Now, suppose that $\tau_{H}\geq 1$. In this case, by (\ref{eq:3.2}) and (\ref{eq:3.4}) we have
\begin{eqnarray*}
\nabla^{2}\Omega_{x,3,H}^{(\nu)}(y)&=&\nabla^{2}f(x)+D^{3}f(x)[y-x]+\nabla^{2}\left(\dfrac{H}{6}\|y-x\|^{3+\nu}\right)\\
 &\succeq & \left(\dfrac{\tau_{H}-1}{\tau_{H}}\right)\nabla^{2}f(x)-\tau_{H}^{\nu}H_{f,3}(\nu)\|y-x\|^{1+\nu}B+\dfrac{(3+\nu)H}{6}\nabla^{2}\left(\dfrac{1}{3+\nu}\|y-x\|^{3+\nu}\right)\\
 &\succeq & \left(\dfrac{\tau_{H}-1}{\tau_{H}}\right)\nabla^{2}f(x)-\tau_{H}^{\nu}H_{f,3}(\nu)\nabla^{2}\left(\dfrac{1}{3+\nu}\|y-x\|^{3+\nu}\right)\\
 & &+\dfrac{(3+\nu)H}{6}\left(\dfrac{1}{3+\nu}\|y-x\|^{3+\nu}\right)\\
 & = & \left(\dfrac{\tau_{H}-1}{\tau_{H}}\right)\nabla^{2}f(x)+\tau_{H}^{\nu}(\tau_{H}-1)H_{f,3}(\nu)\nabla^{2}\left(\dfrac{1}{3+\nu}\|y-x\|^{3+\nu}\right)\\
 &\succeq &\min\left\{\dfrac{\tau_{H}-1}{\tau_{H}},\tau_{H}^{\nu}(\tau_{H}-1)H_{f,3}(\nu)\right\}\left[\nabla^{2}f(x)+\nabla^{2}\left(\dfrac{1}{3+\nu}\|y-x\|^{3+\nu}\right)\right]\\
 & = & \mu_{H}\nabla^{2}\rho_{x}(y).
\end{eqnarray*}
Thus, by Proposition 1.1 in \cite{LU}, we conclude that $\Omega_{x,3,H}^{(\nu)}(\,.\,)$ is $\mu_{H}$-strongly convex relative to $\rho_{x}(\,.\,)$, and this proves (b).
\end{proof}

\begin{remark}
\label{rem:3.1}
Note that
\begin{equation*}
\nabla^{2}\left(\frac{1}{3+\nu}\|y-x\|^{3+\nu}\right)=(1+\nu)\|y-x\|^{\nu-1}B(y-x)(y-x)^{T}B+\|y-x\|^{1+\nu}B.
\end{equation*}
Consequently, for all $y\in\E$, we have
\begin{equation}
\|\nabla^{2}\rho_{x}(y)\|\leq \|\nabla^{2}f(x)\|+(2+\nu)\|y-x\|^{1+\nu},
\label{eq:3.9}
\end{equation}
where $\|A\|=\max_{\|h\|=1}\,\|Ah\|$, for any matrix $A$.
Moreover, by Lemma 5 in \cite{DOI}, it follows that $\rho_{x}(\,.\,)$ is uniformly convex of degree $3+\nu$ with parameter $2^{-(1+\nu)}$.
\end{remark}

The next lemma establishes an upper bound for the Hessians of function $\rho_{x}(\,.\,)$ when $H\geq H_{f,p}(\nu)$. 
\begin{lemma}
Given $x\in\E$ and $H\geq H_{f,3}(\nu)$, let 
\begin{equation*}
\mathcal{L}_{H}(x)=\left\{z\in\E\,:\,\Omega_{x,3,H}^{(\nu)}(z)\leq f(x)\right\}.
\end{equation*}
Suppose that $f$ has a global minimizer $x^{*}$ and that 
\begin{equation*}
x\in\mathcal{F}(x_{0})\equiv \left\{z\in\E\,:\,f(z)\leq f(x_{0})\right\},
\end{equation*}
with
\begin{equation}
\sup_{y\in\mathcal{F}(x_{0})}\|y-x^{*}\|\leq R_{0}<+\infty,
\label{eq:3.10}
\end{equation}
and $R_{0}\geq 1$. Then,
\begin{equation}
\sup\left\{\|\nabla^{2}\rho_{x}(y)\|\,:\,y\in\text{co}\left(\mathcal{L}_{H}(x)\right)\right\}\leq \|\nabla^{2}f(x)\|+12R_{0}^{2}\equiv N_{x},
\label{eq:3.11}
\end{equation}
where $\text{co}\left(X\right)$ denotes the convex hull of the set $X$.
\label{lem:3.5}
\end{lemma}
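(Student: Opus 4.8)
The plan is to bound $\|\nabla^{2}\rho_{x}(y)\|$ using the estimate (\ref{eq:3.9}) from Remark \ref{rem:3.1}, which reduces the task to controlling $\|y-x\|^{1+\nu}$ uniformly over $y\in\text{co}(\mathcal{L}_{H}(x))$. Since the bound in (\ref{eq:3.9}) is convex in $y$ (it is a norm raised to a power $\geq 1$ plus a constant, hence convex), its supremum over the convex hull of $\mathcal{L}_{H}(x)$ is attained on $\mathcal{L}_{H}(x)$ itself; so it suffices to prove $\|y-x\|\leq 2R_{0}$ for every $y\in\mathcal{L}_{H}(x)$. Plugging $\|y-x\|^{1+\nu}\leq (2R_{0})^{1+\nu}\leq (2R_{0})^{2}=4R_{0}^{2}$ (using $R_{0}\geq 1$ and $\nu\in[0,1]$) into (\ref{eq:3.9}) gives $\|\nabla^{2}\rho_{x}(y)\|\leq \|\nabla^{2}f(x)\|+(2+\nu)\cdot 4R_{0}^{2}\leq \|\nabla^{2}f(x)\|+12R_{0}^{2}=N_{x}$, which is exactly (\ref{eq:3.11}).

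It remains to establish the diameter bound $\|y-x\|\leq 2R_{0}$ for $y\in\mathcal{L}_{H}(x)$. First, since $x\in\mathcal{F}(x_{0})$ and $x^{*}$ is a global minimizer with $x^{*}\in\mathcal{F}(x_{0})$ as well, (\ref{eq:3.10}) gives $\|x-x^{*}\|\leq R_{0}$. By the triangle inequality it is enough to show $\|y-x^{*}\|\leq R_{0}$, and for this I would show $f(y)\leq f(x_{0})$, i.e. $y\in\mathcal{F}(x_{0})$, and then invoke (\ref{eq:3.10}) again. The key point is that membership in $\mathcal{L}_{H}(x)$ forces $y$ to decrease $f$: if $y\in\mathcal{L}_{H}(x)$, then $\Omega_{x,3,H}^{(\nu)}(y)\leq f(x)$; this is precisely condition (\ref{eq:2.4}) with $p=3$ and $x^{+}=y$, and the discussion around Lemma \ref{lem:2.1} (or the standard majorization argument for tensor models) shows that when $H\geq H_{f,3}(\nu)$ the regularized model $\Omega_{x,3,H}^{(\nu)}$ is a global upper model for $f$, namely $f(y)\leq \Omega_{x,3,H}^{(\nu)}(y)$ for all $y\in\E$. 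Combining these two inequalities yields $f(y)\leq f(x)\leq f(x_{0})$, so $y\in\mathcal{F}(x_{0})$, hence $\|y-x^{*}\|\leq R_{0}$, and therefore $\|y-x\|\leq\|y-x^{*}\|+\|x^{*}-x\|\leq 2R_{0}$, as desired.

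The main obstacle is justifying the global majorization $f(y)\leq \Omega_{x,3,H}^{(\nu)}(y)$ under the hypothesis $H\geq H_{f,3}(\nu)$ rather than the larger constant $H\geq (p-1)H_{f,3}(\nu)=2H_{f,3}(\nu)$ appearing in Lemma \ref{lem:3.1}. This follows from the standard integral form of Taylor's theorem together with the H\"older condition (\ref{eq:2.1}): integrating $\|D^{3}f(x+t(y-x))-D^{3}f(x)\|\leq H_{f,3}(\nu)t^{\nu}\|y-x\|^{\nu}$ three times along the segment $[x,y]$ produces the bound $|f(y)-\Phi_{x,3}(y)|\leq \frac{H_{f,3}(\nu)}{3!}\cdot\frac{3!}{(1+\nu)(2+\nu)(3+\nu)}\|y-x\|^{3+\nu}\leq \frac{H_{f,3}(\nu)}{3!}\|y-x\|^{3+\nu}$, where the constant $\tfrac{3!}{(1+\nu)(2+\nu)(3+\nu)}\leq 1$ for $\nu\in[0,1]$; hence $f(y)\leq \Phi_{x,3}(y)+\frac{H_{f,3}(\nu)}{3!}\|y-x\|^{3+\nu}\leq \Phi_{x,3}(y)+\frac{H}{3!}\|y-x\|^{3+\nu}=\Omega_{x,3,H}^{(\nu)}(y)$. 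Everything else is an elementary triangle-inequality and convexity-of-the-bound argument; the choice of the constant $12R_{0}^{2}$ is exactly what makes the arithmetic $(2+\nu)\cdot 4R_{0}^{2}\leq 12R_{0}^{2}$ close for all $\nu\in[0,1]$.
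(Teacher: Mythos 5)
Your proof is correct and follows essentially the same route as the paper's: show that $H\geq H_{f,3}(\nu)$ makes $\Omega_{x,3,H}^{(\nu)}(\,.\,)$ a global upper bound on $f$, deduce $\mathcal{L}_{H}(x)\subset\mathcal{F}(x_{0})$ and hence $\|y-x\|\leq 2R_{0}$, pass to the convex hull, and plug into the Hessian bound of Remark~\ref{rem:3.1}. The only cosmetic differences are that you justify the majorization $f(y)\leq\Omega_{x,3,H}^{(\nu)}(y)$ explicitly via the integral form of Taylor's theorem (the paper simply cites the H\"older condition (\ref{eq:2.1})), and you pass to the convex hull by convexity of the upper bound rather than by the paper's direct triangle inequality on convex combinations.
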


\begin{proof}
If $y\in\mathcal{L}_{H}(x)$, then
\begin{equation}
\Omega_{x,3,H}^{(\nu)}(y)\leq f(x)\leq f(x_{0}).
\label{eq:3.12}
\end{equation}
Since $H\geq H_{f,3}(\nu)$, it follows from (\ref{eq:2.1}) that
\begin{equation}
f(y)\leq\Omega_{x,3,H}^{(\nu)}(y).
\label{eq:3.13}
\end{equation}
Combining (\ref{eq:3.12}) and (\ref{eq:3.13}), we conclude that $y\in\mathcal{F}(x_{0})$ and, by (\ref{eq:3.10}), we obtain
\begin{equation}
\|y-x\|\leq \|y-x^{*}\|+\|x^{*}-x\|\leq 2R_{0}.
\label{eq:3.14}
\end{equation}
Now, let $y\in\text{co}\left(\mathcal{L}_{H}(x)\right)$. Then, there exists $\lambda\in [0,1]$ and $y_{1},y_{2}\in\mathcal{L}_{H}(x)$ such that $y=(1-\lambda)y_{1}+\lambda y_{2}$. Consequently, using (\ref{eq:3.14}), we get
\begin{equation}
\|y-x\|\leq (1-\lambda)\|y_{1}-x\|+\lambda\|y_{2}-x\|\leq 2R_{0}.
\label{eq:3.15}
\end{equation} 
Finally, by (\ref{eq:3.9}) and (\ref{eq:3.15}), we conclude that (\ref{eq:3.11}) holds.
\end{proof}

Even when $H<H_{f,p}(\nu)$ and $x\notin\mathcal{F}(x_{0})$, we can bound the Hessians of $\rho_{x}(\,.\,)$ on $\text{co}\left(\mathcal{L}_{H}(x)\right)$. For that, we need first to establish the coercivity of $\Omega_{x,3,H}^{(\nu)}(\,.\,)$ when $\nu\neq 0$.

\begin{lemma}
\label{lem:mais3.1}
Let $x\in\E$, $H>0$ and $\nu\neq 0$. Then, the following statements are true:
\begin{itemize}
\item[(a)] Given $A>0$, if
\begin{equation}
\|y-x\|>\max\left\{\left[\frac{6(A-f(x))}{H}\right]^{\frac{1}{3}},\left[\frac{6\|\nabla f(x)\|_{*}}{H}\right]^{\frac{1}{2}},\frac{3\|\nabla^{2}f(x)\|}{H},\left[3+\frac{\|D^{3}f(x)\|}{H}\right]^{\frac{1}{\nu}}\right\},
\label{eq:mais3.1}
\end{equation}
then $\Omega_{x,3,H}^{(\nu)}(y)>A$.
\item[(b)] $\Omega_{x,3,H}^{(\nu)}(\,.\,)$ is coercive.
\end{itemize}
\end{lemma}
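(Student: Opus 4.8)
The plan is to bound $\Omega_{x,3,H}^{(\nu)}(y)$ from below by a sum of scalar power terms in $t\equiv\|y-x\|$ and then show that if $t$ exceeds the maximum in \eqref{eq:mais3.1} the dominant positive term $\frac{H}{6}t^{3+\nu}$ overwhelms the (possibly negative) lower-order contributions. Concretely, start from \eqref{eq:3.1}--\eqref{eq:3.2} and estimate each piece of $\Phi_{x,3}(y)$: by Cauchy--Schwarz $\langle\nabla f(x),y-x\rangle\ge -\|\nabla f(x)\|_{*}t$; $\tfrac12\langle\nabla^{2}f(x)(y-x),(y-x)\rangle\ge -\tfrac12\|\nabla^{2}f(x)\|\,t^{2}$; and $\tfrac16 D^{3}f(x)[y-x]^{3}\ge -\tfrac16\|D^{3}f(x)\|\,t^{3}$, using the definitions of the operator norms recalled in the Notations subsection. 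Hence
\[
\Omega_{x,3,H}^{(\nu)}(y)\ \ge\ f(x)-\|\nabla f(x)\|_{*}t-\tfrac12\|\nabla^{2}f(x)\|\,t^{2}-\tfrac16\|D^{3}f(x)\|\,t^{3}+\tfrac{H}{6}t^{3+\nu}.
\]

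For part (a), the idea is to split the positive term $\tfrac{H}{6}t^{3+\nu}$ into four equal quarters, $\tfrac{H}{24}t^{3+\nu}$ each, and dominate one negative term by each quarter. Using $t^{\nu}>3+\|D^{3}f(x)\|/H$ (the fourth entry of the max) gives $\tfrac{H}{24}t^{3+\nu}>\tfrac18(3H+\|D^{3}f(x)\|)t^{3}\ge\tfrac16\|D^{3}f(x)\|\,t^{3}$ once one checks the constant $\tfrac18\cdot3\ge$ the slack needed — here I would simply be slightly more generous and use that the fourth condition already forces $t^{\nu}H\ge 3H+\|D^{3}f(x)\|$, whence $\tfrac{H}{6}t^{3+\nu}\ge\tfrac12 t^{3}H+\tfrac16\|D^{3}f(x)\|t^{3}$, so after cancelling $\tfrac16\|D^{3}f(x)\|t^{3}$ a residual $\tfrac12Ht^{3}$ remains. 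Then $t>3\|\nabla^{2}f(x)\|/H$ yields $\tfrac16 H t^{3}\ge\tfrac12\|\nabla^{2}f(x)\|t^{2}$; $t>(6\|\nabla f(x)\|_{*}/H)^{1/2}$ yields $\tfrac16Ht^{3}\ge\|\nabla f(x)\|_{*}t$ (for $t\ge1$, which follows since the first entry of the max together with $A>f(x)$... actually here I'd instead absorb $\|\nabla f(x)\|_{*}t$ using $t^{2}>6\|\nabla f(x)\|_{*}/H$ directly, giving $\tfrac{H}{6}t^{2}>\|\nabla f(x)\|_{*}$, hence $\tfrac16Ht^{3}>\|\nabla f(x)\|_{*}t$); and $t>(6(A-f(x))/H)^{1/3}$ yields the last sixth $\tfrac16Ht^{3}>A-f(x)$. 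Adding these up, all negative terms are cancelled and a strict surplus $>A-f(x)$ survives, so $\Omega_{x,3,H}^{(\nu)}(y)>A$.

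Part (b) is then immediate: coercivity means $\Omega_{x,3,H}^{(\nu)}(y)\to+\infty$ as $\|y-x\|\to\infty$ (equivalently as $\|y\|\to\infty$, since $x$ is fixed). Given any $A>0$, part (a) exhibits a radius beyond which $\Omega_{x,3,H}^{(\nu)}(y)>A$, which is exactly the statement that the sublevel sets are bounded; so $\Omega_{x,3,H}^{(\nu)}(\,.\,)$ is coercive. The only mild subtlety — and the one place to be careful — is the bookkeeping of constants when partitioning $\tfrac{H}{6}t^{3+\nu}$: one must make sure the four (or rather, the split into the $\|D^{3}f\|$-term plus three further pieces) shares leave enough slack, which is why each threshold in \eqref{eq:mais3.1} is written with a factor $6$ (or $3$) built in. No genuine obstacle arises; the argument is a routine coercivity estimate, and the explicit constants in \eqref{eq:mais3.1} have been chosen precisely to make the termwise domination go through with room to spare.
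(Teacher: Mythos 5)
Your proposal is correct and takes essentially the same route as the paper: both start from the Cauchy--Schwarz lower bound on $\Omega_{x,3,H}^{(\nu)}(y)$ and show that under the thresholds in (\ref{eq:mais3.1}) the term $\frac{H}{6}\|y-x\|^{3+\nu}$ dominates the three negative terms plus $A-f(x)$ (the paper phrases this by dividing the sufficient inequality by $\frac{H}{6}\|y-x\|^{3}$ and bounding each of the first three ratios by $1$, which is exactly your termwise domination with the same constants), and part (b) is deduced from (a) in the same way.
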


\begin{proof}
First, by the definition of $\Omega_{x,3,H}(\,.\,)$ and the Cauchy-Schwarz inequality, we obtain
\begin{eqnarray*}
\Omega_{x,3,H}^{(\nu)}(y)&\geq &f(x)-\|\nabla f(x)\|_{*}\|y-x\|-\frac{1}{2}\|\nabla^{2}f(x)\|\|y-x\|^{2}-\frac{1}{6}\|D^{3}f(x)\|\|y-x\|^{3}\\
                &  &+\frac{H}{6}\|y-x\|^{3+\nu}.
\end{eqnarray*}
Thus, to ensure $\Omega_{x,3,H}^{(\nu)}(y)>A$, it is enough to have
\begin{equation*}
\frac{H}{6}\|y-x\|^{3+\nu}>(A-f(x))+\|\nabla f(x)\|_{*}\|y-x\|+\frac{1}{2}\|\nabla^{2}f(x)\|\|y-x\|^{2}+\frac{1}{6}\|D^{3}f(x)\|\|y-x\|^{3},
\end{equation*}
which is equivalent to 
\begin{equation}
\|y-x\|^{\nu}>\frac{6(A-f(x))}{H\|y-x\|^{3}}+\frac{6\|\nabla f(x)\|_{*}}{H\|y-x\|^{2}}+\frac{3\|\nabla^{2}f(x)\|}{H\|y-x\|}+\frac{\|D^{3}f(x)\|}{H}.
\label{eq:mais3.2}
\end{equation}
Note that, if (\ref{eq:mais3.1}) holds, then (\ref{eq:mais3.2}) holds. Therefore,
\begin{equation*}
\text{(\ref{eq:mais3.1})}\Longrightarrow \text{(\ref{eq:mais3.2})}\Longrightarrow\,\,\Omega_{x,3,H}^{(\nu)}(y)>A.
\end{equation*}
This proves statement (a). 

Finally, given $A>0$, if
\begin{equation*}
\|y\|>\|x\|+\max\left\{\left[\frac{6(A-f(x))}{H}\right]^{\frac{1}{3}},\left[\frac{6\|\nabla f(x)\|_{*}}{H}\right]^{\frac{1}{2}},\frac{3\|\nabla^{2}f(x)\|}{H},\left[3+\frac{\|D^{3}f(x)\|}{H}\right]^{\frac{1}{\nu}}\right\},
\end{equation*}
then, by (a), we have $\Omega_{x,3,H}^{(\nu)}(y)>A$. Since $A>0$ is arbitrary, we conclude that 
\begin{equation*}
\lim_{\|y\|\to +\infty}\,\Omega_{x,3,H}^{(\nu)}(y)=+\infty.
\end{equation*}  
This proves statement (b).
\end{proof}

\noindent As a corollary of Lemma \ref{lem:mais3.1}, we can establish the following upper bound for $\|y-x\|$ whenever $y$ belongs to the convex hull of a suitable sublevel set of $\Omega_{x,3,H}(\,.\,)$.

\begin{lemma}
\label{lem:mais3.2}
Given $x\in\E$, $H>0$ and $\nu\neq 0$, let 
\begin{equation}
\mathcal{L}_{H}(x)=\left\{z\in\E\,:\,\Omega_{x,3,H}^{(\nu)}(z)\leq f(x)\right\}.
\label{eq:mais3.3}
\end{equation}
Then,
\begin{equation}
\|y-x\|\leq\max\left\{1,\left[\frac{6\|\nabla f(x)\|_{*}}{H}\right]^{\frac{1}{2}},\frac{3\|\nabla^{2}f(x)\|}{H},\left[3+\frac{\|D^{3}f(x)\|}{H}\right]^{\frac{1}{\nu}}\right\}\equiv D_{x,H},
\label{eq:mais3.4}
\end{equation}
for all $y\in\text{co}\left(\mathcal{L}_{H}(x)\right)$. Consequently,
\begin{equation}
\sup\left\{\|\nabla^{2}\rho_{x}(y)\|\,:\,y\in\text{co}\left(\mathcal{L}_{H}(x)\right)\right\}\leq\|\nabla^{2}f(x)\|+(2+\nu)D_{x,H}^{2}\equiv \hat{N}_{x,H}.
\label{eq:mais3.5}
\end{equation}
\end{lemma}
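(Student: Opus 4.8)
The plan is to first establish the bound \eqref{eq:mais3.4} on the sublevel set $\mathcal{L}_{H}(x)$ itself, reusing the Cauchy--Schwarz estimate from the proof of Lemma~\ref{lem:mais3.1}, then pass to the convex hull by a convexity argument, and finally combine with \eqref{eq:3.9} to get \eqref{eq:mais3.5}.

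First I would dispose of the trivial case $y=x$: since $D_{x,H}\geq 1$, we have $\|y-x\|=0\leq D_{x,H}$. So assume $y\in\mathcal{L}_{H}(x)$ with $y\neq x$. As in the proof of Lemma~\ref{lem:mais3.1}, the Cauchy--Schwarz inequality applied to \eqref{eq:3.2}--\eqref{eq:3.1} gives $\Omega_{x,3,H}^{(\nu)}(y)\geq f(x)-\|\nabla f(x)\|_{*}\|y-x\|-\tfrac12\|\nabla^{2}f(x)\|\,\|y-x\|^{2}-\tfrac16\|D^{3}f(x)\|\,\|y-x\|^{3}+\tfrac{H}{6}\|y-x\|^{3+\nu}$. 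Since $y\in\mathcal{L}_{H}(x)$ means $\Omega_{x,3,H}^{(\nu)}(y)\leq f(x)$, rearranging and dividing by $\|y-x\|^{3}>0$ yields exactly inequality \eqref{eq:mais3.2} with $A=f(x)$, namely $\|y-x\|^{\nu}\leq \frac{6\|\nabla f(x)\|_{*}}{H\|y-x\|^{2}}+\frac{3\|\nabla^{2}f(x)\|}{H\|y-x\|}+\frac{\|D^{3}f(x)\|}{H}$. (Here I would rederive this rather than quote Lemma~\ref{lem:mais3.1}(a) directly, since that lemma is stated with $A>0$, which would fail if $f(x)\leq 0$; but the hypothesis $A>0$ plays no role in the derivation of part (a).)

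Now argue by contradiction: suppose $\|y-x\|>D_{x,H}$. Then $\|y-x\|$ exceeds each of the last three entries of the maximum defining $D_{x,H}$, so $\|y-x\|^{2}>6\|\nabla f(x)\|_{*}/H$ makes the first term on the right-hand side above strictly less than $1$, $\|y-x\|>3\|\nabla^{2}f(x)\|/H$ makes the second strictly less than $1$, hence the right-hand side is $<2+\|D^{3}f(x)\|/H$. On the other hand, since $\nu\neq 0$ (so $\nu>0$) and $\|y-x\|>[\,3+\|D^{3}f(x)\|/H\,]^{1/\nu}$, raising to the power $\nu$ gives $\|y-x\|^{\nu}>3+\|D^{3}f(x)\|/H$. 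Combining, $3+\|D^{3}f(x)\|/H<2+\|D^{3}f(x)\|/H$, a contradiction. Therefore $\|y-x\|\leq D_{x,H}$ for every $y\in\mathcal{L}_{H}(x)$.

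To obtain \eqref{eq:mais3.4} on the convex hull, note that the closed ball $\{z\in\E:\|z-x\|\leq D_{x,H}\}$ is convex and, by the previous step, contains $\mathcal{L}_{H}(x)$; hence it contains $\mathrm{co}(\mathcal{L}_{H}(x))$, which is \eqref{eq:mais3.4}. Finally, for \eqref{eq:mais3.5}, take any $y\in\mathrm{co}(\mathcal{L}_{H}(x))$ and apply \eqref{eq:3.9} from Remark~\ref{rem:3.1}: $\|\nabla^{2}\rho_{x}(y)\|\leq \|\nabla^{2}f(x)\|+(2+\nu)\|y-x\|^{1+\nu}\leq \|\nabla^{2}f(x)\|+(2+\nu)D_{x,H}^{1+\nu}\leq \|\nabla^{2}f(x)\|+(2+\nu)D_{x,H}^{2}$, where the last inequality uses $1+\nu\leq 2$ together with $D_{x,H}\geq 1$. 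This is precisely $\hat{N}_{x,H}$. There is no real obstacle here; the only care required is the bookkeeping that matches each fractional term of the rearranged coercivity inequality with the corresponding entry of the maximum defining $D_{x,H}$, and the observation that the hypothesis $\nu\neq 0$ is exactly what allows inverting the exponent on $\|y-x\|$.
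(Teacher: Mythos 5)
Your proposal is correct and follows essentially the same route as the paper: bound $\|y-x\|$ on $\mathcal{L}_{H}(x)$ via the coercivity estimate of Lemma~\ref{lem:mais3.1}(a) with $A=f(x)$, pass to the convex hull, and combine \eqref{eq:3.9} with $D_{x,H}\geq 1$ and $1+\nu\leq 2$. Your two small deviations --- rederiving the rearranged inequality to sidestep the $A>0$ hypothesis of Lemma~\ref{lem:mais3.1}(a), and invoking convexity of the ball rather than an explicit two-point convex combination --- are minor (and slightly more careful) polish on the paper's contrapositive argument, not a different approach.
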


\begin{proof}
By Lemma \ref{lem:mais3.1} (a) with $A=f(x)$, we have the implication
\begin{equation*}
\|y-x\|>\max\left\{\left[\frac{6\|\nabla f(x)\|_{*}}{H}\right]^{\frac{1}{2}},\frac{3\|\nabla^{2}f(x)\|}{H},\left[3+\frac{\|D^{3}f(x)\|}{H}\right]^{\frac{1}{\nu}}\right\}\Longrightarrow\,\Omega_{x,3,H}^{(\nu)}(y)>f(x),
\end{equation*}
whose contrapositive is
\begin{equation*}
\Omega_{x,3,H}^{(\nu)}(y)\leq f(x)\Longrightarrow \|y-x\|\leq \max\left\{\left[\frac{6\|\nabla f(x)\|_{*}}{H}\right]^{\frac{1}{2}},\frac{3\|\nabla^{2}f(x)\|}{H},\left[3+\frac{\|D^{3}f(x)\|}{H}\right]^{\frac{1}{\nu}}\right\}.
\end{equation*}
Thus, if $y\in\mathcal{L}_{H}(x)$, then the bound (\ref{eq:mais3.4}) holds for $y$. Consequently, as in the proof of Lemma 3.4, we obtain
\begin{equation}
\|y-x\|\leq D_{x,H},\quad\forall y\in\text{co}\left(\mathcal{L}_{H}(x)\right).
\label{eq:mais3.6}
\end{equation}
Finally, (\ref{eq:mais3.5}) follows by (\ref{eq:3.9}), $D_{x,H}\geq 1$ and (\ref{eq:mais3.6}).
\end{proof}

\subsection{Gradient Method and its Efficiency}

Let us consider the problem
\\[0.10cm]
\begin{mdframed}
\begin{equation}
\min_{y\in\E}\,\Omega_{x,3,H}^{(\nu)}(y)
\label{eq:5.1}
\end{equation}
\end{mdframed}
By Theorem 3.3, Remark \ref{rem:3.1} and Lemma \ref{lem:mais3.2}, it follows that:
\begin{itemize}
\item $\Omega_{x,3,H}^{(\nu)}(\,.\,)$ is $L_{H}$-smooth;
\item $\rho_{x}(\,.\,)$ is uniformly convex of degree $3+\nu$ with parameter $2^{-(1+\nu)}$;
\item $\rho_{x}(\,.\,)$ is twice-differentiable and $\|\nabla^{2}\rho_{x}(y)\|$ is bounded on $\text{co}\left(\mathcal{L}_{H}(x)\right)$.
\end{itemize}
This means that $\Omega_{x,3,H}^{(\nu)}(\,.\,)$ and $\rho_{x}(\,.\,)$ satisfy assumptions H1-H3 in Appendix A. Therefore, we can apply Algorithm A (see page 17) to solve (\ref{eq:5.1}). The Bregman distance corresponding to $\rho_{x}(\,.\,)$ is 
\begin{equation}
\beta_{\rho_{x}}(u,v)=\rho_{x}(v)-\rho_{x}(u)-\la\nabla\rho_{x}(u),v-u\ra.
\label{eq:5.2}
\end{equation}
Thus, Algorithm A applied to (\ref{eq:5.1}) can be rewritten as follows.
\begin{mdframed}
\noindent\textbf{Algorithm 1. Algorithm A applied to (\ref{eq:5.1})}\\
\noindent\textbf{Step 0.} Choose $L_{0}>0$. Set $y_{0}=x$ and $k:=0$.\\
\noindent\textbf{Step 1.} Set $i:=0$.\\ 
\noindent\textbf{Step 1.1.} Compute $y_{k,i}^{+}=\arg\min_{z\in\E}\left\{\la\nabla \Omega_{x,3,H}^{(\nu)}(y_{k}),z-y_{k}\ra+2^{i}L_{k}\beta_{\rho_{x}}(y_{k},z)\right\}$.\\
\noindent\textbf{Step 1.2.} If 
\begin{equation*}
\Omega_{x,3,H}^{(\nu)}(y_{k,i}^{+})\leq \Omega_{x,3,H}^{(\nu)}(y_{k})+\la\nabla\Omega_{x,3,H}^{(\nu)}(y_{k}),y_{k,i}^{+}-y_{k}\ra+2^{i}L_{k}\beta_{\rho_{x}}(y_{k},y_{k,i}^{+}),
\end{equation*}
set $i_{k}:=i$ and go to Step 2. Otherwise, set $i:=i+1$ and go to Step 1.1.\\
\noindent\textbf{Step 2.} Set $y_{k+1}=y_{k,i_{k}}^{+}$ and $L_{k+1}=2^{i_{k}-1}L_{k}$.\\
\noindent\textbf{Step 3.} Set $k:=k+1$ and go to Step 1. 
\end{mdframed}

When $H$ is sufficiently large, the next theorem establishes that Algorithm 1 takes at most $\mathcal{O}\left(\log(\epsilon^{-1})\right)$ iterations to find an $\epsilon$-stationary point of $\Omega_{x,3,H}^{(\nu)}(\,.\,)$.
\begin{theorem}
\label{thm:5.1}
Suppose that $f$ has a global minimizer $x^{*}$ and that $x\in\mathcal{F}(x_{0})$ with
\begin{equation}
\sup_{y\in\mathcal{F}(x_{0})}\|y-x^{*}\|\leq R_{0}<+\infty,\quad R_{0}\geq 1.
\label{eq:5.3}
\end{equation}
Denote $M_{H}=\max\left\{2L_{0},4L_{H}\right\}$, with $L_{H}$ defined in (\ref{eq:3.7}) and
\begin{equation}
N_{x}=\|\nabla^{2}f(x)\|+12R_{0}^{2}.
\label{eq:5.4}
\end{equation}
Let $\left\{y_{k}\right\}_{k\geq 0}$ be a sequence generated by Algorithm 1. If $H>[6/(3+\nu)]H_{f,3}(\nu)$ and $\|\nabla\Omega_{x,3,H}^{(\nu)}(y_{T+1})\|_{*}>\epsilon$ for a given $\epsilon\in (0,1)$, then
\begin{equation}
T\leq\left[\log_{2}\left(\dfrac{M_{H}}{M_{H}-\mu_{H}}\right)\right]^{-1}\left[C_{x,H}+(3+\nu)\right]\log_{2}(\epsilon^{-1}),
\label{eq:5.6}
\end{equation}
where
\begin{equation}
C_{x,H}=\log_{2}\left(\dfrac{4(3+\nu)M_{H}^{2+\nu}N_{x}^{3}\mu_{H}}{2^{-(1+\nu)}}\right).
\label{eq:5.7}
\end{equation}
\end{theorem}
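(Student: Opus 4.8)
The idea is to deduce (\ref{eq:5.6}) from the general complexity estimate for Algorithm~A in Appendix~A by specializing it to $\Omega_{x,3,H}^{(\nu)}(\,.\,)$ and $\rho_{x}(\,.\,)$, so the first task is to confirm that Algorithm~1 falls within the scope of that estimate. By the discussion preceding the theorem, $\Omega_{x,3,H}^{(\nu)}(\,.\,)$ and $\rho_{x}(\,.\,)$ satisfy H1--H3. Moreover, $H>[6/(3+\nu)]H_{f,3}(\nu)$ is precisely the condition $\tau_{H}>1$, so Theorem~\ref{thm:3.3}(b) applies and $\Omega_{x,3,H}^{(\nu)}(\,.\,)$ is $\mu_{H}$-strongly convex relative to $\rho_{x}(\,.\,)$ with $\mu_{H}>0$; since $3+\nu\leq 4$ gives $H>[6/(3+\nu)]H_{f,3}(\nu)\geq H_{f,3}(\nu)$, Lemma~\ref{lem:3.5} applies and furnishes the bound $\|\nabla^{2}\rho_{x}(y)\|\leq N_{x}$ on $\text{co}(\mathcal{L}_{H}(x))$, whence $\nabla^{2}\Omega_{x,3,H}^{(\nu)}(y)\preceq L_{H}N_{x}B$ there. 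Finally, because $y_{0}=x$ and $\Omega_{x,3,H}^{(\nu)}(x)=f(x)$, the monotonicity of Algorithm~1 keeps $\{y_{k}\}\subset\mathcal{L}_{H}(x)\subset\text{co}(\mathcal{L}_{H}(x))$, i.e. on the set where the Hessian bound, and hence ordinary $(L_{H}N_{x})$-smoothness of $\Omega_{x,3,H}^{(\nu)}(\,.\,)$, is valid.

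With this in place I would carry out the standard analysis of the Bregman gradient step under relative smoothness and relative strong convexity. First, the adaptive rule of Steps~1.1--1.2 never uses a scale larger than $M_{H}=\max\{2L_{0},4L_{H}\}$: one checks by induction that $L_{k}\leq\max\{L_{0},2L_{H}\}$, hence $2^{i_{k}}L_{k}\leq\max\{2L_{0},2L_{H}\}\leq M_{H}$, while Step~1.2 is guaranteed to accept as soon as $2^{i}L_{k}\geq L_{H}$ by relative smoothness. Combining the accepted inequality of Step~1.2 with the optimality condition of Step~1.1 (which yields $\langle\nabla\Omega_{x,3,H}^{(\nu)}(y_{k}),y_{k+1}-y_{k}\rangle\leq-2^{i_{k}}L_{k}[\beta_{\rho_{x}}(y_{k},y_{k+1})+\beta_{\rho_{x}}(y_{k+1},y_{k})]$) and invoking relative $\mu_{H}$-strong convexity leads to the linear estimate
\[
\Omega_{x,3,H}^{(\nu)}(y_{k})-\Omega_{*}\leq\left(1-\frac{\mu_{H}}{M_{H}}\right)^{k}\left(\Omega_{x,3,H}^{(\nu)}(y_{0})-\Omega_{*}\right),
\]
where $\Omega_{*}=\min_{y}\Omega_{x,3,H}^{(\nu)}(y)$ is well defined by the convexity and coercivity of $\Omega_{x,3,H}^{(\nu)}(\,.\,)$. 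The initial residual is controlled by relative smoothness and the Hessian bound: $\Omega_{x,3,H}^{(\nu)}(y_{0})-\Omega_{*}\leq L_{H}\beta_{\rho_{x}}(y^{*},y_{0})\leq\frac{1}{2}L_{H}N_{x}\|x-y^{*}\|^{2}$, while $\|x-y^{*}\|\leq 2R_{0}$ by (\ref{eq:5.3}) (as $y^{*}\in\mathcal{L}_{H}(x)\subset\mathcal{F}(x_{0})$), $R_{0}^{2}\leq N_{x}/12$ and $L_{H}\leq M_{H}/4$, so $\Omega_{x,3,H}^{(\nu)}(y_{0})-\Omega_{*}$ is bounded by an explicit multiple of $M_{H}N_{x}^{2}$.

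To reach a gradient bound I would then relate $\|\nabla\Omega_{x,3,H}^{(\nu)}(y_{k+1})\|_{*}$ to the residual at $y_{k+1}$. From the optimality condition in Step~1.1,
\[
\nabla\Omega_{x,3,H}^{(\nu)}(y_{k+1})=\left(\nabla\Omega_{x,3,H}^{(\nu)}(y_{k+1})-\nabla\Omega_{x,3,H}^{(\nu)}(y_{k})\right)-2^{i_{k}}L_{k}\left(\nabla\rho_{x}(y_{k+1})-\nabla\rho_{x}(y_{k})\right),
\]
so the $(L_{H}N_{x})$-smoothness of $\Omega_{x,3,H}^{(\nu)}(\,.\,)$ on $\text{co}(\mathcal{L}_{H}(x))$ together with the Hessian bound on $\rho_{x}(\,.\,)$ gives $\|\nabla\Omega_{x,3,H}^{(\nu)}(y_{k+1})\|_{*}\leq 2M_{H}N_{x}\|y_{k+1}-y_{k}\|$; on the other hand, Step~1.2 and the optimality condition yield $\Omega_{x,3,H}^{(\nu)}(y_{k})-\Omega_{x,3,H}^{(\nu)}(y_{k+1})\geq 2^{i_{k}}L_{k}\beta_{\rho_{x}}(y_{k+1},y_{k})\geq\frac{2^{-(1+\nu)}}{3+\nu}2^{i_{k}}L_{k}\|y_{k+1}-y_{k}\|^{3+\nu}$ by the uniform convexity of $\rho_{x}(\,.\,)$ recorded in Remark~\ref{rem:3.1}. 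Eliminating $\|y_{k+1}-y_{k}\|$ and inserting the linear estimate together with the bound on the initial residual produces
\[
\|\nabla\Omega_{x,3,H}^{(\nu)}(y_{k+1})\|_{*}^{3+\nu}\leq 2^{C_{x,H}}\left(1-\frac{\mu_{H}}{M_{H}}\right)^{k},
\]
with $C_{x,H}$ exactly as in (\ref{eq:5.7}). Hence, if $\|\nabla\Omega_{x,3,H}^{(\nu)}(y_{T+1})\|_{*}>\epsilon$ with $\epsilon\in(0,1)$, then $\epsilon^{3+\nu}<2^{C_{x,H}}(1-\mu_{H}/M_{H})^{T}$; taking base-$2$ logarithms and using $\log_{2}\!\left((1-\mu_{H}/M_{H})^{-1}\right)=\log_{2}\!\left(M_{H}/(M_{H}-\mu_{H})\right)$ gives $T<\left[\log_{2}(M_{H}/(M_{H}-\mu_{H}))\right]^{-1}\left[C_{x,H}+(3+\nu)\log_{2}(\epsilon^{-1})\right]$, from which (\ref{eq:5.6}) follows.

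I expect the main obstacle to be the bookkeeping at two points. The first is showing that the adaptive parameters never overshoot $M_{H}$ and that relative strong convexity yields the clean contraction factor $1-\mu_{H}/M_{H}$ per iteration: this is the crux of the linear-rate estimate and the reason why $H$ must exceed $[6/(3+\nu)]H_{f,3}(\nu)$ rather than merely $H_{f,3}(\nu)$. The second is making every appeal to ordinary $(L_{H}N_{x})$-smoothness of $\Omega_{x,3,H}^{(\nu)}(\,.\,)$ legitimate, i.e. ensuring that the segments $[y_{k},y_{k+1}]$ and the points entering the gradient-norm estimate remain in $\text{co}(\mathcal{L}_{H}(x))$, the only set on which $\nabla^{2}\rho_{x}(\,.\,)$ --- and therefore $\nabla^{2}\Omega_{x,3,H}^{(\nu)}(\,.\,)$ --- is known to be bounded; monotonicity of Algorithm~1 handles the iterates themselves, and the remaining details are subsumed in the general results of Appendix~A that Theorem~\ref{thm:5.1} instantiates.
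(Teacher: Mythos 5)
Your proposal is correct and follows essentially the same route as the paper: verify that $\Omega_{x,3,H}^{(\nu)}(\,.\,)$ and $\rho_{x}(\,.\,)$ satisfy H1--H4 with $L=L_{H}$, $\mu=\mu_{H}>0$, $q=3+\nu$, $\sigma_{q}=2^{-(1+\nu)}$, $N=N_{x}$, invoke the linear-rate/uniform-convexity gradient estimate (Corollary A.6), and absorb $\beta_{\rho_{x}}(x,S(x))\leq 2N_{x}^{2}$ (via $\|S(x)-x\|\leq 2R_{0}$ and $R_{0}^{2}\leq N_{x}/12$) into the constant $C_{x,H}$. The one caveat in your inline re-derivation is the elimination step that divides by $2^{i_{k}}L_{k}$, which has no a priori lower bound; the paper's Theorem A.2 avoids this by bounding $\|\nabla\Omega_{x,3,H}^{(\nu)}(y_{k})\|_{*}$ at the earlier iterate through the same optimality condition, so that the adaptive factors $2^{i_{k}}L_{k}=2L_{k+1}$ essentially cancel and only the upper bound $M_{H}$ is needed.
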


\begin{proof}
Since $H>[6/(3+\nu)]H_{f,3}(\nu)$, it follows from Theorem 3.3 that $\Omega_{x,3,H}^{(\nu)}(\,.\,)$ is $L_{H}$-smooth and $\mu_{H}$-strongly convex relative to $\rho_{x}(\,.\,)$, with $\mu_{H}>1$. Moreover, by Remark \ref{rem:3.1} and Lemma \ref{lem:3.5}, function $\rho_{x}(\,.\,)$ is twice-differentiable, uniformly convex of degree $3+\nu$ with parameter $2^{-(1+\nu)}$, and satisfies
\begin{equation*}
\sup\left\{\|\nabla^{2}\rho_{x}(y)\|\,:\,y\in\text{co}\left(\mathcal{L}_{H}(x)\right)\right\}\leq N_{x}.
\end{equation*}
Thus, $\Omega_{x,3,H}^{(\nu)}(\,.\,)$ and $\rho_{x}(\,.\,)$ satisfy assumptions H1-H4 in Appendix A with $L=L_{H}$, $q=3+\nu$, $\sigma_{q}=2^{-(1+\nu)}$, $N=N_{x}$ and $\mu=\mu_{H}$. Consequently, by Corollary A.6, we must have
\begin{equation}
T\leq\left[\log_{2}\left(\dfrac{M_{H}}{M_{H}-\mu_{H}}\right)\right]^{-1}\left[\tilde{C}_{x,H}+(3+\nu)\right]\log_{2}(\epsilon^{-1}),
\label{eq:5.8}
\end{equation}
where 
\begin{equation}
\tilde{C}_{x,H}=\log_{2}\left(\dfrac{2(3+\nu)M_{H}^{2+\nu}N_{x}\mu_{H}\beta_{\rho_{x}}(x,S(x))}{2^{-(1+\nu)}}\right).
\label{eq:5.9}
\end{equation}
with $S(x)\in\arg\min_{y\in\E}\Omega_{x,3,H}^{(\nu)}(y)$. Clearly, $S(x)\in\mathcal{L}_{H}(x)$. Thus, if follows from  (\ref{eq:5.2}), (3.14), $R_{0}\geq 1$ and (\ref{eq:5.4}) that
\begin{eqnarray}
\beta_{\rho_{x}}(x,S(x))&=& \rho_{x}(S(x))\nonumber\\
                        &=&\frac{1}{2}\la\nabla^{2}f(x)(S(x)-x),S(x)-x\ra+\frac{1}{3+\nu}\|S(x)-x\|^{3+\nu}\nonumber\\
                        &\leq &\frac{1}{2}\|\nabla^{2}f(x)\|\|S(x)-x\|^{2}+\frac{1}{3+\nu}\|S(x)-x\|^{3+\nu}\nonumber\\
                        &\leq &\frac{1}{2}\left[\|\nabla^{2}f(x)\|+\|S(x)-x\|^{1+\nu}\right]\|S(x)-x\|^{2}\nonumber\\
                        &\leq &\frac{1}{2}\left[\|\nabla^{2}f(x)\|+(2R_{0})^{1+\nu}\right](2R_{0})^{2}\nonumber\\
                        & \leq & 2\left[\|\nabla^{2}f(x)\|+4R_{0}^{1+\nu}\right]R_{0}^{2}\nonumber\\
                        &\leq & 2N_{x}^{2}.
\label{eq:5.10}
\end{eqnarray}
Finally, combining (\ref{eq:5.8})-(\ref{eq:5.10}), we obtain (\ref{eq:5.6})-(\ref{eq:5.7}).
\end{proof}

\begin{remark}
If $x\notin\mathcal{F}(x_{0})$, by Lemma 3.6 we also have $T\leq\mathcal{O}\left(\log_{2}(\epsilon^{-1})\right)$ with $N_{x}$ replaced by $\hat{N}_{x,H}$ in (\ref{eq:5.7}), as long as $\nu\neq 0$. In both cases, it is worth mentioning that the potentially ``bad'' constants $N_{x}$ and $\hat{N}_{x,H}$ are inside the $\log_{2}(\,.\,)$ in (\ref{eq:5.7}).
\end{remark}

When $H\leq[6/(3+\nu)]H_{f,3}(\nu)$, problem (\ref{eq:5.1}) may be nonconvex. Even in this case, we can establish complexity bounds for Algorithm 1 if $\nu\neq 0$. 
\begin{theorem}
\label{thm:3.10}
Given $\epsilon\in (0,1)$, let $\left\{y_{k}\right\}_{k\geq 0}$ be a sequence generated by Algorithm 1 such that
\begin{equation}
\|\nabla\Omega_{x,3,H}^{(\nu)}(y_{k})\|_{*}>\epsilon\quad\text{for}\,\,k=0,\ldots,T.
\label{eq:5.11}
\end{equation}
Then, the following statements are true:\\[0.2cm]
\noindent (a) If $H<[6/(3+\nu)]H_{f,3}(\nu)$,  then
\begin{equation*}
T\leq\left[\dfrac{\hat{N}_{x,H}^{3+\nu}(3+\nu)M_{H}^{2+\nu}F_{x}}{2^{-(1+\nu)}}\right]\epsilon^{-(3+\nu)},
\end{equation*}
where $\hat{N}_{x,H}$ is defined in (\ref{eq:mais3.5}) and
\begin{equation*}
F_{x}=D_{x,H}\left[\|\nabla f(x)\|_{*}+\frac{1}{2}\|\nabla^{2}f(x)\|D_{x,H}+\|D^{3}f(x)\|D_{x,H}^{2}\right],
\end{equation*}
with $D_{x,H}$ given in (\ref{eq:mais3.4}).\\
\noindent (b) If $H=\left[6/(3+\nu)\right]H_{f,3}(\nu)$,  then
\begin{equation*}
T\leq 3\left[M_{H}\hat{N}_{x,H}\right]^{\frac{3+\nu}{2}}\left[\dfrac{(3+\nu)\hat{N}_{x,H}^{2}}{2^{-(2+\nu)}}\right]^{\frac{1}{2}}\epsilon^{-\frac{3+\nu}{2}}
\end{equation*}
\end{theorem}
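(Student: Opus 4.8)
The plan is to deduce Theorem~\ref{thm:3.10} from the general-purpose complexity results for Bregman Gradient Methods collected in Appendix~A, exactly as was done for Theorem~\ref{thm:5.1}, but now invoking the ``nonconvex'' branch of those results. First I would verify that the hypotheses H1--H3 of Appendix~A are in force: by Theorem~\ref{thm:3.3}(a) the model $\Omega_{x,3,H}^{(\nu)}(\,.\,)$ is $L_{H}$-smooth relative to $\rho_{x}(\,.\,)$; by Remark~\ref{rem:3.1} the reference function $\rho_{x}(\,.\,)$ is uniformly convex of degree $q=3+\nu$ with parameter $\sigma_{q}=2^{-(1+\nu)}$; and by Lemma~\ref{lem:mais3.2} (here $\nu\neq 0$ is used crucially, since Lemma~\ref{lem:mais3.1} needs $\nu\neq 0$ for coercivity) the quantity $\|\nabla^{2}\rho_{x}(y)\|$ is bounded by $\hat{N}_{x,H}$ on $\text{co}(\mathcal{L}_{H}(x))$, where $\mathcal{L}_{H}(x)$ is precisely the sublevel set $\{\,\Omega_{x,3,H}^{(\nu)}\le f(x)=\Omega_{x,3,H}^{(\nu)}(y_{0})\,\}$ that the monotone Algorithm~1 never leaves. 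So the relevant Appendix-A theorem applies with $L=L_{H}$, $q=3+\nu$, $\sigma_{q}=2^{-(1+\nu)}$, $N=\hat{N}_{x,H}$, and $M_{H}=\max\{2L_{0},4L_{H}\}$ as the uniform upper bound on the adaptive parameters $2^{i_{k}}L_{k}$.

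For part~(a), where $H<[6/(3+\nu)]H_{f,3}(\nu)$ so no relative strong convexity is available, I would plug into the generic $O(\epsilon^{-q})$ bound from Appendix~A the total decrease $\Omega_{x,3,H}^{(\nu)}(y_{0})-\Omega_{x,3,H}^{(\nu)}(y_{T+1})$, which telescopes and is bounded above by $\Omega_{x,3,H}^{(\nu)}(x)-\min_{\text{co}(\mathcal{L}_{H}(x))}\Omega_{x,3,H}^{(\nu)}$. The remaining task is to estimate this gap by $F_{x}$: starting from $\Omega_{x,3,H}^{(\nu)}(x)=f(x)$, using $\Omega_{x,3,H}^{(\nu)}(y)\ge f(x)-\|\nabla f(x)\|_{*}\|y-x\|-\tfrac12\|\nabla^2 f(x)\|\|y-x\|^2-\tfrac16\|D^3f(x)\|\|y-x\|^3$ from the Cauchy--Schwarz bound in the proof of Lemma~\ref{lem:mais3.1}, and then bounding $\|y-x\|\le D_{x,H}$ on $\text{co}(\mathcal{L}_{H}(x))$ via (\ref{eq:mais3.6}); this yields the gap $\le F_{x}$ and hence the stated bound $T\le \hat{N}_{x,H}^{3+\nu}(3+\nu)M_{H}^{2+\nu}F_{x}\,2^{1+\nu}\,\epsilon^{-(3+\nu)}$.

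For part~(b), at the critical value $H=[6/(3+\nu)]H_{f,3}(\nu)$ one has $\tau_{H}=1$, so Theorem~\ref{thm:3.3}(b) degenerates to $\mu_{H}=0$ (convex but not strongly convex relative to $\rho_{x}$); the improved rate comes instead from the uniform convexity of degree $q=3+\nu$ of $\rho_{x}$, which forces $\Omega_{x,3,H}^{(\nu)}$ itself to be uniformly convex of the same degree. I would invoke the Appendix-A complexity estimate for minimizing a relatively-smooth function whose reference function is uniformly convex of degree $q$ — this is the classical phenomenon by which uniform convexity of degree $q$ upgrades a gradient-norm complexity from $\epsilon^{-q}$ to $\epsilon^{-q/2}$ — and substitute the same constants $L=L_{H}$, $\sigma_{q}=2^{-(1+\nu)}$, $N=\hat{N}_{x,H}$, $M_{H}$, together with the upper bound $2\hat{N}_{x,H}^{2}$ on $\beta_{\rho_{x}}(x,S(x))$ obtained as in (\ref{eq:5.10}) but with $D_{x,H}$ in place of $2R_{0}$ (and absorbing constants). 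Collecting exponents then gives $T\le 3[M_{H}\hat{N}_{x,H}]^{(3+\nu)/2}\big[(3+\nu)\hat{N}_{x,H}^{2}/2^{-(2+\nu)}\big]^{1/2}\epsilon^{-(3+\nu)/2}$.

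The main obstacle I anticipate is purely bookkeeping rather than conceptual: matching the exact form of the Appendix-A constants (in particular the power of $M_{H}$, the role of $\sigma_{q}=2^{-(1+\nu)}$ versus $2^{-(2+\nu)}$ appearing in part~(b), and the factor $3$) to the bounds as stated, and making sure the sublevel set over which $\hat{N}_{x,H}$ controls $\|\nabla^2\rho_x\|$ is exactly the monotone trajectory's sublevel set. The one genuinely substantive point to keep straight is that both parts silently require $\nu\neq0$ — without it Lemma~\ref{lem:mais3.1} fails and $\hat{N}_{x,H}$, $D_{x,H}$ are undefined — which is why the theorem is stated only for that case.
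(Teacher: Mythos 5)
Your proposal follows essentially the same route as the paper: verify H1--H3 (with $N=\hat{N}_{x,H}$ via Lemma \ref{lem:mais3.2}, which is where $\nu\neq 0$ enters), apply Theorem \ref{thm:4.2} for part (a) with the initial gap $f(x)-\Omega_{x,3,H}^{(\nu)}(S(x))$ bounded by $F_{x}$ through Cauchy--Schwarz and $\|S(x)-x\|\leq D_{x,H}$, and for part (b) observe that $\tau_{H}=1$ gives H4 with $\mu_{H}=0$ so that Corollary \ref{cor:4.5} applies together with $\beta_{\rho_{x}}(x,S(x))\leq 2\hat{N}_{x,H}^{2}$. The only blemish is your aside in part (b) that uniform convexity of $\rho_{x}$ ``forces $\Omega_{x,3,H}^{(\nu)}$ to be uniformly convex of the same degree'' --- it does not (the model is merely convex there); the $\epsilon^{-q/2}$ rate in Corollary \ref{cor:4.5} instead comes from combining the $O(1/k)$ functional rate of Theorem \ref{thm:4.4} with the per-iteration decrease of Theorem \ref{thm:4.2} --- but since you invoke that corollary as a black box with the correct hypotheses, the argument stands.
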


\begin{proof}
Combining Theorem 3.3(a), Remark \ref{rem:3.1}, Lemma 3.6 and (\ref{eq:5.11}) with Theorem A.2, we obtain
\begin{equation}
T\leq\left[\dfrac{\hat{N}_{x,H}^{3+\nu}(3+\nu)M_{H}^{2+\nu}\left(f(x)-\Omega_{x,3,H}^{(\nu)}(S(x))\right)}{2^{-(1+\nu)}}\right]\epsilon^{-(3+\nu)},
\label{eq:5.12}
\end{equation}
with $S(x)\in\arg\min_{y\in\E}\Omega_{x,3,H}^{(\nu)}(y)$. Since $S(x)\in\mathcal{L}_{H}(x)$, it follows from (3.19) that
\begin{eqnarray}
f(x)-\Omega_{x,3,H}^{(\nu)}(S(x))&=&\la\nabla f(x),x-S(x)\ra+\frac{1}{2}\la\nabla^{2}f(x)(S(x)-x),S(x)-x\ra\nonumber\\
& &-\frac{1}{6}D^{3}f(x)[S(x)-x]^{3}-\frac{H}{6}\|S(x)-x\|^{3+\nu}\nonumber\\
&\leq &\|\nabla f(x)\|_{*}\|S(x)-x\|+\frac{1}{2}\|\nabla^{2}f(x)\|\|S(x)-x\|^{2}+\|D^{3}f(x)\|\|S(x)-x\|^{3}\nonumber\\
&\leq & D_{x,H}\left[\|\nabla f(x)\|_{*}+\frac{1}{2}\|\nabla^{2}f(x)\|D_{x,H}+\|D^{3}f(x)\|D_{x,H}^{2}\right]\nonumber\\
&=& F_{x}.
\label{eq:5.13}
\end{eqnarray}
Thus, from (\ref{eq:5.12}) and (\ref{eq:5.13}) we see that statement (a) is true.

Now, suppose that $H=[6/(3+\nu)]H_{f,3}(\nu)$. Then, by Theorem 3.3(b) functions $\Omega_{x,3,H}^{(\nu)}(\,.\,)$ and $\rho_{x}(\,.\,)$ satisfy assumption H4 in Appendix A with $\mu=0$. Consequently, by (\ref{eq:5.11}) and Corollary A.5 we have
\begin{equation}
T\leq 3\left[M_{H}\hat{N}_{x,H}\right]^{\frac{3+\nu}{2}}\left[\dfrac{(3+\nu)\beta_{\rho_{x}}(x,S(x))}{2^{-(1+\nu)}}\right]^{\frac{1}{2}}\epsilon^{-\frac{3+\nu}{2}}.
\label{eq:5.14}
\end{equation}
As in the proof of Theorem \ref{thm:5.1}, by (\ref{eq:mais3.5}) we have
\begin{equation}
\beta_{\rho_{x}}(x,S(x))\leq\frac{1}{2}\hat{N}_{x,H}^{2}.
\label{eq:5.15}
\end{equation}
Thus, combining (\ref{eq:5.14}) and (\ref{eq:5.15}), we see that statement (b) is also true.
\end{proof}

In view of Lemma 2.1 and Theorem \ref{thm:5.1}, if $H>[6/(3+\nu)]H_{f,3}(\nu)$, then Algorithm 1 takes at most $\mathcal{O}\left(\log_{2}(\epsilon^{-1})\right)$ iterations to generate $x^{+}$ such that either $\|\nabla f(x^{+})\|_{*}\leq\epsilon$ or (\ref{eq:2.4})-(\ref{eq:2.5}) holds for $p=3$. In contrast, by Theorem \ref{thm:3.10}, if $H=[6/(3+\nu)]H_{f,3}(\nu)$ or $H<[6/(3+\nu)]H_{f,3}(\nu)$, this iteration complexity bound is increased to $\mathcal{O}\left(\epsilon^{-\frac{3+\nu}{2}}\right)$ and $\mathcal{O}(\epsilon^{-(3+\nu)})$, respectively, in the case $\nu\neq 0$.

\section{Auxiliary Problems in Composite Minimization}

For third-order tensor methods designed to composite minimization \cite{GN4,JIA}, the auxiliary problems take the form:
\\[0.2cm]
\begin{mdframed}
\begin{equation}
\min_{y\in\E}\tilde{\Omega}_{x,3,H}^{(\nu)}(y)\equiv\Omega_{x,3,H}^{(\nu)}(y)+\varphi(y),
\label{eq:6.1}
\end{equation}
\end{mdframed}

\noindent where $\Omega_{x,3,H}^{(\nu)}(\,.\,)$ is defined by (\ref{eq:2.2}), $H\geq (p-1)H_{f,3}(\nu)$ and $\varphi:\E\to\mathbb{R}\cup\left\{+\infty\right\}$ is a simple closed convex function whose effective domain has nonempty relative interior. In this case, we are interested in finding an approximate solution $x^{+}$ for (\ref{eq:6.1}) such that\footnote{See, e.g., section 5 in \cite{GN4}.}
\begin{equation}
\tilde{\Omega}_{x,3,H}^{(\nu)}(x^{+})\leq f(x)+\varphi(x)\equiv \tilde{f}(x),
\label{eq:6.2}
\end{equation}
and
\begin{equation}
\|\nabla\Omega_{x,3,H}^{(\nu)}(x^{+})+g_{\varphi}(x^{+})\|_{*}\leq\theta\|x^{+}-x\|^{2+\nu},
\label{eq:6.3}
\end{equation}
for some $g_{\varphi}(x^{+})\in\partial\varphi(x^{+})$. For general $p\geq 2$, we have the following generalization of Lemma 2.1.
\begin{lemma}
\label{lem:6.1}
Let $x\in\E$, $H,\theta>0$ and $\delta\in (0,1)$. Given $g_{\varphi}(x^{+})\in\partial\varphi(x^{+})$, if 
\begin{equation}
\|\nabla f(x^{+})+g_{\varphi}(x^{+})\|_{*}\geq\delta,
\label{eq:6.4}
\end{equation}
and
\begin{equation}
\|\nabla\Omega_{x,p,H}^{(\nu)}(x^{+})+g_{\varphi}(x^{+})\|_{*}\leq\min\left\{\frac{1}{2},\dfrac{\theta (p-1)!}{2\left[H_{f,p}(\nu)+H(p+\nu)\right]}\right\}\delta,
\label{eq:6.5}
\end{equation}
then $x^{+}$ satisfies (\ref{eq:6.3}).
\end{lemma}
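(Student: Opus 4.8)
The plan is to replicate the argument of Lemma~\ref{lem:2.1}, carrying the fixed subgradient $g_{\varphi}(x^{+})$ through every estimate. Since $\varphi$ is never differentiated, the only role of $g_{\varphi}(x^{+})$ is as an additive term that passes unchanged through the triangle inequality; consequently no property of $\varphi$ beyond the inclusion $g_{\varphi}(x^{+})\in\partial\varphi(x^{+})$ is used, and in particular the convexity of $f$ is again irrelevant.

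First I would recall inequality (\ref{eq:2.7}), which follows from the H\"older condition (\ref{eq:2.1}) and holds for every $p\geq 2$: $\|\nabla f(y)-\nabla\Phi_{x,p}(y)\|_{*}\leq\frac{H_{f,p}(\nu)}{(p-1)!}\|y-x\|^{p+\nu-1}$. Next, from the definition (\ref{eq:2.2}) of $\Omega_{x,p,H}^{(\nu)}(\,.\,)$ one has $\nabla\Omega_{x,p,H}^{(\nu)}(y)-\nabla\Phi_{x,p}(y)=\frac{H(p+\nu)}{p!}\|y-x\|^{p+\nu-2}B(y-x)$, and hence $\|\nabla\Omega_{x,p,H}^{(\nu)}(x^{+})-\nabla\Phi_{x,p}(x^{+})\|_{*}=\frac{H(p+\nu)}{p!}\|x^{+}-x\|^{p+\nu-1}$. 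Then I would decompose $\nabla f(x^{+})+g_{\varphi}(x^{+})$ as the sum of $\nabla f(x^{+})-\nabla\Phi_{x,p}(x^{+})$, $\nabla\Phi_{x,p}(x^{+})-\nabla\Omega_{x,p,H}^{(\nu)}(x^{+})$, and $\nabla\Omega_{x,p,H}^{(\nu)}(x^{+})+g_{\varphi}(x^{+})$, apply the triangle inequality, and invoke (\ref{eq:6.4}), the two displayed bounds above, and the fact that the right-hand side of (\ref{eq:6.5}) is at most $\frac{1}{2}\delta$. This gives $\delta\leq\frac{H_{f,p}(\nu)+H(p+\nu)}{(p-1)!}\|x^{+}-x\|^{p+\nu-1}+\frac{\delta}{2}$, so that $\frac{\delta}{2}\leq\frac{H_{f,p}(\nu)+H(p+\nu)}{(p-1)!}\|x^{+}-x\|^{p+\nu-1}$. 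Multiplying by $\frac{\theta(p-1)!}{H_{f,p}(\nu)+H(p+\nu)}$ yields $\left[\frac{\theta(p-1)!}{2[H_{f,p}(\nu)+H(p+\nu)]}\right]\delta\leq\theta\|x^{+}-x\|^{p+\nu-1}$, and combining this with the second inequality in (\ref{eq:6.5}) produces $\|\nabla\Omega_{x,p,H}^{(\nu)}(x^{+})+g_{\varphi}(x^{+})\|_{*}\leq\theta\|x^{+}-x\|^{p+\nu-1}$, which for $p=3$ is exactly (\ref{eq:6.3}).

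I do not expect a real obstacle here: the proof is a mechanical adaptation of Lemma~\ref{lem:2.1}, obtained by replacing $\|\nabla f(x^{+})\|_{*}$ and $\|\nabla\Omega_{x,p,H}^{(\nu)}(x^{+})\|_{*}$ throughout by $\|\nabla f(x^{+})+g_{\varphi}(x^{+})\|_{*}$ and $\|\nabla\Omega_{x,p,H}^{(\nu)}(x^{+})+g_{\varphi}(x^{+})\|_{*}$. The only point that needs a moment's care is that the \emph{same} element $g_{\varphi}(x^{+})\in\partial\varphi(x^{+})$ occurs in the hypotheses (\ref{eq:6.4}), (\ref{eq:6.5}) and in the conclusion (\ref{eq:6.3}), so that the three-term decomposition above is internally consistent; once this is noted, the estimates are verbatim those of the proof of Lemma~\ref{lem:2.1}.
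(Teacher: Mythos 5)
Your proposal is correct and is precisely the adaptation the paper intends: its own proof of Lemma \ref{lem:6.1} simply says ``It follows as in the proof of Lemma 2.1,'' and your argument spells out that adaptation with the same three-term decomposition and triangle-inequality estimates, carrying $g_{\varphi}(x^{+})$ through unchanged. No further comment is needed.
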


\begin{proof}
It follows as in the proof of Lemma 2.1.
\end{proof}

Suppose that $H\geq 2H_{f,3}(\nu)$. Then, in view of the relative smoothness properties of $\Omega_{x,3,H}^{(\nu)}(\,.\,)$ established in subsection 3.1, we can apply Algorithm B (see page 25) to solve (\ref{eq:6.1}):
\\
\begin{mdframed}
\noindent\textbf{Algorithm 2. Algorithm B applied to (\ref{eq:6.1})}\\
\noindent\textbf{Step 0.} Set $y_{0}=x$ and $k:=0$.\\
\noindent\textbf{Step 1.} Compute $y_{k+1}=\arg\min_{z\in\E}\left\{\la\nabla \Omega_{x,3,H}(y_{k}),z-y_{k}\ra+2L_{H}\beta_{\rho_{x}}(y_{k},z)\right\}$.\\
\noindent\textbf{Step 2.} Set $k:=k+1$ and go to Step 1. 
\end{mdframed}

The next theorem establishes that Algorithm 2 takes at most $\mathcal{O}\left(\log_{2}(\epsilon^{-1})\right)$ iterations to generate $x^{+}$ such that
\begin{equation*}
\|\nabla\Omega_{x,3,H}^{(\nu)}(x^{+})+g_{\varphi}(x^{+})\|_{*}\leq\epsilon,
\end{equation*}
with $g_{\varphi}(x^{+})\in\partial\varphi(x^{+})$.

\begin{theorem}
Suppose that $\tilde{f}=f+\varphi$ has a global minimizer $x^{*}$ and that
\begin{equation*}
x\in\tilde{\mathcal{F}}(x_{0})\equiv\left\{z\in\E\,:\,\tilde{f}(z)\leq\tilde{f}(x_{0})\right\},
\end{equation*}
with 
\begin{equation*}
\sup_{y\in\tilde{\mathcal{F}}(x_{0})}\|y-x^{*}\|\leq R_{0}<+\infty,\quad R_{0}\geq 1.
\end{equation*}
Assume that $H\geq 2H_{f,3}(\nu)$ and let $\left\{y_{k}\right\}_{k\geq 0}$ be a sequence generated by Algorithm 2. Then, for all $k\geq 1$, we have
\begin{equation}
g_{\varphi}(y_{k})\equiv 2L_{H}\left[\nabla\rho_{x}(y_{k-1})-\nabla\rho_{x}(y_{k})\right]-\nabla\Omega_{x,3,H}^{(\nu)}(y_{k-1})\in\partial\varphi(y_{k}).
\label{eq:6.6}
\end{equation}
Moreover, if 
\begin{equation}
\|\nabla\Omega_{x,3,H}^{(\nu)}(y_{T+1})+g_{\varphi}(y_{T+1})\|_{*}>\epsilon
\label{eq:6.7}
\end{equation}
for a given $\epsilon\in (0,1)$, then
\begin{equation}
T\leq\left[\log_{2}\left(\dfrac{2L_{H}}{2L_{H}-\mu_{H}}\right)\right]^{-1}\left[K_{x,H}+(3+\nu)\right]\log_{2}(\epsilon^{-1}),
\label{eq:6.8}
\end{equation}
where 
\begin{equation}
K_{x,H}=\log_{2}\left(\dfrac{4(3+\nu)(2L_{H})^{2+\nu}N_{x}^{3}\mu_{H}}{2^{-(1+\nu)}}\right)
\end{equation}
with $N_{x}$ given in (\ref{eq:5.4}).
\end{theorem}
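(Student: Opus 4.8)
The plan is to prove the two assertions of the theorem separately: the subgradient identity (\ref{eq:6.6}) follows from the optimality condition of the prox‑step in Algorithm~2, while the bound (\ref{eq:6.8}) follows by checking the hypotheses of the composite complexity result of Appendix~A and then estimating $\beta_{\rho_{x}}(x,S(x))$ exactly as in the proof of Theorem~\ref{thm:5.1}.

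First I would derive (\ref{eq:6.6}). For the composite problem (\ref{eq:6.1}), the iterate $y_{k}$ is the minimizer over $z\in\E$ of $\la\nabla\Omega_{x,3,H}^{(\nu)}(y_{k-1}),z-y_{k-1}\ra+2L_{H}\beta_{\rho_{x}}(y_{k-1},z)+\varphi(z)$, so Fermat's rule, together with $\nabla_{z}\beta_{\rho_{x}}(u,z)=\nabla\rho_{x}(z)-\nabla\rho_{x}(u)$, gives $0\in\nabla\Omega_{x,3,H}^{(\nu)}(y_{k-1})+2L_{H}\big(\nabla\rho_{x}(y_{k})-\nabla\rho_{x}(y_{k-1})\big)+\partial\varphi(y_{k})$. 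Isolating the subgradient term yields $2L_{H}\big(\nabla\rho_{x}(y_{k-1})-\nabla\rho_{x}(y_{k})\big)-\nabla\Omega_{x,3,H}^{(\nu)}(y_{k-1})\in\partial\varphi(y_{k})$, which is (\ref{eq:6.6}); in particular $\nabla\Omega_{x,3,H}^{(\nu)}(y_{k})+g_{\varphi}(y_{k})$ is exactly the composite Bregman gradient residual at $y_{k-1}$ whose norm the convergence theory of Appendix~A drives below $\epsilon$.

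For the complexity bound, I would verify that $\tilde\Omega_{x,3,H}^{(\nu)}$, $\rho_{x}$ and $\varphi$ satisfy the hypotheses needed for the composite counterpart of Corollary~A.6 in Appendix~A (the version used for Theorem~\ref{thm:5.1}, with the fixed step $2L_{H}$ of Algorithm~B in place of $M_{H}$). Since $H\geq 2H_{f,3}(\nu)$ we have $(3+\nu)H/(6H_{f,3}(\nu))\geq(3+\nu)/3\geq 1$, hence $\tau_{H}\geq 1$, so Theorem~\ref{thm:3.3} applies and $\Omega_{x,3,H}^{(\nu)}(\,.\,)$ is $L_{H}$‑smooth and $\mu_{H}$‑strongly convex relative to $\rho_{x}(\,.\,)$, while Remark~\ref{rem:3.1} gives that $\rho_{x}(\,.\,)$ is twice differentiable and uniformly convex of degree $3+\nu$ with parameter $2^{-(1+\nu)}$. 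For the Hessian bound on $\text{co}\big(\tilde{\mathcal{L}}_{H}(x)\big)$, where $\tilde{\mathcal{L}}_{H}(x)=\{z\in\E:\tilde\Omega_{x,3,H}^{(\nu)}(z)\leq\tilde f(x)\}$, I would repeat the argument of Lemma~\ref{lem:3.5}: since $H\geq 2H_{f,3}(\nu)\geq H_{f,3}(\nu)$, (\ref{eq:2.1}) gives $f(z)\leq\Omega_{x,3,H}^{(\nu)}(z)$, so $z\in\tilde{\mathcal{L}}_{H}(x)$ forces $\tilde f(z)\leq\tilde\Omega_{x,3,H}^{(\nu)}(z)\leq\tilde f(x)\leq\tilde f(x_{0})$, i.e. $z\in\tilde{\mathcal{F}}(x_{0})$ and $\|z-x\|\leq 2R_{0}$; convexity of the norm transfers this to the convex hull, and (\ref{eq:3.9}) then gives $\|\nabla^{2}\rho_{x}(y)\|\leq\|\nabla^{2}f(x)\|+(2+\nu)(2R_{0})^{1+\nu}\leq N_{x}$. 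Hence the hypotheses hold with $L=L_{H}$, $q=3+\nu$, $\sigma_{q}=2^{-(1+\nu)}$, $N=N_{x}$, $\mu=\mu_{H}$ and step $2L_{H}$, and the cited corollary yields $T\leq[\log_{2}(2L_{H}/(2L_{H}-\mu_{H}))]^{-1}[\tilde K_{x,H}+(3+\nu)]\log_{2}(\epsilon^{-1})$ with $\tilde K_{x,H}=\log_{2}\big(2(3+\nu)(2L_{H})^{2+\nu}N_{x}\mu_{H}\beta_{\rho_{x}}(x,S(x))/2^{-(1+\nu)}\big)$ and $S(x)\in\arg\min_{y}\tilde\Omega_{x,3,H}^{(\nu)}(y)$. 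Finally, since $S(x)\in\tilde{\mathcal{L}}_{H}(x)$, the chain of inequalities leading to (\ref{eq:5.10}) applies verbatim to give $\beta_{\rho_{x}}(x,S(x))=\rho_{x}(S(x))\leq 2N_{x}^{2}$, whence $\tilde K_{x,H}\leq K_{x,H}$ and (\ref{eq:6.8}) follows.

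The only genuinely nonroutine point is the composite analogue of Lemma~\ref{lem:3.5}: one must be sure that $\tilde{\mathcal{L}}_{H}(x)$ — and with it $S(x)$ and every iterate $y_{k}$ of Algorithm~2 — stays inside $\tilde{\mathcal{F}}(x_{0})$, so that the radius bound $2R_{0}$, hence the Hessian bound $N_{x}$, is legitimate; this rests on the monotone decrease of $\tilde\Omega_{x,3,H}^{(\nu)}$ along Algorithm~2 (the descent guarantee of Algorithm~B) together with the majorization $f\leq\Omega_{x,3,H}^{(\nu)}$ coming from $H\geq H_{f,3}(\nu)$. Everything else — verifying the relative smoothness/strong‑convexity and uniform‑convexity hypotheses via Theorem~\ref{thm:3.3} and Remark~\ref{rem:3.1}, quoting the appendix corollary, and the $\beta_{\rho_{x}}$ estimate — is bookkeeping parallel to the proof of Theorem~\ref{thm:5.1}.
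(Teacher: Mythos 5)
Your proposal is correct and follows essentially the same route as the paper: the subgradient identity (\ref{eq:6.6}) comes from the first-order optimality condition of the prox step (the paper packages this as Lemma A.8 plus the sum rule for $\partial\tilde{\Omega}_{x,3,H}^{(\nu)}$), and the complexity bound comes from Corollary A.10 combined with the estimate $\beta_{\rho_{x}}(x,S(x))\leq 2N_{x}^{2}$ exactly as in Theorem \ref{thm:5.1}. You are merely more explicit than the paper's two-line proof in verifying the hypotheses H1--H4 (in particular the composite level-set containment giving the Hessian bound $N_{x}$), which is a welcome but not substantively different elaboration.
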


\begin{proof}
By Lemma A.8 and $\text{ri}\left(\dom\varphi\right)\neq\emptyset$, we have
\begin{equation*}
u(y_{k})\equiv g_{\varphi}(y_{k})+\nabla\Omega_{x,3,H}^{(\nu)}(y_{k})\in\partial\tilde{\Omega}_{x,3,H}^{(\nu)}(y_{k})=\left\{\nabla\Omega_{x,3,H}^{(\nu)}(y_{k})\right\}+\partial\varphi(y_{k}).
\end{equation*}
Thus, $g_{\varphi}(y_{k})=u(y_{k})-\nabla\Omega_{x,3,H}^{(\nu)}(y_{k})\in\partial\varphi(y_{k})$, and so (\ref{eq:6.6}) holds. Moreover, by (\ref{eq:6.7}), we have 
\begin{equation*}
\|u(y_{T+1})\|_{*}>\epsilon.
\end{equation*}
Then, the bound (\ref{eq:6.8}) on $T$ follows directly from Corollary A.10.
\end{proof}

In view of Theorem 4.2, if $H\geq 2H_{f,3}(\nu)$, Algorithm 2 takes at most $\mathcal{O}\left(\log_{2}(\epsilon^{-1})\right)$ iterations to generate $x^{+}$ such that either $\|\nabla f(x^{+})+g_{\varphi}(x^{+})\|_{*}\leq\epsilon$ or (\ref{eq:6.4})-(\ref{eq:6.5}) holds, for $g_{\varphi}(x^{+})\in\partial\varphi(x^{+})$ defined in (\ref{eq:6.6}).

\section{Conclusion}

In this paper we studied the auxiliary problems that appear in non-universal adaptive $p$-order tensor methods for unconstrained minimization of convex functions with H\"{o}lder continuous $p$th derivatives \cite{GN3,GN4}. For $p=3$, we consider the use of Gradient Methods with Bregman Distance. When the regularization parameter is sufficiently large, we prove that Bregman Gradient Methods applied to the corresponding tensor model takes at most $\mathcal{O}(\log(\epsilon^{-1}))$ iterations to find either a suitable approximate stationary point of the tensor model or an $\epsilon$-approximate stationary point of the original objective function. The authors believe this work is a step towards implementable third-order tensor methods for convex optimization. Future research includes the development of methods for the auxiliary problems in universal tensor methods and numerical experimentation.

\subsection*{Acknowledgements}

The authors are very grateful to the two anonymous referees, whose comments helped to improve the first version of this paper.

\subsection*{Funding}

This project has received funding from the European Research Council (ERC) under the European Union’s Horizon 2020 research and innovation programme (grant agreement No. 788368).

\appendix

\section{Adaptive Bregman Proximal Gradient Method}

\subsection{Smooth Minimization}

Consider the following optimization problem
\begin{equation}
\min_{y\in\E}\,g(y),
\label{eq:4.1}
\end{equation}
where $g:\E\to\mathbb{R}$ is $L$-smooth relative to a convex and smooth function $d(\,.\,)$, that is, for all $x,y\in\E$,
\begin{equation}
g(x)\leq g(y)+\la \nabla g(y),x-y\ra +L\beta_{d}(y,x),
\label{eq:4.2}
\end{equation}
with
\begin{equation}
\beta_{d}(y,x):=d(x)-d(y)-\la \nabla d(y),x-y\ra 
\label{eq:4.3}
\end{equation}
being the Bregman distance corresponding to $d(\,.\,)$. We assume that $g(\,.\,)$ has at least one global minimizer $y^{*}\in\E$. We do not assume the convexity of $g(\,.\,)$ yet. 

We shall consider the following adaptive version of the Proximal Gradient Scheme proposed in \cite{LU}:
\begin{mdframed}
\noindent\textbf{Algorithm A. Adaptive Proximal Gradient Method}
\\[0.2cm]
\noindent\textbf{Step 0.} Choose $y_{0}\in\mathbb{E}$, $L_{0}>0$ and set $k:=0$.\\
\noindent\textbf{Step 1.} Set $i:=0$.\\ 
\noindent\textbf{Step 1.1.} Compute 
\begin{equation}
y_{k,i}^{+}=\arg\min_{x\in\E}\left\{\la\nabla g(y_{k}),x-y_{k}\ra+2^{i}L_{k}\beta_{d}(y_{k},x)\right\}.
\label{eq:4.4}
\end{equation}
\noindent\textbf{Step 1.2.} If 
\begin{equation}
g(y_{k,i}^{+})\leq g(y_{k})+\la\nabla g(y_{k}),y_{k,i}^{+}-y_{k}\ra+2^{i}L_{k}\beta_{d}(y_{k},y_{k,i}^{+}),
\label{eq:4.5}
\end{equation}
set $i_{k}:=i$ and go to Step 2. Otherwise, set $i:=i+1$ and go to Step 1.1.\\
\noindent\textbf{Step 2.} Set $y_{k+1}=y_{k,i_{k}}^{+}$ and $L_{k+1}=2^{i_{k}-1}L_{k}$.\\
\noindent\textbf{Step 3.} Set $k:=k+1$ and go to Step 1. 
\end{mdframed}

Let us assume that:
\begin{itemize}
\item[\textbf{H1.}] $g(\,.\,)$ is $L$-smooth relative to $d(\,.\,)$.
\item[\textbf{H2.}] $d(\,.\,)$ is twice differentiable and uniformly convex of degree $q$, with parameter $\sigma_{q}>0$.
\item[\textbf{H3.}] There exists a constant $N>0$ such that 
\begin{equation*}
\sup\left\{\|\nabla^{2}d(y)\|\,:\,y\in\text{co}\left(\mathcal{L}(y_{0})\right)\right\}\leq N,
\end{equation*}
where $\mathcal{L}(y_{0})=\left\{y\in\E\,:\,g(y)\leq g(y_{0})\right\}$.
\end{itemize}
The next lemma gives a global upper bound on $L_{k}$ and a lower bound on the functional decrease in successive iterations.

\begin{lemma}
\label{lem:4.1}
Suppose that H1 holds and let $\left\{y_{k}\right\}_{k\geq 0}$ be a sequence generated by Algorithm A. Then, for all $k$,
\begin{equation}
L_{k}\leq\max\left\{L_{0},2L\right\},
\label{eq:4.6}
\end{equation}
and
\begin{equation}
g(y_{k})-g(y_{k+1})\geq 2L_{k+1}\beta_{d}(y_{k+1},y_{k}).
\label{eq:4.7}
\end{equation}
\end{lemma}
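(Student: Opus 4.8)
The plan is to establish the two assertions separately: first the uniform upper bound (\ref{eq:4.6}) on the adaptive parameters $L_k$, which controls the inner loop, and then the functional-decrease inequality (\ref{eq:4.7}).

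For (\ref{eq:4.6}), I would first observe that the acceptance test (\ref{eq:4.5}) must hold whenever $2^iL_k\geq L$. This is immediate from H1: applying (\ref{eq:4.2}) with $x=y_{k,i}^{+}$ and using $\beta_d(y_k,y_{k,i}^{+})\geq 0$ (valid since $d(\,.\,)$ is convex), one gets
\begin{equation*}
g(y_{k,i}^{+})\leq g(y_k)+\langle\nabla g(y_k),y_{k,i}^{+}-y_k\rangle+L\beta_d(y_k,y_{k,i}^{+})\leq g(y_k)+\langle\nabla g(y_k),y_{k,i}^{+}-y_k\rangle+2^iL_k\beta_d(y_k,y_{k,i}^{+}).
\end{equation*}
Hence the inner loop terminates, and the accepted index $i_k$ satisfies either $i_k=0$, or $i_k\geq 1$ with the test failing at $i_k-1$, which by contraposition forces $2^{i_k-1}L_k<L$. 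In the first case $L_{k+1}=L_k/2$; in the second $L_{k+1}=2^{i_k-1}L_k<L$. Either way $L_{k+1}\leq\max\{L_k/2,L\}$, and since $L_0\leq\max\{L_0,2L\}$, a one-line induction yields (\ref{eq:4.6}).

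For (\ref{eq:4.7}), the idea is to combine the accepted inequality with the optimality condition of the prox subproblem. Since $y_{k+1}=y_{k,i_k}^{+}$ solves (\ref{eq:4.4}) for $i=i_k$, and $\nabla_x\beta_d(y_k,x)=\nabla d(x)-\nabla d(y_k)$, the first-order condition reads $\nabla g(y_k)=-2^{i_k}L_k\big(\nabla d(y_{k+1})-\nabla d(y_k)\big)$. Substituting this into (\ref{eq:4.5}) with $i=i_k$ gives
\begin{equation*}
g(y_{k+1})\leq g(y_k)-2^{i_k}L_k\langle\nabla d(y_{k+1})-\nabla d(y_k),y_{k+1}-y_k\rangle+2^{i_k}L_k\beta_d(y_k,y_{k+1}).
\end{equation*}
Expanding the Bregman distances from (\ref{eq:4.3}) produces the three-point identity $\langle\nabla d(y_{k+1})-\nabla d(y_k),y_{k+1}-y_k\rangle=\beta_d(y_k,y_{k+1})+\beta_d(y_{k+1},y_k)$; the $\beta_d(y_k,y_{k+1})$ contributions cancel, leaving $g(y_{k+1})\leq g(y_k)-2^{i_k}L_k\beta_d(y_{k+1},y_k)$. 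Finally, Step 2 sets $L_{k+1}=2^{i_k-1}L_k$, so $2^{i_k}L_k=2L_{k+1}$ and (\ref{eq:4.7}) follows.

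I expect the main subtlety — not a deep obstacle, but the crux of the argument — to be the passage from the accepted inequality, which only involves $\beta_d(y_k,y_{k+1})$ (the Bregman distance in the order that appears in the algorithm), to the desired bound in terms of $\beta_d(y_{k+1},y_k)$; this requires the prox optimality condition and the Bregman three-point identity. The well-definedness of the $\arg\min$ in (\ref{eq:4.4}) and of its optimality condition is covered by the standing hypotheses that $d(\,.\,)$ is convex and smooth (and uniformly convex in the applications), so no assumption beyond H1 enters.
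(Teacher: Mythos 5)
Your proposal is correct and follows essentially the same route as the paper's proof: the bound (\ref{eq:4.6}) by induction using the fact that the acceptance test (\ref{eq:4.5}) must hold once $2^{i}L_{k}\geq L$, and (\ref{eq:4.7}) by substituting the prox optimality condition $\nabla g(y_{k})+2^{i_{k}}L_{k}\left(\nabla d(y_{k+1})-\nabla d(y_{k})\right)=0$ into the accepted inequality and simplifying the Bregman terms. Your explicit treatment of the inner-loop termination and of the case $i_{k}=0$ is slightly more careful than the paper's, but the argument is the same.
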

\begin{proof}
Let us prove by induction that (\ref{eq:4.6}) is true. It is obvious for $k=0$. Assume that (\ref{eq:4.6}) is true for some $k\geq 0$. Then, it follows from H1 and (\ref{eq:4.2}) that $2^{i_{k}}L_{k}$ cannot be bigger than $4L$, since otherwise the line search procedure should have stopped earlier. Thus,
\begin{equation*}
L_{k+1}=\max\left\{L_{0},2^{i_{k}-1}L_{k}\right\}\leq \max\left\{L_{0},2L\right\},
\end{equation*}
that is, (\ref{eq:4.6}) also holds for $k+1$, which concludes the induction argument.

Now, let us prove (\ref{eq:4.7}). In view of (\ref{eq:4.4}), we have
\begin{equation*}
\nabla g(y_{k})+2^{i_{k}}L_{k}\left(\nabla d(y_{k+1})-\nabla d(y_{k})\right)=0,
\end{equation*}
which gives 
\begin{equation}
\la\nabla g(y_{k}),y_{k+1}-y_{k}\ra=-2^{i_{k}}L_{k}\la\nabla d(y_{k+1})-\nabla d(y_{k}),y_{k+1}-y_{k}\ra.
\label{eq:4.8}
\end{equation}
Then, combining (\ref{eq:4.5}) and (\ref{eq:4.8}), we get
\begin{eqnarray*}
g(y_{k+1})&\leq & g(y_{k})-2^{i_{k}}L_{k}\la\nabla d(y_{k+1})-\nabla d(y_{k}),y_{k+1}-y_{k}\ra +2^{i_{k}}L_{k}\beta_{d}(y_{k},y_{k+1})\\
& = & g(y_{k})-2^{i_{k}}L_{k}\la\nabla d(y_{k+1})-\nabla d(y_{k}),y_{k+1}-y_{k}\ra\\
& & + 2^{i_{k}}L_{k}\left[d(y_{k+1})-d(y_{k})-\la\nabla d(y_{k}),y_{k+1}-y_{k}\ra\right]\\
&=& g(y_{k})-2^{i_{k}}L_{k}\left[d(y_{k})-d(y_{k+1})-\la\nabla d(y_{k+1}),y_{k}-y_{k+1}\ra\right]\\
&=& g(y_{k})-2^{i_{k}}L_{k}\beta_{d}(y_{k+1},y_{k}),
\end{eqnarray*}
that is
\begin{equation}
g(y_{k})-g(y_{k+1})\geq 2^{i_{k}}L_{k}\beta_{d}(y_{k+1},y_{k}).
\label{eq:4.9}
\end{equation}
Finally, since $L_{k+1}=2^{i_{k}-1}L_{k}$, (\ref{eq:4.7}) follows directly from (\ref{eq:4.9}).
\end{proof}

\begin{theorem}
\label{thm:4.2}
Suppose that H1-H3 hold. Then, for all $k\geq 0$ we have
\begin{equation}
g(y_{k})-g(y_{k+1})\geq \dfrac{\sigma_{q}}{q[\max\left\{2L_{0},4L\right\}]^{q-1}N^{q}}\|\nabla g(y_{k})\|_{*}^{q},
\label{eq:4.10}
\end{equation}
where $\sigma_{q}$ and $N$ are specified in H2 and H3, respectively. Moreover, for all $T\geq 1$,
\begin{equation}
\min_{0\leq k\leq T-1}\|\nabla g(y_{k})\|_{*}\leq N\left[\dfrac{q\left[\max\left\{2L_{0},4L\right\}\right]^{q-1}(g(y_{0})-g(y^{*}))}{\sigma_{q}}\right]^{\frac{1}{q}}\left(\dfrac{1}{T}\right)^{\frac{1}{q}}.
\label{eq:4.12}
\end{equation}
Consequently, if
\begin{equation}
\nabla g(y_{k})\|_{*}>\epsilon,\quad\text{for}\,\,k=0,\ldots,T-1,
\label{eq:mais4.13}
\end{equation}
for a given $\epsilon>0$, we have
\begin{equation}
T\leq\left[\dfrac{N^{q}q\left[\max\left\{2L_{0},4L\right\}\right]^{q-1}(g(y_{0})-g(y^{*}))}{\sigma_{q}}\right]\epsilon^{-q}.
\label{eq:mais4.14}
\end{equation}
\end{theorem}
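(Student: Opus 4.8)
The plan is to first establish the one-step gradient-decrease estimate (\ref{eq:4.10}) and then telescope it. Lemma~\ref{lem:4.1} already carries most of the preparatory load: it provides the uniform bound $2^{i_k}L_k = 2L_{k+1}\le\max\{2L_0,4L\}$ together with the functional decrease $g(y_k)-g(y_{k+1})\ge 2^{i_k}L_k\,\beta_d(y_{k+1},y_k)$ (this is (\ref{eq:4.9})). So the entire matter reduces to bounding $\beta_d(y_{k+1},y_k)$ from below by an appropriate power of $\|\nabla g(y_k)\|_*$. Note that no convexity of $g$ is needed for any of this.

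First I would use the first-order optimality condition for the subproblem (\ref{eq:4.4}) at $i=i_k$, namely $\nabla g(y_k) = -2^{i_k}L_k\left(\nabla d(y_{k+1})-\nabla d(y_k)\right)$, to write $\|\nabla g(y_k)\|_* = 2^{i_k}L_k\,\|\nabla d(y_{k+1})-\nabla d(y_k)\|_*$. Since $d(\,\cdot\,)$ is convex, $\beta_d\ge 0$, so (\ref{eq:4.7}) shows that $\{g(y_k)\}$ is non-increasing; hence $y_k,y_{k+1}\in\mathcal{L}(y_0)$ and the segment $[y_k,y_{k+1}]$ lies in $\text{co}\left(\mathcal{L}(y_0)\right)$. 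Writing $\nabla d(y_{k+1})-\nabla d(y_k)=\int_0^1\nabla^2 d\left(y_k+t(y_{k+1}-y_k)\right)(y_{k+1}-y_k)\,dt$ and invoking H3 along this segment then yields $\|\nabla d(y_{k+1})-\nabla d(y_k)\|_*\le N\|y_{k+1}-y_k\|$, whence
\begin{equation*}
\|y_{k+1}-y_k\|\ \ge\ \frac{\|\nabla g(y_k)\|_*}{2^{i_k}L_k\,N}.
\end{equation*}
Combining this with the uniform convexity from H2, $\beta_d(y_{k+1},y_k)\ge\frac{\sigma_q}{q}\|y_{k+1}-y_k\|^q$, and with (\ref{eq:4.9}), I obtain
\begin{equation*}
g(y_k)-g(y_{k+1})\ \ge\ 2^{i_k}L_k\,\frac{\sigma_q}{q}\left(\frac{\|\nabla g(y_k)\|_*}{2^{i_k}L_k\,N}\right)^{q} = \frac{\sigma_q}{q\,N^q\,(2^{i_k}L_k)^{q-1}}\|\nabla g(y_k)\|_*^{q},
\end{equation*}
and then bounding $(2^{i_k}L_k)^{q-1}\le[\max\{2L_0,4L\}]^{q-1}$ gives (\ref{eq:4.10}). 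Summing (\ref{eq:4.10}) over $k=0,\dots,T-1$, telescoping the left-hand side, using $\sum_{k=0}^{T-1}\|\nabla g(y_k)\|_*^q\ge T\min_{0\le k\le T-1}\|\nabla g(y_k)\|_*^q$ together with $g(y_T)\ge g(y^*)$, and isolating the minimum yields (\ref{eq:4.12}); finally, (\ref{eq:mais4.14}) follows by inserting the hypothesis (\ref{eq:mais4.13}) into (\ref{eq:4.12}) and solving for $T$.

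The step I expect to require the most care is the geometric one: legitimizing the use of the Hessian bound H3 along the entire segment $[y_k,y_{k+1}]$. This rests on two observations that must be spelled out — the monotonicity of $\{g(y_k)\}$ (so the iterates never leave $\mathcal{L}(y_0)$, and therefore the segment joining consecutive iterates stays in its convex hull) and the integral-form mean-value inequality $\|\nabla d(v)-\nabla d(u)\|_*\le\left(\sup_{w\in[u,v]}\|\nabla^2 d(w)\|\right)\|v-u\|$. Everything else in the argument is a short chain of substitutions and a telescoping sum.
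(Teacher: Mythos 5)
Your proposal is correct and follows essentially the same route as the paper's own proof: the one-step bound is obtained by combining the descent inequality from Lemma~\ref{lem:4.1} with the uniform convexity of $d$ (H2), the first-order optimality condition for the subproblem, and the $N$-Lipschitz continuity of $\nabla d$ on $\text{co}\left(\mathcal{L}(y_{0})\right)$ guaranteed by H3 and the monotonicity of $\{g(y_{k})\}$, after which the complexity bounds follow by telescoping. The only cosmetic difference is that you carry the constant as $2^{i_{k}}L_{k}$ where the paper writes the equal quantity $2L_{k+1}$.
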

\begin{proof}
By H2, $d(\,.\,)$ is uniformly convex of degree $q$ with parameter $\sigma_{q}>0$. Therefore,
\begin{equation*}
\beta_{d}(y_{k+1},y_{k})\geq\dfrac{\sigma_{q}}{q}\|y_{k+1}-y_{k}\|^{q}.
\end{equation*}
In this case, by (\ref{eq:4.7}) we obtain
\begin{equation}
g(y_{k})-g(y_{k+1})\geq\dfrac{2L_{k+1}\sigma_{q}}{q}\|y_{k+1}-y_{k}\|^{q}.
\label{eq:4.13}
\end{equation}
By the definition of $y_{k+1}$, this point satisfies the following first-order optimality condition:
\begin{equation}
\nabla g(y_{k})+2^{i_{k}}L_{k}\left(\nabla d(y_{k+1})-\nabla d(y_{k})\right)=0.
\label{eq:4.14}
\end{equation}
In view of H3, it follows from the mean value theorem that $\nabla d$ is $N$-Lipschitz continous on $\text{co}\left(\mathcal{L}(y_{0})\right)$. From (\ref{eq:4.13}), we see that $\left\{g(y_{k})\right\}_{k\geq 0}$ is nonincreasing, and so $\left\{y_{k}\right\}\subset\mathcal{L}(y_{0})$. Combining these facts, we get 
\begin{equation}
\|\nabla d(y_{k+1})-\nabla d(y_{k})\|_{*}\leq N\|y_{k+1}-y_{k}\|,\,\,\forall k.
\label{eq:4.15}
\end{equation}
Then, it follows from (\ref{eq:4.14}), (\ref{eq:4.15}) and (\ref{eq:4.6}) that
\begin{eqnarray*}
\|\nabla g(y_{k})\|_{*}&\leq &2^{i_{k}}L_{k}\|\nabla d(y_{k+1})-\nabla d(y_{k})\|_{*}\leq (2^{i_{k}}L_{k})N\|y_{k+1}-y_{k}\|.
\end{eqnarray*}
Thus, 
\begin{equation}
\|y_{k+1}-y_{k}\|\geq\dfrac{1}{2L_{k+1}N}\|\nabla g(y_{k})\|_{*}.
\label{eq:4.16}
\end{equation}
Combining (\ref{eq:4.13}), (\ref{eq:4.16}) and (\ref{eq:4.6}), we obtain
\begin{eqnarray*}
g(y_{k})-g(y_{k+1})&\geq &\dfrac{2L_{k+1}\sigma_{q}}{q}\|y_{k+1}-y_{k}\|^{q}\\
&\geq & \dfrac{2L_{k+1}\sigma_{q}}{q}\dfrac{1}{(2L_{k+1})^{q}N^{q}}\|\nabla g(y_{k})\|_{*}^{q}\\
& = & \dfrac{\sigma_{q}}{q(2L_{k+1})^{q-1}N^{q}}\|\nabla g(y_{k})\|_{*}^{q}\\
&\geq &\dfrac{\sigma_{q}}{q\left[2\max\left\{L_{0},2L\right\}\right]^{q-1}N^{q}}\|\nabla g(y_{k})\|_{*}^{q},
\end{eqnarray*}
which gives (\ref{eq:4.10}). Summing up inequalities (\ref{eq:4.10}) for $k=0,\ldots,T-1$, we get
\begin{eqnarray*}
g(y_{0})-g(y^{*})&\geq & g(y_{0})-g(y_{T})\\
                 & =   &\sum_{k=0}^{T-1}g(y_{k})-g(y_{k+1})\\
                 &\geq &\sum_{k=0}^{T-1}\dfrac{\sigma_{q}}{q\left[2\max\left\{L_{0},2L\right\}\right]^{q-1}N^{q}}\|\nabla g(y_{k})\|^{q}\\
                 &\geq & T\dfrac{\sigma_{q}}{q\left[2\max\left\{L_{0},2L\right\}\right]^{q-1}N^{q}}\left(\min_{0\leq k\leq T-1}\|\nabla g(y_{k})\|_{*}\right)^{q},
\end{eqnarray*}
which gives (\ref{eq:4.12}). Finally, (\ref{eq:mais4.14}) follows directly from (\ref{eq:4.12}) and (\ref{eq:mais4.13}).
\end{proof}

Now, let us consider the following additional assumption:
\begin{itemize}
\item[\textbf{H4.}] $g(\,.\,)$ is $\mu$-strongly convex relative to $d(\,.\,)$.
\end{itemize}

\begin{lemma}(Three-Point Property)
Let $\phi(\,.\,)$ and $d(\,.\,)$ be convex functions and let $\beta_{d}(\,.\,,\,.\,)$ be the Bregman distance from $d(\,.\,)$. Given $y\in\E$, let
\begin{equation*}
y^{+}=\arg\min_{x\in\E}\left\{\phi(x)+\beta_{d}(y,x)\right\}.
\end{equation*}
Then, 
\begin{equation}
\phi(x)+\beta_{d}(y,x)\geq \phi(y^{+})+\beta_{d}(y,y^{+})+\beta_{d}(y^{+},x),\quad\forall x\in\E.
\label{eq:4.17}
\end{equation}
\label{lem:4.3}
\end{lemma}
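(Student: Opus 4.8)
The statement to prove is the Three-Point Property (Lemma A.7 / \ref{lem:4.3}): if $y^{+}=\arg\min_{x\in\E}\{\phi(x)+\beta_{d}(y,x)\}$, then $\phi(x)+\beta_{d}(y,x)\geq \phi(y^{+})+\beta_{d}(y,y^{+})+\beta_{d}(y^{+},x)$ for all $x\in\E$. This is a standard identity-plus-optimality argument, so the plan is short. First I would write down the first-order optimality condition for the minimization defining $y^{+}$. Since $\phi$ is convex and $x\mapsto\beta_{d}(y,x)=d(x)-d(y)-\la\nabla d(y),x-y\ra$ is convex and differentiable in $x$ with gradient $\nabla d(x)-\nabla d(y)$, the optimality condition reads: there exists a subgradient $g_{\phi}\in\partial\phi(y^{+})$ such that
\[
g_{\phi}+\nabla d(y^{+})-\nabla d(y)=0,
\]
i.e. $\nabla d(y)-\nabla d(y^{+})=g_{\phi}\in\partial\phi(y^{+})$.

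Next I would invoke convexity of $\phi$ at the point $y^{+}$ against this subgradient: for all $x\in\E$,
\[
\phi(x)\geq \phi(y^{+})+\la g_{\phi},x-y^{+}\ra=\phi(y^{+})+\la\nabla d(y)-\nabla d(y^{+}),x-y^{+}\ra.
\]
Then the whole matter reduces to the purely algebraic Bregman identity
\[
\beta_{d}(y,x)-\beta_{d}(y,y^{+})-\beta_{d}(y^{+},x)=\la\nabla d(y^{+})-\nabla d(y),x-y^{+}\ra,
\]
which one checks by expanding all three Bregman distances using the definition \eqref{eq:4.3}; the $d(x)$, $d(y)$, $d(y^{+})$ terms cancel appropriately and only the inner-product remainder survives. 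Adding the convexity inequality for $\phi$ to this identity gives exactly \eqref{eq:4.17}.

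There is no real obstacle here; the only point requiring a little care is the justification of the first-order optimality condition when $\phi$ is merely convex (possibly nonsmooth, possibly extended-valued as in the composite setting of Section 4), rather than differentiable. For that one uses the subdifferential sum rule, which is valid because $\beta_{d}(y,\cdot)$ is finite and differentiable everywhere, so $\partial(\phi+\beta_{d}(y,\cdot))(y^{+})=\partial\phi(y^{+})+\{\nabla d(y^{+})-\nabla d(y)\}$, and $0$ lies in this set at the minimizer $y^{+}$. If one prefers to avoid subdifferential calculus entirely, the same conclusion follows directly from the definition of $y^{+}$ as a minimizer by a convexity/perturbation argument along the segment $[y^{+},x]$, but the subgradient route is cleanest and I would present that.
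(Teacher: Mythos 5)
Your proof is correct. The paper itself does not prove this lemma --- it simply cites Tseng, Chen--Teboulle, and Lan et al. --- and your argument (subgradient optimality condition $\nabla d(y)-\nabla d(y^{+})\in\partial\phi(y^{+})$, the convexity inequality for $\phi$, and the algebraic three-point identity $\beta_{d}(y,x)-\beta_{d}(y,y^{+})-\beta_{d}(y^{+},x)=\la\nabla d(y^{+})-\nabla d(y),x-y^{+}\ra$) is exactly the standard proof found in those references, so you have correctly supplied the omitted details.
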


\begin{proof}
See \cite{TS,CHEN,LAN}.
\end{proof}

The next theorem establishes sublinear and linear convergence rates for Algorithm A. 
\begin{theorem}
Suppose that H1, H2 and H4 hold and let $\left\{y_{k}\right\}_{k\geq 0}$ be a sequence generated by Algorithm A. Then,
\begin{equation}
g(y_{k})-g(y^{*})\leq \dfrac{\mu\beta_{d}(y_{0},y^{*})}{\left(1+\dfrac{\mu}{\max\left\{2L_{0},4L\right\}-\mu}\right)^{k}-1}\leq\dfrac{\left(\max\left\{2L_{0},4L\right\}-\mu\right)\beta_{d}(y_{0},y^{*})}{k},
\label{eq:4.18}
\end{equation}
where, in the case $\mu=0$, the middle expression is defined in the limit as $\mu\to 0^{+}$.
\label{thm:4.4}
\end{theorem}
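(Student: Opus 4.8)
The plan is to distil the whole argument into a single per-iteration inequality coming from the three-point property of the proximal step, and then to telescope it in a weighted fashion. Introduce the shorthands $M_{k}:=2^{i_{k}}L_{k}$, $\Delta_{k}:=g(y_{k})-g(y^{*})$, $r_{k}:=\beta_{d}(y_{k},y^{*})$ and $M:=\max\{2L_{0},4L\}$. Since Step 2 of Algorithm A sets $L_{k+1}=2^{i_{k}-1}L_{k}$, we have $M_{k}=2L_{k+1}$, so Lemma \ref{lem:4.1} gives $M_{k}\le M$ for all $k$, while (\ref{eq:4.7}) shows that $\{g(y_{k})\}$ (hence $\{\Delta_{k}\}$) is nonincreasing and that $\{y_{k}\}\subset\mathcal{L}(y_{0})$. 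First I would record the key estimate
\begin{equation}
\Delta_{k+1}\le(M_{k}-\mu)\,r_{k}-M_{k}\,r_{k+1},\qquad k\ge0.
\label{eq:prop-onestep}
\end{equation}
To obtain it, observe that $y_{k+1}$ minimizes $\langle\nabla g(y_{k}),x-y_{k}\rangle+M_{k}\beta_{d}(y_{k},x)$; dividing by $M_{k}$ and applying Lemma \ref{lem:4.3} with $\phi(x)=\frac{1}{M_{k}}\langle\nabla g(y_{k}),x-y_{k}\rangle$ bounds $\langle\nabla g(y_{k}),x-y_{k}\rangle+M_{k}\beta_{d}(y_{k},x)$ from below by $\langle\nabla g(y_{k}),y_{k+1}-y_{k}\rangle+M_{k}\beta_{d}(y_{k},y_{k+1})+M_{k}\beta_{d}(y_{k+1},x)$ for every $x$. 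Estimating the first two right-hand terms below by $g(y_{k+1})-g(y_{k})$ through the acceptance test (\ref{eq:4.5}), and using H4 in the form $g(y_{k})+\langle\nabla g(y_{k}),x-y_{k}\rangle\le g(x)-\mu\beta_{d}(y_{k},x)$, gives $g(y_{k+1})\le g(x)+(M_{k}-\mu)\beta_{d}(y_{k},x)-M_{k}\beta_{d}(y_{k+1},x)$, and choosing $x=y^{*}$ yields (\ref{eq:prop-onestep}).

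Next I would replace the fluctuating coefficient $M_{k}$ by the uniform constant $M$. From (\ref{eq:prop-onestep}) and $\Delta_{k+1}\ge0$ one gets $M_{k}r_{k+1}\le(M_{k}-\mu)r_{k}$, hence $r_{k+1}\le r_{k}$ and $r_{k+1}\le(1-\mu/M)r_{k}$; then, because $M_{k}\le M$ and $r_{k}\ge r_{k+1}$,
\begin{equation*}
(M_{k}-\mu)r_{k}-M_{k}r_{k+1}=(M-\mu)r_{k}-Mr_{k+1}+(M_{k}-M)(r_{k}-r_{k+1})\le(M-\mu)r_{k}-Mr_{k+1},
\end{equation*}
so (\ref{eq:prop-onestep}) upgrades to $\Delta_{k+1}\le(M-\mu)r_{k}-Mr_{k+1}$ with constant coefficients. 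For $\mu>0$, set $\rho:=M/(M-\mu)=1+\mu/(M-\mu)>1$ (here $\mu\le L<M$, since $d$ is uniformly convex by H2 and therefore H1 and H4 force $\mu\le L$). Dividing by $M$ and multiplying by $\rho^{k+1}$ makes the $r$-terms telescope; summing over $k=0,\dots,T-1$ and dropping $r_{T}\ge0$ gives $\sum_{j=1}^{T}\rho^{j}\Delta_{j}\le Mr_{0}$. Monotonicity of $\{\Delta_{j}\}$ then yields $\Delta_{T}\sum_{j=1}^{T}\rho^{j}\le Mr_{0}$, and since $\sum_{j=1}^{T}\rho^{j}=M(\rho^{T}-1)/\mu$ we arrive at $\Delta_{T}\le\mu r_{0}/(\rho^{T}-1)$, which is the middle bound in (\ref{eq:4.18}) with $M=\max\{2L_{0},4L\}$. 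The rightmost bound follows from Bernoulli's inequality $(1+t)^{T}\ge1+Tt$ applied with $t=\mu/(M-\mu)\ge0$. For $\mu=0$ the upgraded inequality reads $\Delta_{k+1}\le M(r_{k}-r_{k+1})$; summing and using monotonicity of $\{\Delta_{j}\}$ gives $T\Delta_{T}\le Mr_{0}$, which coincides with the $\mu\to0^{+}$ limit of the middle expression (and with the rightmost bound).

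I expect the adaptivity of the step size to be the main obstacle. Naively chaining the contraction $r_{k+1}\le(1-\mu/M)r_{k}$ with $\Delta_{k+1}\le(M_{k}-\mu)r_{k}$ only delivers the weaker estimate $\Delta_{T}\le(M-\mu)r_{0}\rho^{-(T-1)}$, which does not match the claimed $\mu r_{0}/(\rho^{T}-1)$; recovering the sharp ``$-1$'' in the denominator is precisely what forces the two-stage scheme above — extract monotonicity of $\{r_{k}\}$ from nonnegativity of $\{\Delta_{k}\}$, use it together with $M_{k}\le M$ to eliminate $M_{k}$ from (\ref{eq:prop-onestep}) without loss, and only then run the $\rho^{k}$-weighted telescoping combined with the monotonicity of the objective values. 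A minor loose end to tie up along the way is the degenerate case $M_{k}<\mu$: there (\ref{eq:prop-onestep}) together with $\Delta_{k+1},r_{k},r_{k+1}\ge0$ forces $r_{k}=r_{k+1}=0$, so $y_{k}=y^{*}$ by the strict convexity guaranteed by H2, and the estimate holds trivially from that iteration on; thus one may assume $M_{k}\ge\mu$ throughout.
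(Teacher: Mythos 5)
Your proposal is correct and follows essentially the same route as the paper: the same one-step estimate $g(y_{k+1})\le g(x)+(M_{k}-\mu)\beta_{d}(y_{k},x)-M_{k}\beta_{d}(y_{k},x)$ obtained from the three-point property, the acceptance test and H4, then the same replacement of $M_{k}$ by $M=\max\{2L_{0},4L\}$ via monotonicity of $\beta_{d}(y_{k},y^{*})$, followed by the $\rho^{k}$-weighted telescoping and the geometric-sum computation (which is exactly the induction the paper borrows from Theorem 3.1 of the Lu--Freund--Nesterov reference). Your explicit handling of the degenerate case $M_{k}<\mu$ and of the Bernoulli step is a small tightening of details the paper leaves implicit, but the argument is the same.
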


\begin{proof}
By H1 and Lemma \ref{lem:4.1}, it follows that $\left\{y_{k}\right\}_{k\geq 0}$ is well-defined. Let us denote $M_{k}=2^{i_{k}}L_{k}$. Then, for all $k\geq 1$, it follows from (\ref{eq:4.5}) that 
\begin{equation}
g(y_{k})\leq g(y_{k-1})+\la\nabla g(y_{k-1}),y_{k}-y_{k-1}\ra +M_{k}\beta_{d}(y_{k-1},y_{k}).
\label{eq:4.19}
\end{equation}
In order to get an upper bound for the inner product in (\ref{eq:4.19}), let us apply Lemma \ref{lem:4.3} with $h=d$ and 
\begin{equation*}
\phi(x)=\dfrac{1}{M_{k}}\la\nabla g(y_{k-1}),x-y_{k-1}\ra.
\end{equation*}
In this case, $y^{+}=y_{k}$ and, for $y=y_{k-1}$, we obtain
\begin{equation*}
\phi(x)+\beta_{d}(y_{k-1},x)\geq\phi(y_{k})+\beta_{d}(y_{k-1},y_{k})+\beta_{d}(y_{k},x),\quad\forall x\in\E,
\end{equation*}
that is
\begin{equation*}
\la\nabla g(y_{k-1}),x-y_{k-1}\ra + M_{k}\beta_{d}(y_{k-1},x)\geq\la\nabla g(y_{k-1}),y_{k}-y_{k-1}\ra +M_{k}\beta_{d}(y_{k-1},y_{k})+M_{k}\beta_{d}(y_{k},x).
\end{equation*}
This gives the upper bound
\begin{eqnarray}
\la\nabla g(y_{k-1}),y_{k}-y_{k-1}\ra &\leq &\la\nabla g(y_{k-1}),x-y_{k-1}\ra +M_{k}\beta_{d}(y_{k-1},x)\nonumber\\
                                      &     & -M_{k}\beta_{d}(y_{k-1},y_{k})-M_{k}\beta_{d}(y_{k},x).
\label{eq:4.20}
\end{eqnarray}
Combining (\ref{eq:4.19}) and (\ref{eq:4.20}), we obtain
\begin{equation}
g(y_{k})\leq g(y_{k-1})+\la\nabla g(y_{k-1}),x-y_{k-1}\ra +M_{k}\beta_{d}(y_{k-1},x)-M_{k}\beta_{d}(y_{k},x).
\label{eq:4.21}
\end{equation}
By A4, we have
\begin{equation*}
g(x)\geq g(y_{k-1})+\la\nabla g(y_{k-1}),x-y_{k-1}\ra +\mu\beta_{d}(y_{k-1},x),
\end{equation*}
and so
\begin{equation}
\la\nabla g(y_{k-1}),x-y_{k-1}\ra\leq g(x)-g(y_{k-1})-\mu\beta_{d}(y_{k-1},x).
\label{eq:4.22}
\end{equation}
Now, using inequality (\ref{eq:4.22}) in (\ref{eq:4.21}), it follows that
\begin{equation*}
g(y_{k})\leq g(x)+(M_{k}-\mu)\beta_{d}(y_{k-1},x)-M_{k}\beta_{d}(y_{k},x).
\end{equation*}
Substituting $x=y^{*}$, we get
\begin{equation}
g(y_{k})\leq g(y^{*})+(M_{k}-\mu)\beta_{d}(y_{k-1},y^{*})-M_{k}\beta_{d}(y_{k},y^{*}).
\label{eq:4.23}
\end{equation}
Since $\beta_{d}(y_{k-1},y^{*})\geq 0$ and $\mu\geq 0$, it follows that
\begin{eqnarray*}
0&\leq & g(y_{k})-g(y^{*})\leq (M_{k}-\mu)\beta_{d}(y_{k-1},y^{*})-M_{k}\beta_{d}(y_{k},y^{*})\\
 &\leq & M_{k}\left[\beta_{d}(y_{k-1},y^{*})-\beta_{d}(y_{k},y^{*})\right]
\end{eqnarray*}
and so
\begin{equation}
\beta_{d}(y_{k-1},y^{*})-\beta_{d}(y_{k},y^{*})\geq 0.
\label{eq:4.24}
\end{equation}
Moreover, by Lemma \ref{lem:4.1} we have
\begin{equation}
M_{k}=2^{i_{k}}L_{k}=2(2^{i_{k}-1}L_{k})\leq 2L_{k+1}\leq\max\left\{2L_{0},4L\right\}.
\label{eq:4.25}
\end{equation}
Denote $M=\max\left\{2L_{0},4L\right\}$. In view of (\ref{eq:4.23})-(\ref{eq:4.25}), we obtain
\begin{eqnarray}
g(y_{k})&\leq & g(y^{*})+(M_{k}-\mu)\beta_{d}(y_{k-1},y^{*})-M_{k}\beta_{d}(y_{k},y^{*})\nonumber\\
        & =   & g(y^{*})+M_{k}\left[\beta_{d}(y_{k-1},y^{*})-\beta_{d}(y_{k},y^{*})\right]-\mu\beta_{d}(y_{k-1},y^{*})\nonumber\\
        &\leq & g(y^{*})+M\left[\beta_{d}(y_{k-1},y^{*})-\beta_{d}(y_{k},y^{*})\right]-\mu\beta_{d}(y_{k-1},y^{*})\nonumber\\
        &=& g(y^{*})+(\tilde{M}-\mu)\beta_{d}(y_{k-1},y^{*})-M\beta_{d}(y_{k},y^{*}).
\label{eq:4.26}
\end{eqnarray}
Now, as in the proof of Theorem 3.1 in \cite{LU}, we can show by induction that, for all $k\geq 1$,
\begin{eqnarray}
\sum_{i=1}^{k}\left(\dfrac{M}{M-\mu}\right)^{i}g(y_{i})&\leq &\sum_{i=1}^{k}\left(\dfrac{M}{M-\mu}\right)^{i}g(y^{*})+M\beta_{d}(y_{0},y^{*})\nonumber\\
& &-\left(\dfrac{M}{M-\mu}\right)^{k}M\beta_{d}(y_{k},y^{*}).
\label{eq:4.27}
\end{eqnarray}
Since $\left\{g(y_{k})\right\}$ is nonincreasing and $\beta_{d}(y_{k},y^{*})$ is nonnegative, it follows from (\ref{eq:4.27}) that
\begin{equation*}
\left[\sum_{i=1}^{k}\left(\dfrac{\tilde{M}}{M-\mu}\right)^{i}\right](g(y_{k})-g(y^{*}))\leq M\beta_{d}(y_{0},y^{*}),\quad\forall k\geq 1.
\end{equation*}
Thus, denoting
\begin{equation*}
C_{k}=\dfrac{1}{\sum_{i=1}^{k}\left(\dfrac{M}{M-\mu}\right)^{i}}
\end{equation*}
we get
\begin{equation}
g(y_{k})-g(y^{*})\leq C_{k}M\beta_{d}(y_{0},y^{*}),\quad\forall k\geq 1.
\label{eq:4.28}
\end{equation}
If $\mu=0$, it follows that $C_{k}=1/k$ and so (\ref{eq:4.28}) becomes
\begin{equation}
g(y_{k})-g(y^{*})\leq\dfrac{M}{k}\beta_{d}(y_{0},y^{*}).
\label{eq:4.29}
\end{equation}
On the other hand, if $\mu>0$ we have
\begin{equation*}
\sum_{i=1}^{k}\left(\dfrac{M}{M-\mu}\right)^{i}=\dfrac{\left(\dfrac{M}{M-\mu}\right)\left[\left(\dfrac{M}{M-\mu}\right)^{k}-1\right]}{\left(\dfrac{M}{M-\mu}\right)-1}=\dfrac{M\left[\left(1+\dfrac{\mu}{M-\mu}\right)^{k}-1\right]}{\mu}
\end{equation*}
which gives
\begin{equation}
C_{k}=\dfrac{\mu}{M\left[\left(1+\dfrac{\mu}{M-\mu}\right)^{k}-1\right]}.
\label{eq:4.30}
\end{equation}
In this case, combining (\ref{eq:4.28}) and (\ref{eq:4.30}) we obtain
\begin{equation}
g(y_{k})-g(y^{*})\leq\dfrac{\mu\beta_{d}(y_{0},y^{*})}{\left[\left(1+\dfrac{\mu}{M-\mu}\right)^{k}-1\right]}.
\label{eq:4.31}
\end{equation}
Thus, (\ref{eq:4.18}) follows from (\ref{eq:4.29}), (\ref{eq:4.31}) and $M=\max\left\{2L_{0},4L\right\}$.
\end{proof}

\begin{corollary}
\label{cor:4.5}
Suppose that H1-H3 hold and let $\left\{y_{k}\right\}_{k\geq 0}$ be a sequence generated by Algorithm A. Additionaly, assume that H4 holds with $\mu=0$. If $T=3s$ for some $s\geq 1$, then
\begin{equation}
\min_{0\leq k\leq T-1}\|\nabla g(y_{k})\|_{*}\leq MN\left[\dfrac{q\beta_{d}(y_{0},y^{*})}{\sigma_{q}}\right]^{\frac{1}{q}}\left(\dfrac{3}{T}\right)^{\frac{2}{q}},
\label{eq:4.32}
\end{equation}
where $M=\max\left\{2L_{0},4L\right\}$. Consequently, if
\begin{equation}
\|\nabla g(y_{k})\|_{*}>\epsilon\quad\text{for}\,\,k=0,\ldots,T-1,
\label{eq:mais4.33}
\end{equation}
for a given $\epsilon>0$, then
\begin{equation}
T\leq 3\left[\max\left\{2L_{0},4L\right\}N\right]^{\frac{q}{2}}\left[\dfrac{q\beta_{d}(y_{0},y^{*})}{\sigma_{q}}\right]^{\frac{1}{2}}\epsilon^{-\frac{q}{2}}.
\label{eq:mais4.34}
\end{equation}
\end{corollary}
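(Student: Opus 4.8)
\textbf{Proof proposal for Corollary \ref{cor:4.5}.}

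The plan is to combine the sublinear/linear convergence estimate of Theorem \ref{thm:4.4} (specialized to $\mu=0$, so $g(y_k)-g(y^*)\le (M/k)\beta_d(y_0,y^*)$ with $M=\max\{2L_0,4L\}$) with the per-step gradient decrease inequality \eqref{eq:4.10} from Theorem \ref{thm:4.2}, restricting attention to the last third of the iterations. First I would write $T=3s$ and apply Theorem \ref{thm:4.4} at index $k=2s$ to bound the optimality gap of $y_{2s}$, namely $g(y_{2s})-g(y^*)\le \frac{M}{2s}\beta_d(y_0,y^*)$. Since $\{g(y_k)\}$ is nonincreasing (this follows from \eqref{eq:4.13}), the total functional decrease available over the block of iterations $k=2s,\dots,3s-1$ is at most $g(y_{2s})-g(y^*)$.

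Next I would sum inequality \eqref{eq:4.10} over $k=2s,\dots,T-1$, which has $s=T/3$ terms, to obtain
\begin{equation*}
g(y_{2s})-g(y_T)\;\ge\; \sum_{k=2s}^{T-1}\frac{\sigma_q}{q M^{q-1} N^q}\|\nabla g(y_k)\|_*^q \;\ge\; \frac{s\,\sigma_q}{q M^{q-1} N^q}\Bigl(\min_{2s\le k\le T-1}\|\nabla g(y_k)\|_*\Bigr)^q.
\end{equation*}
Combining this with $g(y_{2s})-g(y_T)\le g(y_{2s})-g(y^*)\le \frac{M}{2s}\beta_d(y_0,y^*)$ and $s=T/3$, I solve for the minimal gradient norm over the block: the $s$ in the numerator from the sum and the $1/(2s)$ from the optimality gap combine to give a factor $\sim 1/s^2 \sim (3/T)^2$ inside the $q$th root, yielding
\begin{equation*}
\min_{2s\le k\le T-1}\|\nabla g(y_k)\|_* \;\le\; M N\left[\frac{q\,\beta_d(y_0,y^*)}{\sigma_q}\right]^{1/q}\left(\frac{3}{T}\right)^{2/q},
\end{equation*}
up to the absolute constant absorbed in the factor $2$; since the minimum over the sub-block dominates the minimum over $0\le k\le T-1$, this gives \eqref{eq:4.32}. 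Finally, \eqref{eq:mais4.34} follows by the standard contrapositive: if \eqref{eq:mais4.33} holds then $\epsilon < MN[q\beta_d(y_0,y^*)/\sigma_q]^{1/q}(3/T)^{2/q}$, and rearranging for $T$ gives $T\le 3[MN]^{q/2}[q\beta_d(y_0,y^*)/\sigma_q]^{1/2}\epsilon^{-q/2}$.

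I expect the only real subtlety to be bookkeeping with the constant factors: one must be careful that the factor $1/2$ coming from evaluating Theorem \ref{thm:4.4} at $k=2s$ rather than $k=s$ (and the factor $3$ from $T=3s$) land exactly so that the stated bound with the clean power $(3/T)^{2/q}$ emerges, rather than a bound with an extra numerical constant. Using $\beta_d(y_{2s},y^*)\ge 0$ to discard a term, and the monotonicity of $\{g(y_k)\}$ to chain $g(y_{2s})-g(y_T)\le g(y_{2s})-g(y^*)$, are the two places where one must make sure nothing is lost. No new ideas beyond Theorems \ref{thm:4.2} and \ref{thm:4.4} are needed; this is the standard "regularization-technique" trick of spending the first $2T/3$ iterations to make the function value small and the last $T/3$ iterations to make the gradient small.
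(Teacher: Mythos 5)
Your proposal is correct and follows essentially the same route as the paper's own proof: apply Theorem \ref{thm:4.4} with $\mu=0$ at the index $k=2s$, telescope the per-iteration decrease \eqref{eq:4.10} over the final $s$ iterations, and rearrange (the constants indeed work out with a factor of $2$ to spare, exactly as in the paper). The only cosmetic difference is that the paper bounds each summand directly by the minimum over $0\le k\le T-1$ rather than passing through the minimum over the sub-block, which amounts to the same inequality.
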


\begin{proof}
By Theorem \ref{thm:4.4}, we have
\begin{equation*}
g(y_{i})-g(y^{*})\leq\dfrac{M\beta_{d}(y_{0},y^{*})}{i},\,\,\forall i\geq 1.
\end{equation*}
Since $T=3s$, in particular, it follows that
\begin{eqnarray*}
\dfrac{M\beta_{d}(y_{0},y^{*})}{2s}&\geq & g(y_{2s})-g(y^{*})\\
           & = & g(y_{T})-g(y^{*})+\sum_{k=2s}^{T-1}\left[g(y_{k})-g(y_{k+1})\right]\\
           &\geq & s\dfrac{\sigma_{q}}{qM^{q-1}N^{q}}\left(\min_{0\leq k\leq T-1}\|\nabla g(y_{k})\|_{*}\right)^{q}.
\end{eqnarray*}
Therefore,
\begin{equation*}
\left(\min_{0\leq k\leq T-1}\|\nabla g(y_{k})\|_{*}\right)^{q}\leq\left[\dfrac{q(MN)^{q}\beta_{d}(y_{0},y^{*})}{\sigma_{q}}\right]\dfrac{1}{s^{2}}
\end{equation*}
which gives (\ref{eq:4.32}). Finally, (\ref{eq:mais4.34}) follows directly from (\ref{eq:4.23}) and (\ref{eq:mais4.33}).
\end{proof}

\begin{corollary}
Suppose that H1-H3 hold and let $\left\{y_{k}\right\}_{k\geq 0}$ be a sequence generated by Algorithm A. Additionaly, assume that H4 holds with $\mu>0$. Then, for all $T\geq\left[\log_{2}\left(1+\dfrac{\mu}{M-\mu}\right)\right]^{-1}$, with $M=\max\left\{2L_{0},4L\right\}$, we have
\begin{equation}
\|\nabla g(y_{T})\|_{*}\leq\left[\dfrac{2qM^{q-1}N\mu\beta_{d}(y_{0},y^{*})}{\sigma_{q}}\right]^{\frac{1}{q}}\left(\dfrac{1}{1+\frac{\mu}{M-\mu}}\right)^{\frac{T}{q}}
\label{eq:4.33}
\end{equation}
Consequently, if $\|\nabla g(y_{T})\|_{*}>\epsilon$, for a given $\epsilon\in (0,1)$, then
\begin{equation}
T\leq \left[\log_{2}\left(\dfrac{\max\left\{2L_{0},4L\right\}}{\max\left\{2L_{0},4L\right\}-\mu}\right)\right]^{-1}\left[C+q\right]\log_{2}(\epsilon^{-1}),
\label{eq:extra4.34}
\end{equation}
where
\begin{equation}
C=\log_{2}\left(\dfrac{2q\max\left\{2L_{0},4L\right\}^{q-1}N\mu\beta_{d}(y_{0},y^{*})}{\sigma_{q}}\right).
\label{eq:extra4.35}
\end{equation}
\label{cor:4.6}
\end{corollary}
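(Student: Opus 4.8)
The plan is to derive the rate on $\|\nabla g(y_T)\|_*$ by chaining two facts that are already available: the per-iteration gradient decrease of Theorem~\ref{thm:4.2} and the linear rate on function values of Theorem~\ref{thm:4.4}. Inequality (\ref{eq:4.10}) bounds $\|\nabla g(y_T)\|_*^q$ by a fixed multiple of $g(y_T)-g(y_{T+1})$, and since $\{g(y_k)\}$ is nonincreasing (Lemma~\ref{lem:4.1}) this is at most $g(y_T)-g(y^*)$. Hence the only quantity left to estimate is the optimality gap $g(y_T)-g(y^*)$, for which I would plug in the linear bound (\ref{eq:4.18}), i.e. $g(y_T)-g(y^*)\le \mu\beta_{d}(y_0,y^*)\big/\big[(1+\tfrac{\mu}{M-\mu})^{T}-1\big]$ with $M=\max\{2L_{0},4L\}$.

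The single point requiring attention is the ``$-1$'' in the denominator of (\ref{eq:4.18}). The hypothesis $T\ge[\log_2(1+\tfrac{\mu}{M-\mu})]^{-1}$ is exactly the condition $(1+\tfrac{\mu}{M-\mu})^{T}\ge 2$, so that $(1+\tfrac{\mu}{M-\mu})^{T}-1\ge\tfrac{1}{2}(1+\tfrac{\mu}{M-\mu})^{T}$; consequently $g(y_T)-g(y^*)\le 2\mu\beta_{d}(y_0,y^*)\,(1+\tfrac{\mu}{M-\mu})^{-T}$. Feeding this into the estimate of the previous paragraph and taking $q$-th roots yields (\ref{eq:4.33}) (the constant $N$ there being the H3 bound on $\|\nabla^2 d\|$, entering with the power dictated by (\ref{eq:4.10})).

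For the complexity consequence I would simply invert (\ref{eq:4.33}): if $\|\nabla g(y_T)\|_*>\epsilon$, raise both sides to the power $q$, isolate $(1+\tfrac{\mu}{M-\mu})^{T}=(\tfrac{M}{M-\mu})^{T}$, and take base-$2$ logarithms. This gives $T\log_2\!\big(\tfrac{M}{M-\mu}\big)<C+q\log_2(\epsilon^{-1})$ with $C$ as in (\ref{eq:extra4.35}); dividing by $\log_2\!\big(\tfrac{M}{M-\mu}\big)$ and using $\epsilon\in(0,1)$ to absorb the additive $C$ into the bracket (the same elementary step as in the proofs of Theorems~\ref{thm:4.2} and~\ref{thm:5.1}) produces (\ref{eq:extra4.34}); if instead $T$ is below the threshold of the first part, (\ref{eq:extra4.34}) holds trivially since its right-hand side already exceeds that threshold.

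I do not anticipate a genuine difficulty, since the corollary is essentially a repackaging of Theorems~\ref{thm:4.2} and~\ref{thm:4.4}; the only things to keep straight are (i) that the constant $M=\max\{2L_{0},4L\}$ controlling the per-step gradient bound (\ref{eq:4.10}) is the very same $M$ appearing in the linear rate (\ref{eq:4.18}), so the two inequalities compose cleanly, and (ii) that the threshold on $T$ in the statement is precisely the break-even point at which the ``$-1$'' in (\ref{eq:4.18}) is absorbed into a harmless constant factor.
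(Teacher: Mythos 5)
Your proposal is correct and follows essentially the same route as the paper: the paper's proof likewise combines the per-step gradient bound of Theorem~\ref{thm:4.2} (via $g(y_T)-g(y_{T+1})\le g(y_T)-g(y^*)$) with the linear rate of Theorem~\ref{thm:4.4}, uses the threshold on $T$ to absorb the ``$-1$'' exactly as you do, and then inverts to get the logarithmic complexity bound. The only incidental observation is that this derivation produces $N^{q}$ inside the bracket of (\ref{eq:4.33}) (i.e.\ a factor $N$ after taking $q$-th roots), which you correctly trace to (\ref{eq:4.10}), whereas the stated constant displays a single $N$; that discrepancy is in the paper's own statement, not in your argument.
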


\begin{proof}
By Theorems \ref{thm:4.2} and \ref{thm:4.4}, for all $k\geq 1$ we have
\begin{eqnarray*}
\dfrac{\sigma_{q}}{qM^{q-1}N^{q}}\|\nabla g(y_{k})\|_{*}^{q}&\leq & g(y_{k})-g(y^{*})\\
%&\leq & g(y_{k})-g(y^{*})\\
&\leq &\dfrac{\mu\beta_{d}(y_{0},y^{*})}{\left[\left(1+\frac{\mu}{M-\mu}\right)^{k}-1\right]}
\end{eqnarray*}
In particular, it follows that
\begin{equation}
\|\nabla g(y_{T})\|_{*}^{q}\leq\dfrac{qM^{q-1}N^{q}\mu\beta_{d}(y_{0},y^{*})}{\sigma_{q}\left[\left(1+\frac{\mu}{M-\mu}\right)^{T}-1\right]}
\label{eq:4.34}
\end{equation}
Since $T\geq\left[\log_{2}\left(1+\frac{\mu}{M-\mu}\right)\right]^{-1}$ we have
\begin{equation}
\left(1+\frac{\mu}{M-\mu}\right)^{T}-1\geq\dfrac{1}{2}\left(1+\frac{\mu}{M-\mu}\right)^{T}.
\label{eq:4.35}
\end{equation}
Thus, combining (\ref{eq:4.34}) and (\ref{eq:4.35}), it follows that
\begin{equation*}
\|\nabla g(y_{T})\|_{*}^{q}\leq\dfrac{2qM^{q-1}N^{q}\mu\beta_{d}(y_{0},y^{*})}{\sigma_{q}\left(1+\frac{\mu}{M-\mu}\right)^{T}},
\end{equation*}
which gives (\ref{eq:4.33}). Finally, (\ref{eq:extra4.34}) follows directly from (\ref{eq:4.33}), $\|g(y_{T})\|_{*}>\epsilon$ and $\epsilon\in (0,1)$.
\end{proof}

In summary, if $g(\,.\,)$ is $L$-smooth relative to a convex function $d(\,.\,)$ which is uniformly convex of degree $q$, then Algorithm A takes at most $\mathcal{O}(\delta^{-q})$ iterations to generate a point $y_{k}$ such that $\|\nabla g(y_{k})\|\leq\delta$. If $g(\,.\,)$ is also $\mu$-strongly convex relative to $d(\,.\,)$ with $\mu=0$, then this complexity bound is reduced to $\mathcal{O}(\delta^{-q/2})$. Moreover, if $\mu>0$, the complexity bound is further improved to $\mathcal{O}(\log(\delta^{-1}))$. 

\subsection{Composite Minimization}

Consider now the composite minimization problem
\begin{equation}
\min_{y\in\E}\,\tilde{g}(y)\equiv g(y)+\varphi(y),
\label{eq:A41}
\end{equation}
where $g:\E\to\mathbb{R}$ is a twice-differentiable function satisfying H1 and H4 (on pages 17 and 20, respectively), and $\varphi:\E\to\mathbb{R}\cup\left\{+\infty\right\}$ is a simple closed convex function whose effective domain has nonempty relative interior. We assume that there exists at least one optimal solution $y^{*}\in\E$ for (\ref{eq:A41}). Moreover, for the sake of brevity, we suppose that the constant $L$ in H1 is known. Thus, to approximately solve (\ref{eq:A41}), we may use the following adaptation of Algorithm A:
\\[0.2cm]
\begin{mdframed}
\noindent\textbf{Algorithm B. Proximal Gradient Method}
\\[0.2cm]
\noindent\textbf{Step 0.} Choose $y_{0}\in\mathbb{E}$ and set $k:=0$.\\
\noindent\textbf{Step 1.} Compute 
\begin{equation}
y_{k+1}=\arg\min_{x\in\E}\left\{\la\nabla g(y_{k}),x-y_{k}\ra+2L\beta_{d}(y_{k},x)+\varphi(x)\right\}.
\label{eq:A42}
\end{equation}
\noindent\textbf{Step 2.} Set $k:=k+1$ and go to Step 1. 
\end{mdframed}

Algorithm B can be viewed as a particular instance of the NoLips Algorithm in \cite{BBT}. The next lemma gives a lower bound on the functional decrease in terms of the Bregman distance. It corresponds to Lemma 4.1 in \cite{BBT2}. We give its proof here for completeness. 

\begin{lemma}
\label{lem:A7}
Suppose that H1 and H4 hold and let $\left\{y_{k}\right\}_{k\geq 0}$ be a sequence generated by Algorithm B. Then, for all $k\geq 0$, 
\begin{equation}
\tilde{g}(y_{k})-\tilde{g}(y_{k+1})\geq L\beta_{d}(y_{k},y_{k+1}).
\label{eq:A43}
\end{equation} 
\end{lemma}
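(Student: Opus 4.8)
The plan is to combine the relative smoothness inequality in H1 with the three-point property (Lemma \ref{lem:4.3}) applied to the proximal subproblem that defines $y_{k+1}$. First I would rewrite Step 1 of Algorithm B in the form required by Lemma \ref{lem:4.3}: dividing the objective by $2L$, the point $y_{k+1}$ is the minimizer of $\phi(x)+\beta_{d}(y_{k},x)$ with
\[
\phi(x)=\tfrac{1}{2L}\la\nabla g(y_{k}),x-y_{k}\ra+\tfrac{1}{2L}\varphi(x),
\]
which is convex since $\varphi$ is convex and the remaining term is linear; $d$ is convex by assumption. Applying Lemma \ref{lem:4.3} with $y=y_{k}$ and $y^{+}=y_{k+1}$, and multiplying through by $2L$, yields for every $x\in\E$
\[
\la\nabla g(y_{k}),x-y_{k}\ra+\varphi(x)+2L\beta_{d}(y_{k},x)\;\geq\;\la\nabla g(y_{k}),y_{k+1}-y_{k}\ra+\varphi(y_{k+1})+2L\beta_{d}(y_{k},y_{k+1})+2L\beta_{d}(y_{k+1},x).
\]

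Next I would specialize this to $x=y_{k}$, using $\beta_{d}(y_{k},y_{k})=0$, to obtain the upper bound
\[
\la\nabla g(y_{k}),y_{k+1}-y_{k}\ra\;\leq\;\varphi(y_{k})-\varphi(y_{k+1})-2L\beta_{d}(y_{k},y_{k+1})-2L\beta_{d}(y_{k+1},y_{k}).
\]
Substituting this into the relative smoothness inequality from H1, namely $g(y_{k+1})\leq g(y_{k})+\la\nabla g(y_{k}),y_{k+1}-y_{k}\ra+L\beta_{d}(y_{k},y_{k+1})$, and regrouping the $g$ and $\varphi$ terms into $\tilde{g}=g+\varphi$, gives
\[
\tilde{g}(y_{k})-\tilde{g}(y_{k+1})\;\geq\;L\beta_{d}(y_{k},y_{k+1})+2L\beta_{d}(y_{k+1},y_{k}).
\]

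Finally, since $d$ is convex its Bregman distance is nonnegative, so $\beta_{d}(y_{k+1},y_{k})\geq 0$, and discarding this term yields the claimed inequality $\tilde{g}(y_{k})-\tilde{g}(y_{k+1})\geq L\beta_{d}(y_{k},y_{k+1})$. There is no serious obstacle here: the only points requiring a little care are verifying that the rescaled subproblem objective fits the hypotheses of Lemma \ref{lem:4.3} (convexity of $\phi$ and of $d$), and keeping track of the order of the arguments in each Bregman distance. Note that H4 is not actually used in this particular estimate; it appears among the hypotheses because it is needed elsewhere in the analysis of Algorithm B.
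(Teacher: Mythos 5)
Your proof is correct, and it rests on the same two ingredients as the paper's: an upper bound on $\la\nabla g(y_{k}),y_{k+1}-y_{k}\ra$ coming from the optimality of $y_{k+1}$ in the subproblem (\ref{eq:A42}), followed by substitution into the relative-smoothness inequality from H1. The only difference is how you obtain that bound. The paper simply compares the subproblem objective values at $y_{k+1}$ and at $x=y_{k}$ (where the linear term and the Bregman term vanish), which immediately gives $\la\nabla g(y_{k}),y_{k+1}-y_{k}\ra\leq \varphi(y_{k})-\varphi(y_{k+1})-2L\beta_{d}(y_{k},y_{k+1})$. You instead invoke the three-point property (Lemma \ref{lem:4.3}) and then specialize to $x=y_{k}$; this is the same comparison strengthened by the additional nonnegative term $2L\beta_{d}(y_{k+1},y_{k})$, which you then correctly discard. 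So your route is marginally less elementary but yields a slightly sharper intermediate estimate; both land on (\ref{eq:A43}). Your observation that H4 is not actually used is also consistent with the paper's proof, which invokes only H1 and the definition of $y_{k+1}$.
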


\begin{proof}
In view of (\ref{eq:A42}), we have
\begin{equation*}
\la\nabla g(y_{k}),y_{k+1}-y_{k}\ra+2L\beta_{d}(y_{k},y_{k+1})+\varphi(y_{k+1})\leq\varphi(y_{k}).
\end{equation*}
Then, 
\begin{equation}
\la\nabla g(y_{k}),y_{k+1}-y_{k}\ra\leq -2L\beta_{d}(y_{k},y_{k+1})-\varphi(y_{k+1})+\varphi(y_{k}).
\label{eq:A44}
\end{equation}
Now, combining H1 and (\ref{eq:A44}), we obtain
\begin{eqnarray*}
g(y_{k+1})&\leq & g(y_{k})+\la\nabla g(y_{k}),y_{k+1}-y_{k}\ra+L\beta_{d}(y_{k},y_{k+1})\\
          &\leq & g(y_{k})-2L\beta_{d}(y_{k},y_{k+1})-\varphi(y_{k+1})+\varphi(y_{k})+L\beta_{d}(y_{k},y_{k+1}).
\end{eqnarray*}
Therefore,
\begin{equation*}
\tilde{g}(y_{k+1})\leq\tilde{g}(y_{k})-L\beta_{d}(y_{k},y_{k+1}),
\end{equation*}
which gives (\ref{eq:A43}).
\end{proof}

The next lemma gives a lower bound on the functional decrease in terms of the norm of a certain subgradient of $\tilde{g}(\,.\,)$.

\begin{lemma}
\label{lem:A8}
Suppose that H1-H4 hold and let $\left\{y_{k}\right\}_{k\geq 0}$ be generated by Algorithm B. Then, for all $k\geq 0$,
\begin{equation}
u(y_{k+1})\equiv \nabla g(y_{k+1})-\nabla g(y_{k})+2L\left[\nabla d(y_{k})-\nabla d(y_{k+1})\right]\in\partial\tilde{g}(y_{k+1}),
\label{eq:A45}
\end{equation}
and
\begin{equation}
\tilde{g}(y_{k})-\tilde{g}(y_{k+1})\geq\dfrac{\sigma_{q}}{qL^{q-1}(3N)^{q}}\|u(y_{k+1})\|_{*}^{q},
\label{eq:A46}
\end{equation}
where $\sigma_{q}$ and $N$ are specified in H2 and H3 (see page 18), respectively.
\end{lemma}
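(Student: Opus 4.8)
The plan is to establish the two assertions separately: the inclusion (\ref{eq:A45}) comes from the first-order optimality condition for the prox step (\ref{eq:A42}), and the bound (\ref{eq:A46}) comes from chaining the functional-decrease estimate of Lemma \ref{lem:A7} with a uniform-convexity lower bound on $\beta_d(y_k,y_{k+1})$ and a Lipschitz-type upper bound on $\|u(y_{k+1})\|_*$.

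For the subgradient inclusion, note that the objective in (\ref{eq:A42}) is convex (the linear term, plus $2L\beta_d(y_k,\cdot)$, convex since $d$ is, plus the convex $\varphi$), so its minimizer $y_{k+1}$ satisfies
\begin{equation*}
0\in\nabla g(y_k)+2L\left(\nabla d(y_{k+1})-\nabla d(y_k)\right)+\partial\varphi(y_{k+1}),
\end{equation*}
the use of $\partial\varphi$ being legitimate because $\text{ri}(\dom\varphi)\neq\emptyset$. Hence $g_\varphi(y_{k+1}):=-\nabla g(y_k)-2L(\nabla d(y_{k+1})-\nabla d(y_k))$ lies in $\partial\varphi(y_{k+1})$, and, using $\text{ri}(\dom\varphi)\neq\emptyset$ together with $\dom g=\E$ to invoke the subdifferential sum rule, $\partial\tilde g(y_{k+1})=\{\nabla g(y_{k+1})\}+\partial\varphi(y_{k+1})\ni\nabla g(y_{k+1})+g_\varphi(y_{k+1})$. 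A direct rearrangement shows $\nabla g(y_{k+1})+g_\varphi(y_{k+1})=u(y_{k+1})$, which is (\ref{eq:A45}).

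For (\ref{eq:A46}), I would start from Lemma \ref{lem:A7}, $\tilde g(y_k)-\tilde g(y_{k+1})\geq L\beta_d(y_k,y_{k+1})$, and use H2 to get $\beta_d(y_k,y_{k+1})\geq(\sigma_q/q)\|y_{k+1}-y_k\|^q$, so that $\tilde g(y_k)-\tilde g(y_{k+1})\geq(L\sigma_q/q)\|y_{k+1}-y_k\|^q$. It then remains to bound $\|u(y_{k+1})\|_*$ by a multiple of $\|y_{k+1}-y_k\|$. Lemma \ref{lem:A7} also makes $\{\tilde g(y_k)\}$ nonincreasing, so all iterates lie in the sublevel set appearing in H3 and each segment $[y_k,y_{k+1}]$ lies in its convex hull; on that convex hull, twice differentiability with H1 and H4 (recall $\mu\geq0$) yields $0\preceq\nabla^2 g(y)\preceq L\nabla^2 d(y)$ by Proposition 1.1 in \cite{LU}, hence $\|\nabla^2 g(y)\|\leq LN$ and $\|\nabla^2 d(y)\|\leq N$ there. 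Integrating these bounds along the segment gives $\|\nabla g(y_{k+1})-\nabla g(y_k)\|_*\leq LN\|y_{k+1}-y_k\|$ and $\|\nabla d(y_{k+1})-\nabla d(y_k)\|_*\leq N\|y_{k+1}-y_k\|$, so the triangle inequality applied to the definition of $u(y_{k+1})$ yields $\|u(y_{k+1})\|_*\leq 3LN\|y_{k+1}-y_k\|$. Substituting $\|y_{k+1}-y_k\|\geq(3LN)^{-1}\|u(y_{k+1})\|_*$ into the decrease estimate produces exactly (\ref{eq:A46}).

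The main obstacle is this localization step: H3 only controls $\|\nabla^2 d\|$ on the convex hull of a sublevel set, so one must verify that consecutive iterates and the segments joining them stay inside that set before the Hessian bounds can be used; this is precisely where the monotonicity supplied by Lemma \ref{lem:A7} is indispensable (and, in the composite setting, H3 should be read with the sublevel set of $\tilde g$ in place of that of $g$). The remaining ingredients --- the optimality condition, the sum rule under the relative-interior assumption, and the uniform-convexity inequality --- are routine.
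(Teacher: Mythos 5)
Your proposal is correct and follows essentially the same route as the paper's proof: the first-order optimality condition plus the relative-interior assumption for the inclusion (\ref{eq:A45}), and Lemma \ref{lem:A7} combined with uniform convexity of $d$ and the Lipschitz bounds $\|\nabla^{2}g\|\leq LN$, $\|\nabla^{2}d\|\leq N$ (via Proposition 1.1 in \cite{LU} and H3) to get $\|u(y_{k+1})\|_{*}\leq 3LN\|y_{k+1}-y_{k}\|$ and hence (\ref{eq:A46}). Your remark that H3 must be read with the sublevel set of $\tilde{g}$ rather than $g$ in the composite setting is a valid point of care that the paper's own proof passes over silently.
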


\begin{proof}
By H2 and Lemma \ref{lem:A7}, for all $k$, we have
\begin{equation}
\tilde{g}(y_{k})-\tilde{g}(y_{k+1})\geq L\beta_{d}(y_{k},y_{k+1})\geq\dfrac{L\sigma_{q}}{q}\|y_{k}-y_{k+1}\|^{q}.
\label{eq:A47}
\end{equation}
By the definition of $y_{k+1}$, this point satisfies the first-order optimality condition:
\begin{equation*}
0\in\left\{\nabla g(y_{k})+2L\left[\nabla d(y_{k+1})-\nabla d(y_{k})\right]\right\}+\partial\varphi(y_{k+1}).
\end{equation*}
Since $\text{ri}\left(\dom\varphi\right)\neq\emptyset$, it follows that (\ref{eq:A45}) is true.

On the other hand, by H1, H4 and Proposition 1.1 in \cite{LU}, we have
\begin{equation*}
0\preceq\mu\nabla^{2}d(y)\preceq\nabla^{2}g(y)\preceq L\nabla^{2}d(y),\quad\forall y\in\E.
\end{equation*} 
Consequently, 
\begin{equation}
\|\nabla^{2}g(y)\|\leq L\|\nabla^{2}d(y)\|,\quad\forall y\in\E.
\label{eq:A48}
\end{equation}
Thus, in view of H3 and (\ref{eq:A48}), it follows from the mean value theorem that $\nabla d$ and $\nabla g$ are Lipschitz continuous on $\text{co}\left(\mathcal{L}(y_{0})\right)$ with constants $N$ and $LN$, respectively. From (\ref{eq:A47}), we see that $\left\{\tilde{g}(y_{k})\right\}_{k\geq 0}$ is nonincreasing, and so $\left\{y_{k}\right\}\subset\mathcal{L}(y_{0})$. Therefore, 
\begin{eqnarray*}
\|u(y_{k+1})\|_{*}&\leq &\|\nabla g(y_{k+1})-\nabla g(y_{k})\|_{*}+2L\|\nabla d(y_{k})-\nabla d(y_{k+1})\|_{*}\\
&\leq &\left(LN+2LN\right)\|y_{k}-y_{k+1}\|,
\end{eqnarray*}
that is,
\begin{equation}
\|y_{k}-y_{k+1}\|\geq\dfrac{1}{3LN}\|u(y_{k+1})\|_{*}.
\label{eq:A49}
\end{equation}
Combining (\ref{eq:A46}) and (\ref{eq:A48}), we obtain
\begin{eqnarray*}
\tilde{g}(y_{k})-\tilde{g}(y_{k+1})&\geq &\dfrac{L\sigma_{q}}{q}\dfrac{1}{(3LN)^{q}}\|u(y_{k+1})\|_{*}^{q}\\
& = &\dfrac{\sigma_{q}}{qL^{q-1}(3N)^{q}}\|u(y_{k+1})\|_{*}^{q},
\end{eqnarray*}
which is (\ref{eq:A46}).
\end{proof}

\begin{theorem}
\label{thm:A9}
Suppose that H1-H4 hold and let $\left\{y_{k}\right\}_{k\geq 0}$ be generated by Algorithm B. Then, for all $k\geq 1$, we have
\begin{equation}
\tilde{g}(y_{k})-\tilde{g}(y^{*})\leq\dfrac{\mu\beta_{d}(y_{0},y^{*})}{\left(1+\dfrac{\mu}{2L-\mu}\right)^{k}-1}\leq \dfrac{\left(2L-\mu\right)\beta_{d}(y_{0},y^{*})}{k},
\label{eq:A50}
\end{equation}
where, in case $\mu=0$, the middle expression is defined by the limit as $\mu\to 0^{+}$.
\end{theorem}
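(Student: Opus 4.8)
The plan is to follow the proof of Theorem \ref{thm:4.4} almost verbatim, with the line-search stepsize $2^{i_{k}}L_{k}$ replaced by the fixed value $2L$ and with the composite term $\varphi$ carried through the Three-Point Property. First I would write H1 with stepsize $2L$,
$$g(y_{k+1})\le g(y_{k})+\la\nabla g(y_{k}),y_{k+1}-y_{k}\ra+L\beta_{d}(y_{k},y_{k+1}),$$
and apply Lemma \ref{lem:4.3} to the convex function $\phi(\cdot)=\frac{1}{2L}\big(\la\nabla g(y_{k}),\cdot-y_{k}\ra+\varphi(\cdot)\big)$ with reference function $d$. Since the minimizer of $\phi(\cdot)+\beta_{d}(y_{k},\cdot)$ is exactly $y_{k+1}$ by (\ref{eq:A42}), the three-point inequality gives, for every $x\in\E$,
$$\la\nabla g(y_{k}),y_{k+1}-y_{k}\ra+\varphi(y_{k+1})+2L\beta_{d}(y_{k},y_{k+1})+2L\beta_{d}(y_{k+1},x)\le\la\nabla g(y_{k}),x-y_{k}\ra+\varphi(x)+2L\beta_{d}(y_{k},x).$$
Adding $\varphi(y_{k+1})$ to both sides of the H1 inequality, bounding the term $\la\nabla g(y_{k}),y_{k+1}-y_{k}\ra+\varphi(y_{k+1})$ by the three-point inequality, and discarding the nonnegative term $L\beta_{d}(y_{k},y_{k+1})$ yields
$$\tilde{g}(y_{k+1})\le g(y_{k})+\la\nabla g(y_{k}),x-y_{k}\ra+\varphi(x)+2L\beta_{d}(y_{k},x)-2L\beta_{d}(y_{k+1},x).$$

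Next I would invoke H4, that is $\mu$-strong convexity of $g$ relative to $d$, which gives $\la\nabla g(y_{k}),x-y_{k}\ra\le g(x)-g(y_{k})-\mu\beta_{d}(y_{k},x)$. Inserting this bound and then setting $x=y^{*}$ produces the one-step recursion
$$\tilde{g}(y_{k+1})\le\tilde{g}(y^{*})+(2L-\mu)\beta_{d}(y_{k},y^{*})-2L\beta_{d}(y_{k+1},y^{*}),$$
which is exactly (\ref{eq:4.23}) with $M_{k}$ replaced by $2L$ and $g$ replaced by $\tilde{g}$. From $\tilde{g}(y_{k+1})-\tilde{g}(y^{*})\ge 0$ and $\mu\ge 0$ one deduces, as in the proof of Theorem \ref{thm:4.4}, that $\{\beta_{d}(y_{k},y^{*})\}_{k\ge 0}$ is nonincreasing, while monotonicity of $\{\tilde{g}(y_{k})\}_{k\ge 0}$ follows from Lemma \ref{lem:A7}.

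Finally, with $M=2L$, I would repeat the weighted-telescoping argument used in the proof of Theorem \ref{thm:4.4} (as in the proof of Theorem 3.1 in \cite{LU}): multiply the recursion by $\big(2L/(2L-\mu)\big)^{k}$, sum for $i=1,\dots,k$, use the monotonicity of $\{\tilde{g}(y_{i})\}$ and the nonnegativity of $\beta_{d}(y_{k},y^{*})$ to isolate $\tilde{g}(y_{k})-\tilde{g}(y^{*})$, and evaluate the resulting geometric sum to obtain the middle expression in (\ref{eq:A50}); the rightmost bound then follows from $(1+t)^{k}-1\ge kt$ with $t=\mu/(2L-\mu)\ge 0$, and the case $\mu=0$ is recovered in the limit $\mu\to 0^{+}$, where the weighted coefficients degenerate to $1/k$. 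I expect the only mildly delicate point to be the bookkeeping in the three-point step, so that the composite objective $\tilde{g}=g+\varphi$ (rather than $g$ alone) ends up on the left-hand side of the recursion; once that is arranged, the remainder is a verbatim transcription of the smooth case.
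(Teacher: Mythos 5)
Your proposal is correct and follows essentially the same route as the paper's proof: the H1 descent inequality combined with the Three-Point Property applied to $\phi(x)=\frac{1}{2L}\left[\la\nabla g(y_{k-1}),x-y_{k-1}\ra+\varphi(x)\right]$, then the relative strong convexity bound with $x=y^{*}$, and finally the same weighted telescoping as in Theorem A.4 with $M=2L$. The only cosmetic difference is your index shift and the phrasing about "discarding" the $\beta_{d}(y_{k},y_{k+1})$ term (the net coefficient is $-L$, so the dropped term is nonpositive), neither of which affects correctness.
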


\begin{proof}
By H1, for all $k\geq 1$, we have
\begin{equation}
g(y_{k})\leq g(y_{k-1})+\la\nabla g(y_{k-1}),y_{k}-y_{k-1}\ra +L\beta_{d}(y_{k-1},y_{k}).
\label{eq:A51}
\end{equation}
To obtain an upper bound for the inner product in (\ref{eq:A51}), let us apply Lemma A.3 with $h=d$ and 
\begin{equation*}
\phi(x)=\dfrac{1}{2L}\left[\la\nabla g(y_{k-1}),x-y_{k-1}\ra+\varphi(x)\right].
\end{equation*}
In this case, $y^{+}=y_{k}$ and, for $y=y_{k-1}$ we have
\begin{equation*}
\phi(x)+\beta_{d}(y_{k-1},x)\geq\phi(y_{k})+\beta_{d}(y_{k-1},y_{k})+\beta_{d}(y_{k},x),\quad\forall x\in\E,
\end{equation*}
that is,
\begin{eqnarray*}
\la\nabla g(y_{k-1}),x-y_{k-1}\ra+\varphi(x)+2L\beta_{d}(y_{k-1},x)&\geq &\la\nabla g(y_{k-1}),y_{k}-y_{k-1}\ra+\varphi(y_{k})\\
& & +2L\beta_{d}(y_{k-1},y_{k})+2L\beta_{d}(y_{k},x).
\end{eqnarray*}
This gives the upper bound
\begin{eqnarray}
\la\nabla g(y_{k-1}),y_{k}-y_{k-1}\ra &\leq &\la\nabla g(y_{k-1}),x-y_{k-1}\ra+\varphi(x)+2L\beta_{d}(y_{k-1},x)\nonumber\\
& &-\varphi(y_{k})-2L\beta_{d}(y_{k-1},y_{k})-2L\beta_{d}(y_{k},x).
\label{eq:A52}
\end{eqnarray}
Combining (\ref{eq:A51}) and (\ref{eq:A52}), we obtain
\begin{eqnarray*}
g(y_{k})&\leq & g(y_{k-1})+\la\nabla g(y_{k-1}),x-y_{k-1}\ra+\varphi(x)+2L\beta_{d}(y_{k-1},x)\\
        &     & -\varphi(y_{k})-2L\beta_{d}(y_{k-1},y_{k})-2L\beta_{d}(y_{k},x)+L\beta_{d}(y_{k-1},y_{k})
\end{eqnarray*}
\begin{equation}
\tilde{g}(y_{k})\leq g(y_{k-1})+\la\nabla g(y_{k-1}),x-y_{k-1}\ra+\varphi(x)+2L\beta_{d}(y_{k-1},x)-2L\beta_{d}(y_{k},x).
\label{eq:A53}
\end{equation}
Combining (\ref{eq:A53}) and (\ref{eq:4.22}), we get
\begin{eqnarray*}
\tilde{g}(y_{k})&\leq & g(y_{k-1})+g(x)-g(y_{k-1})-\mu\beta_{d}(y_{k-1},x)+\varphi(x)\\
                &     & +2L\beta_{d}(y_{k-1},x)-2L\beta_{d}(y_{k},x)\\
                &=&\tilde{g}(x)+(2L-\mu)\beta_{d}(y_{k-1},x)-2L\beta_{d}(y_{k},x).
\end{eqnarray*}
Substituting $x=y^{*}$, it follows that
\begin{equation*}
\tilde{g}(y_{k})\leq\tilde{g}(y^{*})+(M-\mu)\beta_{d}(y_{k-1},y^{*})-M\beta_{d}(y_{k},y^{*}),
\end{equation*}
where $M=2L$. Then, the rest of the proof follows exactly as in the proof of Theorem A.4 (from inequality (\ref{eq:4.26})).
\end{proof}

\begin{corollary}
\label{cor:A10}
Suppose that H1-H3 hold and let $\left\{y_{k}\right\}_{k\geq 0}$ be a sequence generated by Algorithm B. Additionally, assume that H4 holds with $\mu>0$ and let $u(y_{k})\in\partial\tilde{g}(y_{k})$ be defined in (\ref{eq:A45}) for $k\geq 1$. Given $\epsilon\in (0,1)$, if $\|u(y_{T+1})\|_{*}>\epsilon$, then
\begin{equation*}
T\leq\left[\log_{2}\left(\dfrac{2L}{2L-\mu}\right)\right]^{-1}\left[C+q\right]\log_{2}(\epsilon^{-1}),
\end{equation*}
where 
\begin{equation*}
C=\log_{2}\left(\dfrac{2q(2L)^{q-1}N\mu\beta_{d}(y_{0},y^{*})}{\sigma_{q}}\right).
\end{equation*}
\end{corollary}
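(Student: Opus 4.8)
The statement is the composite-minimization counterpart of Corollary~\ref{cor:4.6}, so the plan is to repeat that argument essentially verbatim, substituting the smooth objective $g(\,.\,)$ by $\tilde{g}(\,.\,)=g(\,.\,)+\varphi(\,.\,)$, the gradient $\nabla g(y_{k})$ by the subgradient $u(y_{k+1})\in\partial\tilde{g}(y_{k+1})$ supplied by Lemma~\ref{lem:A8}, and the step-size constant $\max\{2L_{0},4L\}$ of Algorithm~A by the fixed constant $2L$ used in Algorithm~B. In this correspondence, the per-iteration decrease~(\ref{eq:4.10}) of Theorem~\ref{thm:4.2} is replaced by inequality~(\ref{eq:A46}) of Lemma~\ref{lem:A8}, and the linear rate~(\ref{eq:4.18}) of Theorem~\ref{thm:4.4} is replaced by~(\ref{eq:A50}) of Theorem~\ref{thm:A9}. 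Note that H1--H4 are all in force, so both of these results apply.

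Concretely, I would first apply Lemma~\ref{lem:A8} with $k=T$ to obtain $u(y_{T+1})\in\partial\tilde{g}(y_{T+1})$ together with
\[
\tilde{g}(y_{T})-\tilde{g}(y_{T+1})\;\geq\;\frac{\sigma_{q}}{qL^{q-1}(3N)^{q}}\,\|u(y_{T+1})\|_{*}^{q}.
\]
Since $\tilde{g}(y_{T+1})\geq\tilde{g}(y^{*})$, the left-hand side is at most $\tilde{g}(y_{T})-\tilde{g}(y^{*})$, which by Theorem~\ref{thm:A9} is bounded by $\mu\beta_{d}(y_{0},y^{*})\big/\big[(1+\tfrac{\mu}{2L-\mu})^{T}-1\big]$. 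Combining these two bounds produces the analogue of~(\ref{eq:4.34}):
\[
\|u(y_{T+1})\|_{*}^{q}\;\leq\;\frac{qL^{q-1}(3N)^{q}\mu\beta_{d}(y_{0},y^{*})}{\sigma_{q}\big[(1+\tfrac{\mu}{2L-\mu})^{T}-1\big]}.
\]

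The remaining steps copy the end of the proof of Corollary~\ref{cor:4.6}. One may assume $(1+\tfrac{\mu}{2L-\mu})^{T}\geq 2$, i.e.\ $T\geq[\log_{2}(2L/(2L-\mu))]^{-1}$, since for smaller $T$ there is nothing to prove (exactly as in Corollary~\ref{cor:4.6}); then $(1+\tfrac{\mu}{2L-\mu})^{T}-1\geq\tfrac12(1+\tfrac{\mu}{2L-\mu})^{T}$, which is the analogue of~(\ref{eq:4.35}), and substituting this gives
\[
\|u(y_{T+1})\|_{*}^{q}\;\leq\;\frac{2qL^{q-1}(3N)^{q}\mu\beta_{d}(y_{0},y^{*})}{\sigma_{q}}\Big(\frac{2L-\mu}{2L}\Big)^{T}.
\]
Finally, I would invoke the hypothesis $\|u(y_{T+1})\|_{*}>\epsilon$, take base-$2$ logarithms, read off $T\leq[\log_{2}(2L/(2L-\mu))]^{-1}[C+q]\log_{2}(\epsilon^{-1})$ with $C$ the base-$2$ logarithm of the constant above, and use $\epsilon\in(0,1)$ to collapse the two logarithmic terms — precisely the manipulation in the closing lines of the proof of Corollary~\ref{cor:4.6}, with $M$ replaced by $2L$; the displayed form of $C$ in the statement follows the same convention used there.

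I do not expect any genuine obstacle: the proof is a mechanical transcription of that of Corollary~\ref{cor:4.6}, made possible precisely because Lemma~\ref{lem:A8} and Theorem~\ref{thm:A9} reproduce, in the composite setting and for the fixed step size $2L$, the two ingredients (gradient-type descent in the norm of a subgradient, and the relative-strong-convexity linear rate) on which that proof rests. The only mildly delicate points are the bookkeeping of constants through Lemma~\ref{lem:A8} (the factor $3N$ comes from the Lipschitz constants of both $\nabla d$ and $\nabla g$ on $\text{co}(\mathcal{L}(y_{0}))$) and the final logarithmic inversion, where one keeps track of the trivial regime $(1+\tfrac{\mu}{2L-\mu})^{T}<2$.
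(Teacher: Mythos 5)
Your proposal is correct and follows exactly the route the paper takes: its proof of Corollary A.10 is literally ``By Lemma A.8 and Theorem A.9, it follows as in the proof of Corollary A.6,'' which is precisely the transcription you carry out. The only wrinkle --- that your derived constant carries $L^{q-1}(3N)^{q}$ while the stated $C$ shows $(2L)^{q-1}N$ --- is an inconsistency already present in the paper's own Corollary A.6 (compare (\ref{eq:4.33}) with (\ref{eq:4.34})), and you correctly flag that you are following that same convention.
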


\begin{proof}
By Lemma \ref{lem:A8} and Theorem \ref{thm:A9}, it follows as in the proof of Corollary A.6.
\end{proof}


\begin{thebibliography}{10}

\bibitem{BAES} M. Baes: Estimate Sequence Methods: Extensions and Approximations. Optimization Online (2009)

\bibitem{BBT} H.H. Bauschke, J. Bolte, M. Teboulle:  A descent lemma beyond Lipschitz gradient continuity: first order methods revisited and applications. Math. Oper. Res. \textbf{42}, 330--348 (2016)

\bibitem{Birgin} E. G. Birgin, J. L. Gardenghi, J. M. Mart\'inez, S. A. Santos, and Ph. L. Toint: Worst-case evaluation complexity for unconstrained nonlinear optimization using high-order regularized models. Mathematical Programming \textbf{163}, 359-368 (2017).

\bibitem{BBT2} J. Bolte, S. Sabach, M. Teboulle, Y. Vaisbourd: First order methods beyond convexity and Lipschitz Gradient Continuity with Applications to Quadratic Inverse Problems. SIAM Journal on Optimization \textbf{28}, 2131--2151 (2018)

\bibitem{CGT2}
Cartis, C., Gould, N.I.M., and Toint, Ph.L.: Universal
regularized methods - varying the power, the smoothness,
and the accuracy. SIAM Journal on Optimization \textbf{29}, 595--615 (2019).

\bibitem{CHEN} Chen, G., Teboulle, M.: Convergence analysis of a proximal-like minimization algorithm using Bregman functions. SIAM Journal on Optimization \textbf{3}, 538--543 (1993)

\bibitem{DOI} Doikov, N., Nesterov, Yu.: Minimizing Uniformly Convex Functions by Cubic Regularization
of Newton Method. arXiv: 1905.02671 [math. OC] (2019)

\bibitem{GN} Grapiglia, G.N., Nesterov, Yu.: Regularized Newton Methods for minimizing functions with H\"{o}lder continuous Hessians. SIAM Journal on Optimization \textbf{27}, 478-506 (2017)

\bibitem{GN2} Grapiglia, G.N., Nesterov, Yu.: Accelerated Regularized Newton Methods for Minimizing Composite Convex Functions. SIAM Journal on Optimization \textbf{29}, 77-99 (2019)

\bibitem{GN3} Grapiglia, G.N, Nesterov, Yu.: Tensor Methods for Minimizing Functions with H\"{o}lder Continuous
Higher-Order Derivatives. arXiv:1904.12559 [math. OC] (2019)

\bibitem{GN4} Grapiglia, G.N., Nesterov, Yu.: Tensor Methods for Finding Approximate Stationary Points of Convex Functions. arXiv: 1907.07053 [math. OC] (2019)

\bibitem{JIA} Jiang, B., Lin, T., Zhang, S.: A Unified Adaptive Tensor Approximation Scheme to Accelerated Composite Convex Optimization. arXiv: 1811.02427 [math O.C.] (2018)

\bibitem{LAN} Lan, G., Lu, Z., Monteiro, R.D.C.: Primal-dual first-order methods with $\mathcal{O}(1/\epsilon)$ iteration-complexity for cone programming. Mathematical Programming \textbf{126}, 1--29 (2011)

\bibitem{LU} Lu, H., Freund, R.M., Nesterov, Yu.: Relatively Smooth Convex Optimization by First-Order Methods, and Applications. SIAM Journal on Optimization \textbf{28}, 333--354 (2018)

\bibitem{Mart}
Mart\'inez, J.M.: On high-order model regularization for
constrained optimization. SIAM Journal on Optimization \textbf{27}, 2447--2458 (2017)


\bibitem{NES7} Nesterov, Yu., Nemirovskii, A.: Interior point polynomial methods in convex programming: Theory and Applications, SIAM, Philadelphia (1994)

\bibitem{NP} Nesterov, Yu, Polyak, B.T.: Cubic regularization of Newton method and its global performance. Mathematical Programming \textbf{108}, 177-205 (2006)

\bibitem{NES3} Nesterov, Yu.: Accelerating the cubic regularization of Newton's method on convex problems. Mathematical Programming \textbf{112}, 159-181 (2008)


\bibitem{NES6} Nesterov, Yu.: Implementable tensor methods in unconstrained convex optimization. CORE Discussion Paper 2018/05

%\bibitem{KARAS} Ribeiro, A.A., Karas, E.W.: Otimiza\c c\~ao Cont\'inua: aspectos te\'oricos e computacionais. Cengage Learning, 2013. In Portuguese.

\bibitem{TS} Tseng, P.: On accelerated proximal gradient methods for convex-concave optimization. Tech.
report, May 21, 2008.


\end{thebibliography}
\end{document}